\newcommand{\ZZ}{\mathbb{Z}}
\newcommand{\CC}{\mathbb{C}}
\newcommand{\RR}{\mathbb{R}}
\newcommand{\TT}{\mathbb{T}}
\newcommand{\FF}{\mathcal{F}}
\newcommand{\SLASH}{\char`\\}
\newcommand{\spacedash}{\hspace{.06cm} - \hspace{.06cm}}
\newcommand{\NN}{\mathbb{N}}
\newcommand{\SSS}{\mathbb{S}}
\newcommand{\defeq}{\vcentcolon=}
\newcommand{\Fin}{\mathcal{F}in}
\DeclareMathOperator*{\hocoeq}{hocoeq}
\DeclareMathOperator*{\colim}{colim}
\DeclareMathOperator{\Res}{Res}
\DeclareMathOperator{\Ind}{Ind}
\newtheorem{thm}[equation]{Theorem}
\newtheorem{lem}[equation]{Lemma}
\newtheorem{prop}[equation]{Proposition}
\newtheorem{cor}[equation]{Corollary}
\theoremstyle{definition} \newtheorem{definition}[equation]{Definition}
\numberwithin{equation}{section}
\newlength\tindent
\renewcommand{\indent}{\hspace*{\tindent}}
\title{Tambara Functors and Commutative Ring Spectra}
\author{John Ullman}
\begin{document}

\begin{abstract}
It is well known that the zeroth stable homotopy group of a genuine equivariant commutative ring spectrum has multiplicative transfers (norms), making it into a Tambara functor. We prove here that all Tambara functors can be obtained in this way. In fact, we prove that the homotopy category of Eilenberg MacLane commutative ring spectra is equivalent to the category of Tambara functors. Several algebraic results on Tambara functors are derived as corollaries. Finally, we rule out the existence of a lax symmetric monoidal construction for Eilenberg MacLane spectra when the group is nontrivial.
\end{abstract}

\maketitle

\section{Introduction and Statements of Results}\label{intro}

Let $G$ be a finite group, and let $X$ be a genuine $G$-equivariant homotopy commutative ring spectrum. Then the Mackey functor $\underline{\pi}_0 X$ is a commutative Green functor. If $X$ is in fact an $E_{\infty}$ ring spectrum, then $\underline{\pi}_0 X$ can be given multiplicative transfer (\emph{norm}) maps, making it into a \emph{Tambara functor} (see~\cite{Brun} or~\cite{Stri}). It is a natural question to ask whether all Tambara functors arise in this way. In this paper, we answer this question in the affirmative.\\
\indent We prove our main theorems in Section~\ref{sec:main}. We paraphrase them here as follows.

\begin{thm}\label{thm:alloccurpara}
[\ref{thm:getanything}] Every Tambara functor occurs as $\underline{\pi}_0$ of a commutative ring spectrum.
\end{thm}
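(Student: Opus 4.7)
The plan is to realize an arbitrary Tambara functor $\underline{R}$ by lifting an algebraic presentation of $\underline{R}$ to commutative $G$-ring spectra. For a finite $G$-set $T$, let $\mathbb{P}(\Sigma^\infty_+ T)$ denote the free (genuine equivariant) commutative $G$-ring spectrum on $\Sigma^\infty_+ T$, whose symmetric powers incorporate Hill--Hopkins--Ravenel norms. The first key input I need is that $\underline{\pi}_0 \mathbb{P}(\Sigma^\infty_+ T)$ is naturally the free Tambara functor on $T$; this should follow from a direct comparison between Tambara's polynomial construction of the free Tambara functor and the genuine equivariant $\pi_0$ of equivariant symmetric powers of $\Sigma^\infty_+ T$. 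The second input is the formal observation that every Tambara functor $\underline{R}$ admits a reflexive coequalizer presentation by free Tambara functors on $G$-sets, obtained by choosing first generators and then relations, in the standard way available for any algebraic theory.

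Given such a presentation $\mathbb{T}(S) \rightrightarrows \mathbb{T}(T) \to \underline{R}$, where I write $\mathbb{T}(-)$ for the free Tambara functor on a $G$-set, I would lift both parallel arrows through the identification $\mathbb{T}(T) \cong \underline{\pi}_0 \mathbb{P}(\Sigma^\infty_+ T)$ to a parallel pair of maps of commutative $G$-ring spectra, and then set $X$ to be their homotopy coequalizer in commutative $G$-ring spectra. The expected conclusion is an isomorphism $\underline{\pi}_0 X \cong \underline{R}$, which realizes $\underline{R}$ as $\underline{\pi}_0$ of the commutative ring spectrum $X$, as required. For the stronger Eilenberg--MacLane refinement announced in the introduction, one would further apply a zeroth Postnikov truncation in commutative $G$-ring spectra, using that such truncations exist and preserve the commutative ring structure.

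The main obstacle is showing that $\underline{\pi}_0$ preserves the reflexive coequalizer
\[
\mathbb{P}(\Sigma^\infty_+ S) \rightrightarrows \mathbb{P}(\Sigma^\infty_+ T) \to X.
\]
Non-equivariantly, the analogous fact is that $\pi_0$ of the realization of a simplicial connective $E_\infty$-ring spectrum computes the coequalizer of $\pi_0$ of the face maps out of simplicial degree one, which is proved via the bar resolution together with connectivity estimates for extended powers. Adapting this to the equivariant setting requires tracking how $\underline{\pi}_0$ interacts with equivariant symmetric products of connective $G$-spectra: the presence of Hill--Hopkins--Ravenel norm maps makes the spectral-sequence or filtration analysis more delicate, since norms mix information across different subgroups and across different underlying homotopy degrees. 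This compatibility is where the bulk of the technical work lives; once it is settled, the theorem follows formally from the two inputs above.
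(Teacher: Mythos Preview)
Your strategy is the paper's strategy: present $\underline{R}$ by free Tambara functors on finite $G$-sets, invoke the identification of these with $\underline{\pi}_0$ of free commutative ring spectra on those $G$-sets (this is exactly Theorem~\ref{thm:gentamb}, the technical heart of the paper), lift the presentation, and take the colimit in $comm_G$. The one substantive difference is the shape of the colimit and how its interaction with $\underline{\pi}_0$ is controlled.

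The paper uses a \emph{pushout} rather than a reflexive coequalizer: one cones off generators of the kernel by pushing out along $\CC(F_1S^1\wedge G/K_+)\to\CC(F_1S^1\wedge G/K_+\wedge I)$. Pushouts in $comm_G$ are relative smash products, and Lemma~\ref{lem:commcofflat} shows these are \emph{derived} smash products under a cofibrancy hypothesis; for connective inputs one then has $\underline{\pi}_0(X\wedge_Z Y)\cong\underline{\pi}_0 X\otimes_{\underline{\pi}_0 Z}\underline{\pi}_0 Y$ at the Mackey level. This replaces the bar-resolution/spectral-sequence argument you anticipate with a one-line flatness statement. Your worry that ``norms mix information across subgroups and degrees'' is not where the difficulty lies: Corollary~\ref{cor:weaktambpushout} shows that once the underlying commutative Green functor of the colimit is correct and the structure maps are Tambara maps, the Tambara pushout property is automatic by an induction on subgroup order using the distributive law. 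So the compatibility of $\underline{\pi}_0$ with the colimit is a purely additive question, and the pushout formulation makes that question easy. Your coequalizer version would work too, but you would either unwind it into the same pushout or do more work than necessary.
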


The strategy of proof is a straightforward, generators and relations approach. Let $\underline{R}$ be an arbitrary Tambara functor. We can express $\underline{R}$ as a pushout as below, where we use $\TT ([\spacedash, G/H])$ to denote the free Tambara functor on the represented Mackey functor $[\spacedash, G/H]$.
\begin{align*}
\xymatrix{
 \displaystyle \coprod_{K \subseteq G, y \in ker(f)(G/K)} \textstyle \TT ([\spacedash, G/K]) \ar[d]_-{\coprod_{K,y} 0_*} \ar[rr]^-{\coprod_{K,y} y_*} && \displaystyle \coprod_{H \subseteq G, x \in \underline{R} (G/H)} \textstyle \TT([\spacedash, G/H]) \ar[d]^-{f = \coprod_{H,x} x_*} \\
 \underline{A} \ar[rr] && \underline{R} }
\end{align*}

Here, $\underline{A}$ is the Burnside Mackey functor, which happens to be the initial Tambara functor. It is $\underline{\pi}_0$ of the sphere spectrum. The pushout can be realized by a homotopy pushout of commutative ring spectra; this is Proposition~\ref{prop:commpushout}. The coproducts can be realized by coproducts of commutative ring spectra; this is Proposition~\ref{prop:commcoprod}. The technical heart of this paper is the proof of the following fact: free Tambara functors on represented Mackey functors can be realized by free commutative ring spectra on suspension spectra of finite $G$-sets. This is Theorem~\ref{thm:gentamb}, whose proof occupies all of Section~\ref{sec:commgen}.\\
\indent Next, using these generators and zeroth Postnikov sections $Post^0$, we prove the following.

\begin{thm}\label{thm:homhocommpara}
[\ref{thm:homhocomm}] If $X$ and $H\underline{R}$ are commutative ring spectra and $X$ is $(-1)$-connected, then maps $X \to H\underline{R}$ in the homotopy category of commutative ring spectra correspond bijectively to maps $\underline{\pi}_0 X \to \underline{R}$ of Tambara functors.
\end{thm}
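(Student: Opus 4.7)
The forward map is $f \mapsto \underline{\pi}_0 f$. To construct the inverse, I would combine the pushout presentation of $\underline{\pi}_0 X$ appearing in the proof of Theorem~\ref{thm:getanything} with a Postnikov section argument inside commutative ring spectra.

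For any Tambara functor $\underline{R}'$, realize the corresponding generators-and-relations presentation (the pushout square displayed before Theorem~\ref{thm:getanything}) as a homotopy pushout of commutative ring spectra, using Propositions~\ref{prop:commpushout} and~\ref{prop:commcoprod} together with Theorem~\ref{thm:gentamb}. This produces a $(-1)$-connected commutative ring spectrum $X(\underline{R}')$ with $\underline{\pi}_0 X(\underline{R}') = \underline{R}'$. Writing $F$ for the free commutative ring spectrum functor, Theorem~\ref{thm:gentamb} and the free-forgetful adjunction identify $[F(\Sigma^{\infty}_+ G/H), Y]_{CAlg} = \underline{\pi}_0 Y(G/H)$ for any commutative ring spectrum $Y$. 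The homotopy pushout therefore converts $[X(\underline{R}'), Y]_{CAlg}$ into an equalizer whose two parallel maps, induced by $0_*$ and $y_*$, impose precisely the conditions that a tuple of elements of $\underline{\pi}_0 Y$ define a Tambara morphism $\underline{R}' \to \underline{\pi}_0 Y$. This yields a natural isomorphism
$$[X(\underline{R}'), Y]_{CAlg} \cong \mathrm{Hom}_{\mathrm{Tamb}}(\underline{R}', \underline{\pi}_0 Y).$$

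I would then handle a general $(-1)$-connected $X$ by applying the above with $\underline{R}' = \underline{\pi}_0 X$. Taking $Y = X$, the identity on $\underline{\pi}_0 X$ yields a canonical morphism $\varphi \colon X(\underline{\pi}_0 X) \to X$ in the homotopy category of commutative ring spectra inducing the identity on $\underline{\pi}_0$. Using the zeroth Postnikov section $Post^0$ in commutative ring spectra, maps into the $0$-truncated target $H\underline{R}$ factor uniquely through $Post^0$; and since $\varphi$ is an isomorphism on $\underline{\pi}_0$, the induced $Post^0 \varphi$ is an equivalence. Hence $\varphi^* \colon [X, H\underline{R}]_{CAlg} \to [X(\underline{\pi}_0 X), H\underline{R}]_{CAlg}$ is a bijection, which together with the natural isomorphism above (for $Y = H\underline{R}$) gives the claimed bijection with $\mathrm{Hom}_{\mathrm{Tamb}}(\underline{\pi}_0 X, \underline{R})$.

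The main technical obstacle lies in the Postnikov step: one must set up $Post^0$ inside the category of commutative ring spectra, verify that it preserves $\underline{\pi}_0$-isomorphisms, and check that it is left adjoint to the inclusion of $0$-truncated commutative ring spectra. Killing higher homotopy groups while retaining the commutative ring structure relies crucially on Theorem~\ref{thm:gentamb}, which provides cells built from free commutative ring spectra on suspension spectra of finite $G$-sets; once this Postnikov machinery is in place, the pushout-equalizer calculation of the first paragraph is essentially formal.
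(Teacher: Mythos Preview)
Your proposal is correct and follows essentially the same route as the paper: build an explicit $(-1)$-connected commutative ring spectrum from the generators-and-relations pushout (your $X(\underline{\pi}_0 X)$ is the paper's $EM_2(\underline{\pi}_0 X)$), produce a comparison map to $X$ by adjunction, then use $Post^0$ to transfer the mapping-out calculation from $X$ to the explicit model, where it reduces to the free/forgetful adjunctions. One small correction to your last paragraph: the construction of $Post^0$ in $comm_G$ (Proposition~\ref{prop:commpost0}) does \emph{not} rely on Theorem~\ref{thm:gentamb}. One kills classes in $\underline{\pi}_n$ for $n \geq 1$ by attaching cells $\CC(F_1 S^1 \wedge G/H_+ \wedge S^n) \to \CC(F_1 S^1 \wedge G/H_+ \wedge D^{n+1})$, and checking that this does not disturb $\underline{\pi}_0$ uses only the pushout analysis (Proposition~\ref{prop:commpushout}) together with connectivity; Theorem~\ref{thm:gentamb} concerns the degree-zero case and is needed only for the identification of $\underline{\pi}_0$ of the generators in your pushout model, not for the Postnikov machinery.
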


This is precisely analogous to the situation with spectra and Mackey functors. Combining the above two theorems, we obtain the following.

\begin{thm}\label{thm:emcommpara}
[\ref{thm:hoemcomm}] The homotopy category of Eilenberg MacLane commutative ring spectra is equivalent to the category of Tambara functors.
\end{thm}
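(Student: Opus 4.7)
The plan is to verify that the functor $\underline{\pi}_0$, from the homotopy category of Eilenberg MacLane commutative ring spectra to the category of Tambara functors, is an equivalence; this amounts to checking essential surjectivity and fully faithfulness, both of which follow essentially formally from Theorems~\ref{thm:alloccurpara} and~\ref{thm:homhocommpara}.

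For essential surjectivity, I start with an arbitrary Tambara functor $\underline{R}$. Theorem~\ref{thm:alloccurpara} produces a commutative ring spectrum $X$ with $\underline{\pi}_0 X \cong \underline{R}$. Applying the zeroth Postnikov section $Post^0$ in the category of commutative ring spectra (which, as signaled by the paragraph preceding Theorem~\ref{thm:homhocommpara}, is available in this setting) yields an Eilenberg MacLane commutative ring spectrum $H\underline{R} \defeq Post^0 X$ with $\underline{\pi}_0 H\underline{R} \cong \underline{R}$, as required.

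For fully faithfulness, observe that any Eilenberg MacLane commutative ring spectrum $H\underline{R}$ is $(-1)$-connected, since its only nonzero homotopy Mackey functor sits in degree zero. Hence, given any pair $H\underline{R}$ and $H\underline{R'}$ of Eilenberg MacLane commutative ring spectra, Theorem~\ref{thm:homhocommpara} directly supplies a bijection between maps $H\underline{R} \to H\underline{R'}$ in the homotopy category of commutative ring spectra and maps $\underline{R} \to \underline{R'}$ of Tambara functors. Because this bijection is induced by $\underline{\pi}_0$ itself, it is automatically compatible with identities and composition, so it assembles into an equivalence of categories.

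The principal obstacle, if any, is bookkeeping: one must confirm that the bijection supplied by Theorem~\ref{thm:homhocommpara} really is the one induced by applying the functor $\underline{\pi}_0$, rather than some other bijection constructed in its proof. This compatibility should be transparent from the construction used there, so modulo this verification the present result is a formal corollary of the two preceding theorems.
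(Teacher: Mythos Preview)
Your proposal is correct and matches the paper's approach: the paper simply states that Theorem~\ref{thm:hoemcomm} follows by ``combining the above two theorems,'' and you have filled in exactly those details. Two minor remarks: the full statement of Theorem~\ref{thm:getanything} already produces Eilenberg MacLane values (via the functor $EM$), so your $Post^0$ step is redundant though harmless; and your bookkeeping worry is unnecessary, since Theorem~\ref{thm:homhocomm} explicitly asserts that the bijection is $(\underline{\pi}_0)_*$.
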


Several algebraic corollaries are derived from these theorems. We paraphrase the two most important ones here. In the following, we use $\CC$ to denote the derived free commutative ring spectrum functor, and $N_H^G$ to denote the derived norm functor of Hill-Hopkins-Ravenel (\cite{HHR}).

\begin{cor}\label{cor:freetambarapara}
[\ref{cor:existfreetambara}] If $\underline{M}$ is a Mackey functor then $\underline{\pi}_0 \CC (H\underline{M})$ is the free Tambara functor on $\underline{M}$.
\end{cor}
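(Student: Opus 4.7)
The plan is to prove this by a Yoneda/representability argument, showing that both $\underline{\pi}_0 \CC (H\underline{M})$ and the free Tambara functor on $\underline{M}$ represent the same functor on the category of Tambara functors.

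Fix a Tambara functor $\underline{R}$. I would compute $\text{Hom}_{\text{Tamb}}(\underline{\pi}_0 \CC (H\underline{M}), \underline{R})$ by assembling a chain of natural bijections:
\begin{align*}
\text{Hom}_{\text{Tamb}}(\underline{\pi}_0 \CC (H\underline{M}), \underline{R}) & \cong [\CC (H\underline{M}), H\underline{R}]_{\text{Comm}} \\
& \cong [H\underline{M}, H\underline{R}]_{\text{Sp}} \\
& \cong \text{Hom}_{\text{Mackey}}(\underline{M}, \underline{R}).
\end{align*}
The first bijection is Theorem~\ref{thm:homhocommpara}, applied with $X = \CC(H\underline{M})$; this requires knowing that $\CC(H\underline{M})$ is $(-1)$-connected. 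The second bijection is the derived adjunction between the free commutative ring spectrum functor $\CC$ and the forgetful functor from commutative ring spectra to spectra. The third bijection is the standard identification of maps between Eilenberg MacLane spectra with Mackey functor maps. Each step is natural in $\underline{R}$, so by Yoneda, $\underline{\pi}_0 \CC (H\underline{M})$ is the free Tambara functor on $\underline{M}$.

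The main technical point to verify is the $(-1)$-connectivity of $\CC (H\underline{M})$ needed to invoke Theorem~\ref{thm:homhocommpara}. Since $\CC (X)$ is built as a wedge of (equivariant) symmetric/extended powers of $X$ involving norms, smash products, and homotopy orbits by symmetric groups, and each of these operations preserves $(-1)$-connectivity (smash products of connective genuine equivariant spectra are connective, and homotopy orbits by finite groups preserve connectivity), the fact that $H\underline{M}$ is $(-1)$-connected forces $\CC (H\underline{M})$ to be $(-1)$-connected as well.

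Assuming that connectivity check, the only remaining obstacle is checking naturality of the three bijections in $\underline{R}$, which is routine from the naturality of each ingredient: Theorem~\ref{thm:homhocommpara} is natural in the target commutative ring spectrum, the $\CC \dashv U$ adjunction is natural, and the $H \dashv \underline{\pi}_0$ correspondence on Eilenberg MacLane objects is natural. Combining these yields the universal property of the free Tambara functor on $\underline{M}$, which by uniqueness of left adjoints identifies $\underline{\pi}_0 \CC (H\underline{M})$ with $\TT (\underline{M})$.
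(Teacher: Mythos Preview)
Your proposal is correct and is essentially identical to the paper's own proof: the paper establishes the same chain of natural bijections (written in the reverse order)
\[
Hom_{Mack(G)}(\underline{M},\underline{R}) \cong Hom_{Ho(Sp_G)}(H\underline{M},H\underline{R}) \cong Hom_{Ho(comm_G)}(\CC(H\underline{M}),H\underline{R}) \cong Hom_{Tamb(G)}(\underline{\pi}_0 \CC(H\underline{M}),\underline{R}),
\]
invoking Theorem~\ref{thm:homhocomm} for the last step and noting beforehand that $\CC$ applied to a $(-1)$-connected spectrum is $(-1)$-connected. Your added remarks on connectivity and naturality are accurate elaborations of points the paper states in passing.
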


\begin{cor}\label{cor:leftadjointnormpara}
[\ref{cor:leftadjointderivednorm}] Let $H$ be a subgroup of $G$. The left adjoint of restriction from $G$-Tambara functors to $H$-Tambara functors coincides with $N_H^G$ on underlying commutative Green functors.
\end{cor}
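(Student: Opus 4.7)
The plan is to transfer the Hill--Hopkins--Ravenel norm-restriction adjunction on commutative ring spectra to Tambara functors via the equivalence of categories in Theorem~\ref{thm:hoemcomm}. Recall that, by construction, the derived norm $N_H^G$ is left adjoint to underlying restriction on the homotopy category of $G$-equivariant commutative ring spectra, so the task reduces to translating this adjunction under the equivalence.

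First I would verify that for any $H$-Tambara functor $\underline{S}$, the $G$-commutative ring spectrum $N_H^G H\underline{S}$ is $(-1)$-connected; this is standard, since $H\underline{S}$ is $(-1)$-connected and $N_H^G$ is built from indexed smash products of connective cofibrant inputs. Theorem~\ref{thm:homhocomm} therefore applies. For an arbitrary $G$-Tambara functor $\underline{R}$, I would then chain together the following natural bijections:
\begin{align*}
\text{Hom}_{G\text{-Tamb}}\bigl(\underline{\pi}_0 N_H^G H\underline{S},\, \underline{R}\bigr)
 &\cong [N_H^G H\underline{S},\, H\underline{R}]_{G\text{-comm}} \\
 &\cong [H\underline{S},\, \Res_H^G H\underline{R}]_{H\text{-comm}} \\
 &\cong [H\underline{S},\, H\Res_H^G \underline{R}]_{H\text{-comm}} \\
 &\cong \text{Hom}_{H\text{-Tamb}}\bigl(\underline{S},\, \Res_H^G \underline{R}\bigr).
\end{align*}
The first and last isomorphisms are Theorem~\ref{thm:homhocomm}; the second is the spectral norm-restriction adjunction; and the third is the elementary observation that underlying restriction carries a $G$-Eilenberg--MacLane spectrum to the $H$-Eilenberg--MacLane spectrum of the restricted (as a Mackey functor) Tambara functor. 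Naturality in $\underline{R}$ then exhibits $\underline{\pi}_0 N_H^G H(-)$ as a left adjoint to restriction on Tambara functors. In particular, this identification holds as Tambara functors, and so a fortiori on underlying commutative Green functors, which is the content of the corollary.

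The main obstacle is the first step: pinning down the $(-1)$-connectivity of $N_H^G H\underline{S}$ in the homotopy category of commutative ring spectra. The derived norm is defined via cofibrant replacement and indexed smash products, and while it is standard that it preserves connectivity on connective inputs, some care is needed to phrase this correctly at the derived level. Once that is in hand, the rest of the argument is essentially formal, flowing from Theorem~\ref{thm:hoemcomm} and Theorem~\ref{thm:homhocomm}.
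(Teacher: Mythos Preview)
Your argument correctly establishes what the paper calls Corollary~\ref{cor:normleftadjoint}: that $\underline{R} \mapsto \underline{\pi}_0 N_H^G(H\underline{R})$, with $N_H^G$ the norm on \emph{commutative ring spectra} and $H\underline{R}$ cofibrant in $comm_H$, is left adjoint to restriction on Tambara functors. The paper proves that corollary in exactly the way you describe.

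However, this is not yet the content of Corollary~\ref{cor:leftadjointderivednorm}. In that statement, $N_H^G$ refers to the derived norm on \emph{Mackey functors}, defined just before Lemma~\ref{lem:normpi0} by $\underline{M} \mapsto \underline{\pi}_0 N_H^G(H\underline{M})$ where $H\underline{M}$ is taken cofibrant as a \emph{spectrum} and $N_H^G$ is the spectrum-level derived norm. The point of the corollary is that the Tambara-functor left adjoint, after forgetting to commutative Green functors, agrees with this a priori different construction. Your final sentence (``a fortiori on underlying commutative Green functors'') elides exactly this comparison.

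Two genuine steps remain. First, a cofibrant object of $comm_H$ is generally not cofibrant as a spectrum, so one must check that the commutative-ring-spectrum norm $N_H^G X$ computes the derived spectrum-level norm of the underlying spectrum; this is Lemma~\ref{lem:veryflat} (Proposition~B.63 of \cite{HHR}). Second, one must know that $\underline{\pi}_0 N_H^G$ of a $(-1)$-connected cofibrant spectrum depends only on its $\underline{\pi}_0$, so that the Mackey-functor norm is well defined and matches the value on any Eilenberg--MacLane model; this is Lemma~\ref{lem:normpi0}. With both lemmas in hand the identification follows; without them, your argument stops at Corollary~\ref{cor:normleftadjoint}.
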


In a subsequent paper we shall give detailed algebraic descriptions of the symmetric power and norm constructions on Mackey functors, and shall give an algebraic demonstration of the adjunction in Corollary~\ref{cor:leftadjointnormpara}.\\
\indent We now have the following dictionary for Eilenberg MacLane (EM) spectra.
\begin{align*}
	\text{EM spectra} &\leftrightarrow \text{Mackey functors} \\
	\text{EM (homotopy) rings} &\leftrightarrow \text{Green functors} \\
	\text{EM homotopy commutative rings} &\leftrightarrow \text{Commutative Green functors} \\
	\text{EM commutative rings} &\leftrightarrow \text{Tambara functors}
\end{align*}

The second and third lines use the first line, and the fact that
\begin{align*}
	\underline{\pi}_0 (X \wedge Y) \cong \underline{\pi}_0 X \otimes \underline{\pi}_0 Y
\end{align*}

when $X$ and $Y$ are $(-1)$-connected and $X \wedge Y$ is a derived smash product. The case of associative ring spectra can be handled using a similar approach to the one in this paper, but is vastly easier.\\
\indent In Section~\ref{sec:tambara} we give a definition of Tambara functors and give a direct construction of free Tambara functors on represented Mackey functors. We also give some preliminaries on colimits of Tambara functors. In Section~\ref{sec:commspectra} we prove the aforementioned facts about coproducts and homotopy pushouts of equivariant commutative ring spectra (Propositions~\ref{prop:commcoprod} and~\ref{prop:commpushout}, resp.). Section~\ref{sec:commgen} is devoted to analyzing the free commutative ring spectrum on a finite $G$-set. We state and prove our main theorems in Section~\ref{sec:main}. Finally, in Section~\ref{sec:cgreen} we investigate the extent to which Tambara functors differ from commutative Green functors. In particular, we show that there can be no lax symmetric monoidal construction of Eilenberg MacLane spectra when $G$ is nontrivial, a somewhat disappointing situation.\\
\indent Throughout this paper we use $Mack(G)$ to denote the category of Mackey functors over $G$, and $sMack(G)$ to denote the category of semi-Mackey functors (that is, Mackey functors without additive inverses). We also denote the category of commutative Green functors over $G$ by $Comm(G)$. We index our spectra on a complete $G$-universe.

\section{Tambara and Semi-Tambara Functors}\label{sec:tambara}

We begin by giving a definition of (semi-)Tambara functors, after some preliminaries. Let $G$ be a finite group, and let $\Fin_G$ denote the category of finite $G$-sets. Also let $Set_{\neq \emptyset}$ denote the category of nonempty sets. Let $i : X \to Y$ and $j : Y \to Z$ be maps in $\Fin_G$. Let
\begin{align*}
	\textstyle \prod_{i,j} X \defeq \{ (z, s) : z \in Z, s : j^{-1} (z) \to X, i \circ s = Id \}
\end{align*}

be the set of sections of $i$ defined on fibers of $j$, with $G$ acting by conjugation. There is an obvious $G$-map
\begin{align*}
	p : \textstyle \prod_{i,j} X &\to Z \\
	(z, s) &\mapsto z
\end{align*}

as well as an evaluation $G$-map as below.
\begin{align*}
	e : Y \times_Z \textstyle \prod_{i,j} X &\to X \\
	(y, (z, s)) &\mapsto s(y)
\end{align*}

Observe that the diagram below commutes.
\begin{align*}
\xymatrix{
 Y \times_Z \textstyle \prod_{i,j} X \ar[d]_-{e} \ar[rr]^-{\pi_2} & & \textstyle \prod_{i,j} X \ar[d]^-{p} \\
 X \ar[r]_-{i} & Y \ar[r]_-{j} & Z }
\end{align*}

An \emph{exponential diagram} is any diagram in $\Fin_G$ which is isomorphic to one of the above form. If the diagram
\begin{align*}
\xymatrix{
 A \ar[d]_-{f} \ar[rr]^-{g} & & B \ar[d]^-{h} \\
 X \ar[r]_-{i} & Y \ar[r]_-{j} & Z }
\end{align*}

is an exponential diagram we will say that $(f,g,h)$ is a \emph{distributor} for $(i,j)$. We can now define semi-Tambara functors. Our definition is equivalent to that of Tambara's "semi-TNR functors" in~\cite{Tambara}.

\begin{definition}\label{def:tambara}
A \emph{semi-Tambara functor} $\underline{M}$ is a triplet of functors
\begin{align*}
	\underline{M}^* : \Fin_G^{op} &\to Set_{\neq \emptyset} \\
	\underline{M}_* : \Fin_G &\to Set_{\neq \emptyset} \\
	\underline{M}_{\star} : \Fin_G &\to Set_{\neq \emptyset}
\end{align*}
with common object assignment $X \mapsto \underline{M} (X)$ such that
\begin{enumerate}[(i)]
\item if $X \xrightarrow{i} Z \xleftarrow{j} Y$ is a coproduct in $\Fin_G$ then
\begin{align*}
	\underline{M} (X) \xleftarrow{\underline{M}^* (i)} \underline{M} (Z) \xrightarrow{\underline{M}^* (j)} \underline{M} (Y)
\end{align*}
is a product in $Set_{\neq \emptyset}$,
\item for any pullback diagram
\begin{align*}
\xymatrix{
 P \ar[d]_-{p} \ar[r]^-{q} & Y \ar[d]^-{i} \\
 X \ar[r]_-{j} & Z }
\end{align*}
we have the two relations $\underline{M}^* (j) \circ \underline{M}_* (i) = \underline{M}_* (p) \circ \underline{M}^* (q)$ and $\underline{M}^* (j) \circ \underline{M}_{\star} (i) = \underline{M}_{\star} (p) \circ \underline{M}^* (q)$, and
\item for any exponential diagram
\begin{align*}
\xymatrix{
 A \ar[d]_-{f} \ar[rr]^-{g} & & B \ar[d]^-{h} \\
 X \ar[r]_-{i} & Y \ar[r]_-{j} & Z }
\end{align*}
we have $\underline{M}_{\star} (j) \circ \underline{M}_* (i) = \underline{M}_* (h) \circ \underline{M}_{\star} (g) \circ \underline{M}^* f$.
\end{enumerate}
A map of semi-Tambara functors $\underline{M} \to \underline{N}$ is a collection of maps of sets $\underline{M} (X) \to \underline{N} (X)$ which forms a triplet of natural transformations $\underline{M}^* \to \underline{N}^*$, $\underline{M}_* \to \underline{N}_*$, $\underline{M}_{\star} \to \underline{N}_{\star}$. We denote the category of semi-Tambara functors by $sTamb(G)$.
\end{definition}

The third condition above is called the \emph{distributive law}. If $\underline{M}$ is a semi-Tambara functor the structure maps $\underline{M}^* (f)$ are called \emph{restrictions}, the $\underline{M}_* (f)$ are called \emph{transfers} and the $\underline{M}_{\star} (f)$ are called \emph{norms}. When the choice of $\underline{M}$ is clear we will denote these by $r_f$, $t_f$ and $n_f$, respectively. \\
\indent Now for any $X \in \Fin_G$, the composite
\begin{align*}
	\underline{M} (X) \times \underline{M} (X) \cong \underline{M} (X \textstyle \coprod X) \xrightarrow{\underline{M}_* (Id_X \coprod Id_X)} \underline{M} (X)
\end{align*}

defines an operation making $\underline{M}(X)$ into a commutative monoid. We call this addition; the unit (zero) comes from the unique transfer
\begin{align*}
	\underline{M}_* : \underline{M} (\emptyset) \to \underline{M} (X).
\end{align*}

A \emph{Tambara functor} is a semi-Tambara functor $\underline{M}$ such that these monoids are abelian groups. We denote by $Tamb(G)$ the category of Tambara functors. Note that we obtain analogous definitions of semi-Mackey and Mackey functors by deleting the norms from the above definition.\\
\indent Next, using norm maps instead, we obtain a second operation which distributes over the first. We call this multiplication. With these commutative semi-ring structures the restrictions become maps of rings, the transfers are maps of modules, and the norms are maps of multiplicative monoids. Thus one obtains a "forgetful" functor from $Tamb(G)$ to $Comm(G)$. We also obtain forgetful functors $sTamb(G) \to sMack(G)$ and $Tamb(G) \to Mack(G)$ by neglect of the norms.\\
\indent Recall that (semi-)Mackey functors can be defined equivalently in terms of the $\underline{M} (G/H)$, with structure maps corresponding to isomorphisms of orbits (\emph{conjugations}) and restrictions $r_K^H$ and transfers $t_K^H$ associated to the canonical projections $G/K \to G/H$ for $K \subseteq H$. Similarly, it is possible to define (semi-)Tambara functors in terms of these sets and maps, the sum and product operations and the corresponding norm maps $n_K^H$. The resulting description is rather cumbersome; however, it is at least easy to see that this data determines the Tambara functor, and that maps of Mackey functors respecting this data are maps of Tambara functors. Also, assuming $K \subsetneq H$ we have (schematically)
\begin{itemize}
\item $n_K^H (a + b) = n_K^H (a) + n_K^H (b) + t( ... )$, and
\item $n_K^H t_L^K = t( ... )$ when $L \subsetneq K$,
\end{itemize}

where the "t( ... )" are sums of transfers of expressions involving subgroups that are smaller than $H$. This allows us to do inductive proofs. For example, if $\underline{M}$ and $\underline{N}$ are Tambara functors and $f : \underline{M} \to \underline{N}$ is a map of commutative Green functors, and if each $\underline{M} (G/K)$ is generated as a ring by transfers of elements up from proper subgroups and elements on which $f$ commutes with the norm maps $n_K^H$, then $f$ is a map of Tambara functors.\\
\indent The Grothendieck group construction gives left adjoints
\begin{align*}
	sMack(G) &\to Mack(G)\\
	sTamb(G) &\to Tamb(G)
\end{align*}

to the appropriate forgetful functors. For Mackey functors this is trivial; for Tambara functors see~\cite{Tambara} (or, alternatively, Section~13 of~\cite{Stri}). Next we give a definition of free Tambara functors.
\begin{definition}\label{def:freetambara}
Let $\underline{M}$ be a Mackey functor. A \emph{free Tambara functor} on $\underline{M}$ is a Tambara functor $\TT(\underline{M})$ together with a map of Mackey functors $\underline{M} \to \TT(\underline{M})$ which is initial among maps from $\underline{M}$ to Tambara functors. A \emph{free semi-Tambara functor} on a semi-Mackey functor $\underline{M}$ is a semi-Tambara functor $s\TT(\underline{M})$ together with a map of semi-Mackey functors $\underline{M} \to s\TT(\underline{M})$ which is initial among maps from $\underline{M}$ to semi-Tambara functors.
\end{definition}

\indent \emph{Remark:} It is not difficult to give an algebraic proof that the forgetful functor from (semi-)Tambara functors to (semi-)Mackey functors has a left adjoint, and therefore that every Mackey functor generates a free Tambara functor. However, we do not need to assume this, and in fact will obtain the existence of free Tambara functors as a corollary.\\
\indent We now give a description of the free Tambara functor on a represented Mackey functor. Fix a finite $G$-set $T$.\\
\indent Let $X$ be a finite $G$-set. Let $F_T (X)$ be the set of isomorphism classes of diagrams in $\Fin_G$ of the form below.
\begin{align}\label{eq:freetambdiag}
\xymatrix{
 & U \ar[dl] \ar[r] & V \ar[dr] & \\
 T & & & X }
\end{align}

Such a diagram represents a transfer of a norm of a restriction of the universal element. Now if $X \to Y$ is a map of finite $G$-sets, we define the corresponding transfer of the above diagram by composing $V \to X$ with $X \to Y$. If instead $Y \to X$, we define the corresponding restriction of~\ref{eq:freetambdiag} by the sequence of arrows on the bottom of the diagram below, where the squares are pullbacks and the triangle commutes.
\begin{align*}
\xymatrix{
 T & U \ar[l] \ar[r] & V \ar[r] & X \\
    & Q \ar[ul] \ar[u] \ar[r] & P \ar[u] \ar[r] & Y \ar[u] }
\end{align*}

Addition is then given by taking disjoint unions of the $U$'s and $V$'s; the diagram with $U$ and $V$ empty is the additive unit. To define norm maps, we take our cue from the distributive law. Given a map $X \to Y$ of finite $G$-sets, we define the corresponding norm of~\ref{eq:freetambdiag} by the sequence of arrows on the bottom in the diagram below,
\begin{align*}
\xymatrix{
 & U \ar[dl] \ar[r] & V \ar[dr] & \\
 T & P \ar[l] \ar[r] \ar[dr] \ar[u] & A \ar[u] \ar[d] & X \ar[d] \\
 & & B \ar[r] & Y }
\end{align*}

where the square is a pullback, the triangles commute, and the trapezoid is exponential. To show that $F_T$ is a semi-Tambara functor, we use the following three lemmas, which we state without proof.

\begin{lem}\label{lem:distrpullback}
(Commutation of norms and restrictions) Suppose given maps of finite $G$-sets as below, where the squares are pullbacks.
\begin{align*}
\xymatrix{
 X \ar[r]^-{i} & Y \ar[r]^-{j} & Z \\
 Q \ar[u] \ar[r]_-{g} & P \ar[u] \ar[r]_-{h} & W \ar[u]_-{k} }
\end{align*}
Then the pullback over $k$ of the exponential diagram for $i,j$ is the exponential diagram for $g,h$.
\end{lem}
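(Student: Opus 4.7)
The plan is to unwind both sides set-theoretically and exhibit a canonical isomorphism. Given the two pullback squares in the hypothesis, I identify $Q$ with $X \times_Z W$ and $P$ with $Y \times_Z W$, under which $g$ and $h$ become $i \times \mathrm{id}_W$ and $j \times \mathrm{id}_W$ respectively. Pulling back the exponential diagram for $(i,j)$ along $k$ means forming the fiber product over $Z$ with $W$ of each object in its top row, so what must be shown is that these pulled-back objects agree with the corresponding objects of the exponential diagram for $(g,h)$ and that the structure maps correspond.

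First I would compute $(\prod_{i,j} X) \times_Z W$. An element is a triple $(z, s, w)$ with $k(w) = z$ and $s : j^{-1}(z) \to X$ a section of $i$; equivalently, a pair $(w, s)$ with $s : j^{-1}(k(w)) \to X$ a section of $i$. The pullback square $P = Y \times_Z W$ canonically identifies $h^{-1}(w)$ with $j^{-1}(k(w))$, and then $Q = X \times_Y P$ translates a section of $i$ over $j^{-1}(k(w))$ into a section of $g$ over $h^{-1}(w)$. This yields a natural $G$-equivariant bijection $(\prod_{i,j} X) \times_Z W \cong \prod_{g,h} Q$ compatible with the projections to $W$.

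For the top-left corner, since the map $Y \times_Z \prod_{i,j} X \to Z$ factors through $\prod_{i,j} X$, I obtain $(Y \times_Z \prod_{i,j} X) \times_Z W \cong Y \times_Z ((\prod_{i,j} X) \times_Z W) \cong Y \times_Z \prod_{g,h} Q \cong P \times_W \prod_{g,h} Q$, using $P = Y \times_Z W$ at the last step. It remains to check that the pulled-back evaluation $e \times_Z W$ sends $(y, (z, s), w) \mapsto (s(y), w) \in X \times_Z W = Q$, which is precisely the evaluation for $(g,h)$ applied to the transported data; the remaining arrows are projections, so they match automatically. The argument is essentially bookkeeping, and I expect the main (and only mildly delicate) obstacle to be the careful translation of fiberwise sections through the two pullback squares in order to produce the bijection with $\prod_{g,h} Q$.
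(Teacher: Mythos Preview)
The paper does not actually prove this lemma: it is one of three lemmas that the author explicitly ``state[s] without proof'' just before Proposition~\ref{prop:repsemitamb}. So there is no argument in the paper to compare against.

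Your proposal is the natural direct verification and is correct. The only spot worth tightening when you write it out is the identification $Q = X \times_Y P \cong X \times_Z W$ (which you use when you say sections of $g$ over $h^{-1}(w)$ correspond to sections of $i$ over $j^{-1}(k(w))$); this follows because the outer rectangle of two pullback squares is again a pullback, but it is worth saying so explicitly. Everything else---the bijection $(\prod_{i,j} X)\times_Z W \cong \prod_{g,h} Q$, the chain of isomorphisms for the upper-left corner, and the compatibility of the evaluation maps---goes through exactly as you describe.
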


\begin{lem}\label{lem:distrlaw}
(Distributive law) Suppose given a commutative diagram of finite $G$-sets as below.
\begin{align*}
\xymatrix{
 & & C \ar[dl]_-{g} \ar[r]^-{f} & D \ar[dd]^-{p} \\
 & P \ar[dl]_-{h} \ar[dr] & & \\
 V \ar[dr]_-{i} & & A \ar[dl] \ar[r] & B \ar[dd]^-{q} \\
 & X \ar[dr]_-{j} & & \\
 & & Y \ar[r]_-{k} & Z }
\end{align*}
If the square is a pullback and the two interior pentagons are exponential, then the outer pentagon is exponential; that is, the maps $hg$, $f$ and $qp$ form a distributor for $ji, k$.
\end{lem}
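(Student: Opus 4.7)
The plan is to verify the claim elementwise using the explicit section/fiber description of the $\prod_{i,j}$ construction, and exhibit a canonical bijection between the outer vertex $D$ and $\prod_{ji,k}(V)$ under which the outer pentagon $C, D, V, Y, Z$ becomes the tautological exponential diagram for $(ji, k)$.

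First I would unpack the lower interior pentagon $A, B, X, Y, Z$ (exponential for $(j, k)$). This identifies $B$ with the set of pairs $(z, t)$ where $z \in Z$ and $t \colon k^{-1}(z) \to X$ is a section of $j$, and $A$ with $Y \times_Z B$, the evaluation $A \to X$ sending $(y, (z,t))$ to $t(y)$. The pullback hypothesis then gives $P \cong V \times_X A$, so the fiber of $P \to A$ over $(y, (z, t))$ is $i^{-1}(t(y)) \subseteq V$. Unpacking the upper interior pentagon $C, D, P, A, B$ (exponential for $(P \to A,\, A \to B)$), an element of $D$ is a pair $((z, t), s)$ where $s$ is a section of $P \to A$ over the fiber $A_{(z,t)} \cong k^{-1}(z)$; equivalently, an assignment $y \mapsto v_y \in i^{-1}(t(y))$ for each $y \in k^{-1}(z)$. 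But this data is the same as a single section $v \colon k^{-1}(z) \to V$ of $ji$, via $v(y) := v_y$ (from which $t(y) = i(v(y))$ is recovered). This yields a canonical bijection $D \cong \prod_{ji, k}(V)$ matching $qp \colon D \to Z$ with the standard projection.

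The same repackaging identifies $C = A \times_B D$ with $Y \times_Z \prod_{ji, k}(V)$ and identifies $hg$ with the evaluation map $(y, (z, v)) \mapsto v(y)$, so the outer pentagon is exactly the exponential diagram for $(ji, k)$. The principal obstacle here is bookkeeping rather than insight: one must keep careful track of which object plays the role of $X$, $Y$, $Z$, $A$, $B$ in each of the two nested $\prod$ constructions, and verify that the structure maps $f$, $g$, $h$, $p$, $q$ line up with the template. Once this is laid out, the identifications are manifest.
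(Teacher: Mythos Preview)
Your argument is correct. The paper actually states this lemma \emph{without proof} (it is one of three lemmas explicitly ``stated without proof'' just before Proposition~\ref{prop:repsemitamb}), so there is nothing to compare against; your elementwise verification via the explicit description of $\prod_{i,j}$ is exactly the sort of direct check the paper is leaving to the reader. The key step---that a section of $P \to A$ over the fiber $A_{(z,t)} \cong k^{-1}(z)$ is the same datum as a section of $ji$ over $k^{-1}(z)$, with $t$ recovered as $i \circ v$---is the heart of the matter and you have it right. One small point worth making explicit: you should note that your bijection $D \cong \prod_{ji,k}(V)$ is $G$-equivariant, but this is immediate since every identification you use is natural in the input diagram and $G$ acts by conjugation throughout.
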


\begin{lem}\label{lem:functorialnorm}
(Functorality of norm) Suppose given a commutative diagram of finite $G$-sets as below.
\begin{align*}
\xymatrix{
 Q \ar[d]_-{g} \ar[r]^-{p} & C \ar[d] \ar[r]^-{q} & D \ar[ddd]^-{r} \\
 A \ar[d]_-{h} \ar[r] & B \ar[dd] & \\
 V \ar[d]_-{i} & & \\
 X \ar[r]_-{j} & Y \ar[r]_-{k} & Z }
\end{align*}
If the square is a pullback and the two interior rectangles are exponential, then the outer rectangle is exponential; that is, the maps $hg$, $qp$ and $r$ form a distributor for $i,kj$.
\end{lem}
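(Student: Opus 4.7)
The plan is to match the outer rectangle against the canonical exponential diagram for the pair $(i, kj)$. Without loss of generality we may take the two interior rectangles to be the canonical exponential diagrams. Set $B = \prod_{i,j} V$ with $B \to Y$ the projection $(y,t) \mapsto y$ and $A = X \times_Y B$ with $h(x,(y,t)) = t(x)$; then set $D = \prod_{B \to Y, k} B$ with $r(z,s) = z$ and $C = Y \times_Z D$ with evaluation $(y,(z,s)) \mapsto s(y)$. Since the indicated square is a pullback, $Q \cong A \times_B C$. A point of $Q$ then consists of a tuple $(x, y, z, s)$ with $j(x) = y$, $k(y) = z$, and $s : k^{-1}(z) \to B$ a section of $B \to Y$; this is manifestly the same data as a pair $(x, (z,s)) \in X \times_Z D$, so $Q \cong X \times_Z D$ via the maps $g$ and $qp$.

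Next I would construct a canonical $G$-equivariant isomorphism $\Phi : D \to \prod_{i, kj} V$. A point $(z,s) \in D$ consists of $z \in Z$ together with a section $s : k^{-1}(z) \to B$ of $B \to Y$; for each $y \in k^{-1}(z)$ we have $s(y) = (y, t_y)$ for a unique section $t_y : j^{-1}(y) \to V$ of $i$, and these $t_y$ patch uniquely along the decomposition $(kj)^{-1}(z) = \coprod_{y \in k^{-1}(z)} j^{-1}(y)$ into a section $t : (kj)^{-1}(z) \to V$ of $i$. Setting $\Phi(z,s) = (z,t)$, the inverse restricts a given $t$ to fibers of $j$ and reassembles the resulting sections; both maps are manifestly $G$-equivariant and mutually inverse.

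Finally I would check that under these identifications the map $hg$ coincides with the canonical evaluation $X \times_Z \prod_{i, kj} V \to V$. For $(x, (z,s)) \in X \times_Z D$ with $y = j(x)$ and $s(y) = (y, t_y)$, we have $hg(x, (z,s)) = h(x, (y, t_y)) = t_y(x) = t(x)$ where $t = \Phi(z,s)$, as required; compatibility of $r$ with the projection $\prod_{i, kj} V \to Z$ is automatic. This identifies the outer rectangle with the canonical exponential diagram for $(i, kj)$, completing the proof. The only real obstacle is the bookkeeping of identifications; the mathematical content is the elementary fact that sections of $i$ over fibers of the composite $kj$ correspond bijectively to families, parametrized by $y \in k^{-1}(z)$, of sections of $i$ over fibers of $j$.
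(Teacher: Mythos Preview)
Your argument is correct. The paper actually states this lemma \emph{without proof} (it is one of three lemmas explicitly labeled as such), so there is no proof in the paper to compare against; your direct verification via the canonical exponential diagrams is exactly the sort of bookkeeping the author evidently expected the reader to supply. One small point of phrasing: when you write ``$Q \cong X \times_Z D$ via the maps $g$ and $qp$,'' the map to $X$ is really $i \circ hg$ (equivalently, $g$ followed by the projection $A = X \times_Y B \to X$), not $g$ itself; this is implicit in your later computation that $hg(x,(z,s)) = t(x)$ with $i \circ t = \mathrm{Id}$, but it would be cleaner to say so explicitly when identifying $Q$ with the pullback.
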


Now let $\theta_T \in F_T (T)$ be the element represented by the diagram below.
\begin{align*}
\xymatrix{
 & T \ar[dl]_-{=} \ar[r]^-{=} & T \ar[dr]^-{=} & \\
 T & & & T }
\end{align*}

\begin{prop}\label{prop:repsemitamb}
If $\underline{R}$ is a (semi-)Tambara functor then the map shown below is an isomorphism.
\begin{align*}
	Hom_{sTamb(G)} (F_T, \underline{R}) &\to \underline{R}(T)\\
	                                                      f      &\mapsto f(\theta_T)
\end{align*}
That is, $F_T$ is the free semi-Tambara functor on (i.e. $s\TT$ of) the semi-Mackey functor represented by $T$.
\end{prop}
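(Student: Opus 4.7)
The plan is to construct an inverse $x \mapsto f_x$ to evaluation at $\theta_T$. Given $x \in \underline{R}(T)$, define
\[f_x\bigl([T \xleftarrow{a} U \xrightarrow{b} V \xrightarrow{c} X]\bigr) = t_c\bigl(n_b(r_a(x))\bigr).\]
This is well-defined on isomorphism classes because an isomorphism of representing diagrams gives a compatible pair of conjugations on $U$ and $V$, and functoriality of $\underline{R}^*$, $\underline{R}_*$, $\underline{R}_{\star}$ leaves the composite unchanged. That $f_x$ is natural for transfers is immediate from functoriality of transfers. For restrictions, the double pullback used in the restriction rule is handled by first applying the Mackey relation to push $r$ past $t_c$, then Lemma~\ref{lem:distrpullback} to push the residual restriction past $n_b$, and then functoriality to collapse the two restrictions into one.

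The key verification is naturality under norms. The norm rule on $F_T$ involves an exponential diagram for $(V \to X,\, X \to Y)$ producing $A \to V$, $A \to B$, $B \to Y$, and a pullback $P$ of $U \to V$ along $A \to V$. To match $n_g(f_x(\omega))$ with $f_x$ of the normed diagram, one rewrites $n_g t_c$ using the distributive law axiom of $\underline{R}$ on the very same exponential diagram, obtaining $t_{B \to Y} n_{A \to B} r_{A \to V}$; then pushes $r_{A \to V}$ past $n_b$ via Lemma~\ref{lem:distrpullback} to produce $n_{P \to A} r_{P \to U}$; and finally uses functoriality of norms to combine $n_{A \to B} n_{P \to A}$ into $n_{P \to B}$. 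The result is precisely $t_{B \to Y} n_{P \to B} r_{P \to T}(x)$, as required. Additivity of $f_x$ follows from axiom (i) of $\underline{R}$ applied to $V \coprod V' \to X$ together with the coproduct decomposition of $U$ and $V$ in the sum rule on $F_T$.

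For uniqueness and surjectivity, I would establish the identity
\[[T \xleftarrow{a} U \xrightarrow{b} V \xrightarrow{c} X] = t_c\bigl(n_b(r_a(\theta_T))\bigr)\]
inside $F_T(X)$ itself, by running the restriction, norm, and transfer constructions on $\theta_T$ in turn and checking that each auxiliary pullback or exponential collapses trivially (because the relevant maps starting from $\theta_T$ are identities) at each stage. This identity forces any semi-Tambara map $g$ with $g(\theta_T) = x$ to agree with $f_x$, and taking $a = b = c = \mathrm{id}$ shows $f_x(\theta_T) = x$. The main obstacle is the norm naturality step: it requires interleaving the distributive law axiom of $\underline{R}$, Lemma~\ref{lem:distrpullback}, and the functoriality of norms in exactly the right order so that the intermediate objects $A$, $B$, $P$ line up with the ones built into the definition of the norm on $F_T$. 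Lemmas~\ref{lem:distrlaw} and~\ref{lem:functorialnorm} are not invoked directly in the verification of $f_x$, but they are what make the norm rule on $F_T$ functorial in the first place, and so are logically prior to the statement.
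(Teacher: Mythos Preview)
Your proposal is correct and follows the same strategy as the paper: establish the identity $[T \xleftarrow{a} U \xrightarrow{b} V \xrightarrow{c} X] = t_c n_b r_a(\theta_T)$ in $F_T$ to get injectivity, and construct the inverse $x \mapsto f_x$ by the formula $f_x(\omega) = t_c n_b r_a(x)$. The paper compresses the entire verification that $f_x$ is a map of semi-Tambara functors into the phrase ``it is straightforward,'' whereas you spell out the transfer, restriction, norm, and additivity checks; your account of the norm step (distributive law in $\underline{R}$, then commute restriction past the inner norm, then compose norms) is exactly the intended computation.

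One small correction of attribution: in your restriction and norm checks you cite Lemma~\ref{lem:distrpullback} to push a restriction past a norm, but what you actually need there is axiom~(ii) of Definition~\ref{def:tambara} applied to $\underline{R}$ (the pullback compatibility $\underline{R}^* \underline{R}_\star = \underline{R}_\star \underline{R}^*$). Lemma~\ref{lem:distrpullback} is the statement that pullbacks of exponential diagrams are exponential; it is used to show that $F_T$ \emph{itself} satisfies axiom~(ii) for norms, not to manipulate elements of $\underline{R}$. Your closing remark about Lemmas~\ref{lem:distrlaw} and~\ref{lem:functorialnorm} being logically prior (they establish that $F_T$ is a semi-Tambara functor) is accurate, and the same applies to Lemma~\ref{lem:distrpullback}.
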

\begin{proof}
It is readily verified that the diagram
\begin{align*}
\xymatrix{
 & U \ar[dl]_-{i} \ar[r]^-{j} & V \ar[dr]^-{k} & \\
 T & & & X }
\end{align*}
is equal to $t_k n_j r_i (\theta_T)$. It follows immediately that the map in the statement is injective. Next, given an element $x \in \underline{R}(T)$, we define a map $F_T \to \underline{R}$ by sending the above diagram to $t_k n_j r_i (x)$. It is straightforward to show that this is a well-defined map of semi-Tambara functors, and it clearly maps $\theta_T$ to $x$.
\end{proof}

Now denote by $F_T^+$ the levelwise additive completion of $F_T$. The following is immediate.

\begin{cor}\label{cor:reptamb}
$F_T^+$ is the free Tambara functor on the represented Mackey functor $[\spacedash, T]$, i.e. $F_T^+ = \TT ([\spacedash, T])$.
\end{cor}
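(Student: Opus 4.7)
The plan is to deduce the universal property of $F_T^+$ by chaining together three natural bijections: the Grothendieck-group adjunction between $sTamb(G)$ and $Tamb(G)$, Proposition~\ref{prop:repsemitamb}, and the Yoneda-type identification of $\underline{R}(T)$ with Mackey functor maps out of the represented Mackey functor $[-, T]$.

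Concretely, for an arbitrary Tambara functor $\underline{R}$, I would assemble the following chain of natural bijections:
\begin{align*}
Hom_{Tamb(G)}(F_T^+, \underline{R}) \cong Hom_{sTamb(G)}(F_T, \underline{R}) \cong \underline{R}(T) \cong Hom_{Mack(G)}([-, T], \underline{R}).
\end{align*}
The first bijection is the adjunction for levelwise additive completion: any map $F_T^+ \to \underline{R}$ in $Tamb(G)$ is determined by its precomposition with the unit $F_T \to F_T^+$, and conversely every semi-Tambara map $F_T \to \underline{R}$ extends uniquely to $F_T^+$ because $\underline{R}$ takes values in abelian groups. The second bijection is Proposition~\ref{prop:repsemitamb}, sending $f \mapsto f(\theta_T)$. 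The third bijection is the standard Yoneda-style identification for representable Mackey functors (or, equivalently, the analogous Grothendieck completion from semi-Mackey to Mackey functors applied to the semi-Mackey representable $[-, T]_{sMack}$, which represents evaluation at $T$).

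Finally I would verify that this composite bijection is induced by precomposition with the canonical map $[-, T] \to F_T^+$ obtained as the additive completion of the map of semi-Mackey functors $[-, T]_{sMack} \to F_T$ sending the identity on $T$ to $\theta_T$. This is essentially a bookkeeping check: unwinding Proposition~\ref{prop:repsemitamb}, the element $\theta_T \in F_T(T)$ is precisely the image of $\mathrm{Id}_T$ under the unit, so the Yoneda correspondence lines up tautologically with the evaluation $f \mapsto f(\theta_T)$.

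There is really no substantive obstacle here; the author correctly labels this as immediate. The only mildly delicate point is ensuring the Grothendieck completion adjunction for semi-Tambara functors (invoked from~\cite{Tambara} or~\cite{Stri}) interacts cleanly with the already-established universal property of $F_T$, but since $\underline{R}$ is assumed a Tambara functor and the completion is just levelwise group completion on the underlying semi-Mackey data, this is automatic.
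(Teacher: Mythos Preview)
Your proposal is correct and matches the paper's intent: the paper simply declares the corollary immediate from Proposition~\ref{prop:repsemitamb} and the Grothendieck-completion adjunction $sTamb(G) \rightleftarrows Tamb(G)$ mentioned just before Definition~\ref{def:freetambara}. You have merely unpacked that word ``immediate'' into the explicit chain of bijections, which is exactly the intended argument.
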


We now examine the structure of $F_T (X)$ for a given finite $G$-set $X$. Decomposing $V$ into orbits, we see that $F_T (X)$ is freely generated as a commutative monoid by the diagrams where $V$ is an orbit; these diagrams therefore provide a canonical $\ZZ$-basis for $F_T^+ (X)$. Next, for each $n \geq 0$, we define $F_T [n] (X)$ to be the subset of $F_T (X)$ consisting of those diagrams
\begin{align*}
\xymatrix{
 & U \ar[dl] \ar[r]^-{j} & V \ar[dr] & \\
 T & & & X }
\end{align*}

such that $j^{-1} (v)$ has $n$ elements for all $v \in V$; these clearly form a semi-Mackey functor, and $F_T [n] (X)$ is freely generated by the diagrams in $F_T [n] (X)$ such that $V$ is an orbit. Letting $F_T^+ [n] (X)$ denote the additive completion of $F_T [n] (X)$, we obtain direct sum decompositions as below.
\begin{align*}
	F_T &= \bigoplus_{n \in \NN} F_T [n] \\
	F_T^+ &= \bigoplus_{n \in \NN} F_T^+ [n]
\end{align*}

Note that each $F_T^+ [n] (X)$ is a \emph{finitely generated} free abelian group, since the basis diagrams consist of sets with bounded cardinality. Next, recall that the Burnside Mackey functor $\underline{A}$ is the initial Tambara functor. Since the multiplicative unit in $F_T^+ (X)$ is the diagram below,
\begin{align*}
\xymatrix{
 & \emptyset \ar[dl] \ar[r] & X \ar[dr]^-{=} & \\
 T & & & X }
\end{align*}

it follows that the unique map of Tambara functors $\underline{A} \to F_T^+$ maps the span $X \xleftarrow{f} V \to \ast$, which is $t_f$ of $1 \in \underline{A} (V)$, to the following diagram.
\begin{align*}
\xymatrix{
 & \emptyset \ar[dl] \ar[r] & V \ar[dr]^-{f} & \\
 T & & & X }
\end{align*}

Thus we obtain an induced isomorphism
\begin{align}\label{eq:ft0grad}
	\underline{A} \xrightarrow{\cong} F_T^+ [0].
\end{align}

Meanwhile, the canonical map of Mackey functors $[\spacedash, T] \to F_T^+$ maps the span $X \xleftarrow{i} V \xrightarrow{j} T$ to the diagram below,
\begin{align*}
\xymatrix{
 & V \ar[dl]_-{j} \ar[r]^-{=} & V \ar[dr]^-{i} & \\
 T & & & X }
\end{align*}

so we obtain an induced isomorphism
\begin{align}\label{eq:ft1grad}
	[\spacedash, T] \xrightarrow{\cong} F_T^+ [1].
\end{align}

\indent Next we consider restricting Tambara functors to subgroups. Let $H$ be a subroup of $G$. If $\underline{R}$ is a $G$-Tambara functor its restriction to $H$ is naturally an $H$-Tambara functor; we simply take
\begin{align*}
	(\Res_H^G \underline{R})_{\star} \defeq \underline{R}_{\star} \circ (G \times_H (\spacedash)),
\end{align*}
just as with the restrictions and transfers. Since the induction functor $G \times_H (\spacedash) : \Fin_H \to \Fin_G$ preserves colimits, pullback diagrams and exponential diagrams, we see that we have a restriction functor
\begin{align*}
	\Res_H^G : Tamb(G) \to Tamb(H).
\end{align*}

Now, any map of $G$-sets $f : X \to G \times_H Y$ identifies $X$ canonically as $G \times_H f^{-1} Y$, so diagrams of $G$-sets
\begin{align*}
	T \leftarrow U \to V \to G \times_H X
\end{align*}

are in bijection with diagrams of $H$-sets
\begin{align*}
	\Res_H^G T \leftarrow U' \to V' \to X,
\end{align*}

and thus we see that
\begin{align*}
	\Res_H^G (F_T^+) \cong F_{\Res_H^G T}^+.
\end{align*}

We leave the following simple verification to the reader.

\begin{prop}\label{prop:resft}
The unique map $F_{\Res_H^G T}^+ \to \Res_H^G (F_T^+)$ of Tambara functors making the following diagram commute is an isomorphism.
\begin{align*}
\xymatrix{
 [\spacedash, \Res_H^G T] \ar[d]_-{\cong} \ar[rr]^-{\theta_{\Res_H^G T}} & & F_{\Res_H^G T}^+ \ar@{.>}[d] \\
 \Res_H^G [\spacedash, T] \ar[rr]_-{\Res_H^G (\theta_T)} & & \Res_H^G (F_T^+) }
\end{align*}
\end{prop}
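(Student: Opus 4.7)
My plan is as follows. The existence of the comparison map $\phi : F_{\Res_H^G T}^+ \to \Res_H^G(F_T^+)$ follows immediately from the universal property of $F_{\Res_H^G T}^+$ (Corollary~\ref{cor:reptamb}) applied to the composite Mackey functor map $[\spacedash, \Res_H^G T] \xrightarrow{\cong} \Res_H^G [\spacedash, T] \xrightarrow{\Res_H^G(\theta_T)} \Res_H^G(F_T^+)$, and uniqueness follows from the same universal property together with the condition that the square commute. The content is therefore in showing that $\phi$ is bijective on each finite $H$-set $X$.

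For this I would exploit the set-level observation already recorded in the paragraph preceding the proposition: any $G$-map $V \to G \times_H X$ identifies $V$ canonically with $G \times_H (V \times_{G \times_H X} X)$, so induction along $G \times_H(\spacedash)$ furnishes a bijection between diagrams $\Res_H^G T \leftarrow U' \to V' \to X$ in $\Fin_H$ and diagrams $T \leftarrow U \to V \to G \times_H X$ in $\Fin_G$. This yields a natural bijection $F_{\Res_H^G T}(X) \cong F_T(G \times_H X) = \Res_H^G(F_T)(X)$, and after levelwise additive completion a natural bijection $F_{\Res_H^G T}^+(X) \cong \Res_H^G(F_T^+)(X)$.

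Next I would check that this set-level bijection respects the semi-Tambara structure. Restrictions in $F_{(-)}$ are defined by iterated pullbacks, transfers by postcomposition, and norms by a pullback followed by an exponential diagram (Lemmas~\ref{lem:distrpullback}--\ref{lem:functorialnorm}). Since $G \times_H (\spacedash) : \Fin_H \to \Fin_G$ preserves finite coproducts, pullback squares, and exponential diagrams (as the excerpt notes), each structure map on the $H$-side is carried to the corresponding structure map on the $G$-side under induction of the $U,V$-data. Hence the bijection assembles into an isomorphism of semi-Tambara functors, and after additive completion into an isomorphism of Tambara functors.

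Finally, a quick compatibility check closes the proof: the class $\theta_{\Res_H^G T}$, represented by the identity diagram on $\Res_H^G T$, corresponds under our bijection to the diagram representing $\Res_H^G(\theta_T)$, so the constructed isomorphism makes the square of the proposition commute and therefore coincides with $\phi$. The only nontrivial technical point is the preservation of exponential diagrams by $G \times_H(\spacedash)$, but this has already been invoked in defining $\Res_H^G$ on Tambara functors and so is in hand; beyond it, everything is bookkeeping.
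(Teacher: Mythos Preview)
Your proposal is correct and follows precisely the route the paper indicates: the paper itself gives no proof beyond the sentence ``We leave the following simple verification to the reader,'' after having recorded in the preceding paragraph the key set-level bijection between diagrams $T \leftarrow U \to V \to G\times_H X$ in $\Fin_G$ and diagrams $\Res_H^G T \leftarrow U' \to V' \to X$ in $\Fin_H$. Your argument is exactly the verification the reader is asked to supply, invoking the same bijection and the same preservation properties of $G\times_H(\spacedash)$ already used to define $\Res_H^G$ on Tambara functors.
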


We will need a few easy facts about colimits in $Tamb(G)$.

\begin{lem}\label{lem:dirlimtamb}
The category $Tamb(G)$ has all direct limits. They are computed levelwise as direct limits of sets.
\end{lem}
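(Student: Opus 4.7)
Given a small filtered category $I$ and a diagram $\alpha \mapsto \underline{R}_\alpha$ in $Tamb(G)$, the plan is to define the colimit levelwise by $\underline{R}(X) \defeq \colim_\alpha \underline{R}_\alpha(X)$, computed in $Set$, and to promote this to a Tambara functor by naturality. For each map $f$ of finite $G$-sets the maps $r_f, t_f, n_f$ assemble into natural transformations between the diagrams $\alpha \mapsto \underline{R}_\alpha(X)$ as $X$ varies, so the universal property of the colimit produces induced restriction, transfer and norm maps on $\underline{R}$, giving three functors $\underline{R}^*, \underline{R}_*, \underline{R}_\star$ with common object assignment $X \mapsto \underline{R}(X)$. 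Functoriality in $X$ is inherited for the same reason.

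Next I would verify axioms (i)--(iii) of Definition~\ref{def:tambara}. Condition (i) asserts that a certain canonical comparison map is an isomorphism; it passes to $\underline{R}$ because filtered colimits in $Set$ commute with finite limits, and in particular with binary products. Conditions (ii) and (iii) are equations between composites of structure maps, and any equation that holds for every $\underline{R}_\alpha$ descends to $\underline{R}$, since the two sides are induced from equal maps of the underlying diagram. To conclude that $\underline{R}$ lies in $Tamb(G)$ rather than only in $sTamb(G)$, observe that each commutative monoid $\underline{R}_\alpha(X)$ is already an abelian group; a filtered colimit of abelian groups computed at the level of underlying sets carries an induced abelian group structure, the inverse of a class represented by $x \in \underline{R}_\alpha(X)$ being the class of its inverse in $\underline{R}_\alpha(X)$.

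Finally, the universal property: a cocone $\{\underline{R}_\alpha \to \underline{S}\}$ in $Tamb(G)$ is, in particular, a levelwise compatible family of maps of sets, so it induces a unique family of maps $\underline{R}(X) \to \underline{S}(X)$; compatibility with $r_f, t_f, n_f$ follows by another levelwise application of the universal property. I do not anticipate a real obstacle in the argument; the only substantive point is the passage of condition (i) to the colimit, and this is precisely the commutation of filtered colimits with finite products in $Set$.
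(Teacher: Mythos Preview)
Your proof is correct. The paper states this lemma without proof, treating it as one of a few ``easy facts'' about colimits in $Tamb(G)$; your argument fills in the details in the expected way, the only nontrivial point being precisely the one you isolate, namely that axiom~(i) passes to the colimit because filtered colimits in $Set$ commute with finite products.
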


\begin{lem}\label{lem:weaktambcoprod}
Let $\underline{R}_1$ and $\underline{R}_2$ be Tambara functors. If $\underline{R}_1 \otimes \underline{R}_2$ has a Tambara functor structure (which induces the canonical product) such that the inclusions $j_i : \underline{R}_i \to \underline{R}_1 \otimes \underline{R}_2$ are maps of Tambara functors then it is the coproduct of $\underline{R}_1$ and $\underline{R}_2$ in $Tamb(G)$.
\end{lem}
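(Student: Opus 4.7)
The plan is to reduce the coproduct property in $Tamb(G)$ to the (already available) coproduct property in $Comm(G)$, and then to apply the inductive criterion for detecting Tambara maps recorded earlier in the section. Let $\underline{S}$ be a Tambara functor and let $f_i : \underline{R}_i \to \underline{S}$ be maps of Tambara functors; we must produce a unique Tambara map $f : \underline{R}_1 \otimes \underline{R}_2 \to \underline{S}$ with $f \circ j_i = f_i$. Since $\underline{R}_1 \otimes \underline{R}_2$ equipped with its canonical product is the coproduct of $\underline{R}_1$ and $\underline{R}_2$ in $Comm(G)$ (tensor products of commutative monoids for the box product), there is a unique map of commutative Green functors $f : \underline{R}_1 \otimes \underline{R}_2 \to \underline{S}$ satisfying $f \circ j_i = f_i$. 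Uniqueness at the Tambara level is then automatic, because any Tambara extension of the $f_i$ is in particular a Green extension; so the only content is to check that this $f$ commutes with norms.

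For this we invoke the criterion stated after the distributive-law discussion: $f$ is a map of Tambara functors as soon as, at each subgroup $K$, the ring $(\underline{R}_1 \otimes \underline{R}_2)(G/K)$ is generated by (a) transfers of elements from proper subgroups of $K$ and (b) elements on which $f$ commutes with every norm $n_K^H$. By construction of the box product, $(\underline{R}_1 \otimes \underline{R}_2)(G/K)$ is generated as an abelian group by transfers $t_{K'}^K$ of pure products $j_1(a) \cdot j_2(b)$ for $K' \subseteq K$; in particular, as a ring it is generated by transfers from proper subgroups together with the pure products $j_1(a) \cdot j_2(b)$ at level $G/K$ itself. So it suffices to check that $f$ commutes with each $n_K^H$ on elements of the form $j_1(a) \cdot j_2(b)$.

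This reduces to a short computation. Using multiplicativity of $n_K^H$ (true in both $\underline{R}_1 \otimes \underline{R}_2$ and $\underline{S}$ by hypothesis), the fact that $j_1, j_2, f_1, f_2$ are all maps of Tambara functors, the identity $f \circ j_i = f_i$, and the fact that $f$ is a ring map, one computes
\begin{align*}
f\bigl(n_K^H(j_1(a) \cdot j_2(b))\bigr) &= f\bigl(j_1(n_K^H a) \cdot j_2(n_K^H b)\bigr) = f_1(n_K^H a) \cdot f_2(n_K^H b) \\
&= n_K^H f_1(a) \cdot n_K^H f_2(b) = n_K^H\bigl(f(j_1(a) \cdot j_2(b))\bigr).
\end{align*}
Hence $f$ commutes with norms on all required generators, and so $f$ is a map of Tambara functors. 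The only obstacle worth flagging is conceptual rather than technical: one must recognize that the pure products together with transfers from proper subgroups generate the ring at each level, so that the inductive ring-generation criterion applies; once that reduction is in place the norm compatibility is essentially a one-line chase using multiplicativity of the norm.
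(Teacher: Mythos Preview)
Your proof is correct and follows essentially the same approach as the paper: reduce to the unique Green-functor map, then verify norm compatibility on simple tensors (your ``pure products'' $j_1(a)\cdot j_2(b)$) and invoke the inductive criterion from earlier in the section. The paper's version is terser, simply saying the result follows by induction on $|H|$ via the distributive law, which is precisely the content of the criterion you cite.
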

\begin{proof}
Suppose we have maps $f_i : \underline{R}_i \to \underline{R}_3$ of Tambara functors for $i=1,2$. Then there is a unique map of commutative Green functors $f : \underline{R}_1 \otimes \underline{R}_2 \to \underline{R}_3$ such that $f j_1 = f_1$ and $f j_2 = f_2$. Thus it suffices to show that $f$ commutes with the norm maps $n_K^H$ for all pairs of subgroups $K \subseteq H$ of $G$. Since we know that it does on simple tensors $x \otimes y$, and that every element in $\underline{R}_1 \otimes \underline{R}_2 (G/K)$ is a sum of transfers of such elements, this can be proven by induction on the order of $H$ using the distributive law.
\end{proof}

\begin{cor}\label{cor:weaktambpushout}
Let $\underline{R}_1$ and $\underline{R}_2$ be Tambara functors as in Lemma~\ref{lem:weaktambcoprod}, and let $g_i : \underline{R}_0 \to \underline{R}_i$ be maps of Tambara functors for $i=1,2$. If $\underline{R}_1 \otimes_{\underline{R}_0} \underline{R}_2$ has a Tambara functor structure such that the quotient map
\begin{align*}
	\underline{R}_1 \otimes \underline{R}_2 \xrightarrow{q} \underline{R}_1 \otimes_{\underline{R}_0} \underline{R}_2
\end{align*}
is a map of Tambara functors then it is the pushout in $Tamb(G)$.
\end{cor}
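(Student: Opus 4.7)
The plan is to reduce the corollary to Lemma~\ref{lem:weaktambcoprod} by exploiting the fact that the quotient map $q$ is a levelwise surjection, so the usual construction of pushouts from coproducts via a quotient carries over.

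First I would verify that the two induced maps $\underline{R}_i \to \underline{R}_1 \otimes_{\underline{R}_0} \underline{R}_2$ are maps of Tambara functors: each is the composite of the canonical inclusion $\underline{R}_i \to \underline{R}_1 \otimes \underline{R}_2$, which is a map of Tambara functors by Lemma~\ref{lem:weaktambcoprod}, with $q$, which is a map of Tambara functors by hypothesis. That they agree after composing with $g_1$, $g_2$ holds already at the level of commutative Green functors, since $\underline{R}_1 \otimes_{\underline{R}_0} \underline{R}_2$ is the pushout in $Comm(G)$.

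Next, suppose $f_i : \underline{R}_i \to \underline{R}_3$ are Tambara maps with $f_1 g_1 = f_2 g_2$. By Lemma~\ref{lem:weaktambcoprod} there is a unique Tambara map $\tilde{f} : \underline{R}_1 \otimes \underline{R}_2 \to \underline{R}_3$ extending $f_1$ and $f_2$. Since the $f_i$ agree on $\underline{R}_0$, the underlying map of commutative Green functors $\tilde f$ factors uniquely through the commutative Green pushout $\underline{R}_1 \otimes_{\underline{R}_0} \underline{R}_2$, yielding a map $\phi$ of commutative Green functors with $\phi \circ q = \tilde f$.

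The main step is to promote $\phi$ to a map of Tambara functors; this is where the surjectivity of $q$ does the work. Since $\underline{R}_1 \otimes_{\underline{R}_0} \underline{R}_2$ is, levelwise, a quotient of the abelian group $\underline{R}_1 \otimes \underline{R}_2$, the map $q$ is surjective at each $G/K$. Thus given $x \in (\underline{R}_1 \otimes_{\underline{R}_0} \underline{R}_2)(G/K)$ I can choose a lift $\tilde{x}$ with $q(\tilde{x}) = x$ and compute
\begin{align*}
\phi(n_K^H x) = \phi(n_K^H q(\tilde{x})) = \tilde{f}(n_K^H \tilde{x}) = n_K^H \tilde{f}(\tilde{x}) = n_K^H \phi(x),
\end{align*}
using in turn that $q$ and $\tilde{f}$ are Tambara maps. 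So $\phi$ commutes with all norms $n_K^H$, and together with the fact that it is already a map of commutative Green functors this makes it a map of Tambara functors. Uniqueness of $\phi$ as a Tambara map extending $(f_1, f_2)$ is immediate from the levelwise surjectivity of $q$: any Tambara map $\phi'$ with $\phi' q = \tilde{f} = \phi q$ must agree with $\phi$ at every level. I don't anticipate any serious obstacle; the only subtle point is confirming that $q$ really is levelwise surjective, which follows from the standard construction of $\underline{R}_1 \otimes_{\underline{R}_0} \underline{R}_2$ as a levelwise quotient.
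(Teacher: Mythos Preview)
Your proposal is correct and follows essentially the same approach as the paper: both use Lemma~\ref{lem:weaktambcoprod} to obtain a Tambara map out of $\underline{R}_1 \otimes \underline{R}_2$, factor it through the commutative Green pushout, and then invoke the levelwise surjectivity of $q$ to conclude the factored map is Tambara. The paper compresses your explicit norm computation into the single phrase ``$f$ is as well because $q$ is surjective,'' but the content is the same.
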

\begin{proof}
Suppose we have maps $f_i : \underline{R}_i \to \underline{R}_3$ in $Tamb(G)$ for $i=1,2$ such that $f_1 g_1 = f_2 g_2$. Then there is a unique map of commutative Green functors $f : \underline{R}_1 \otimes_{\underline{R}_0} \underline{R}_2 \to \underline{R}_3$ such that $fqj_1 = f_1$ and $fqj_2 = f_2$. By Lemma~\ref{lem:weaktambcoprod}, the map $fq$ is a map of Tambara functors, so $f$ is as well because $q$ is surjective.
\end{proof}

\section{Commutative Ring $G$-Spectra}\label{sec:commspectra}

In this section we analyze the effects of coproducts and pushouts on $\underline{\pi}_0$ of commutative ring spectra, which will eventually allow us to construct our "free resolutions" of Tambara functors. We work in the category of orthogonal $G$-spectra, which we denote by $Sp_G$, since this is the most convenient context for analyzing norm constructions. We utilize the $\SSS$ model structures of~\cite{Stolz}, so that induction functors are left Quillen functors. Denote by $comm_G$ the category of commutative ring orthogonal $G$-spectra, and recall that we can pull back the positive stable $\SSS$ model structure to this category. We refer to~\cite{Stolz} and Section~A.4 of~\cite{Ull} for background on these model structures, which build on the classical ones from~\cite{MM}. We shall utilize the following definitions.

\begin{definition}\label{def:flat}
If $k$ is a commutative ring spectrum and $X$ is a $k$-module, we say that $X$ is \emph{flat} if the functor $X \wedge_k (\spacedash)$ preserves weak equivalences between $k$-modules. An \emph{h-cofibration} of $k$-modules is a map $X \to Y$ of $k$-modules that has the homotopy extension property.
\end{definition}

If no coefficient ring $k$ is mentioned, we will take $k = S$, the sphere spectrum. Note that h-cofibrations are spacewise closed inclusions.

\begin{lem}\label{lem:flatness}
The following classes of spectra are flat.
\begin{enumerate}[(i)]
\item Cofibrant spectra
\item Symmetric powers of positive cofibrant spectra
\item Cofibrant commutative ring spectra
\end{enumerate}
\end{lem}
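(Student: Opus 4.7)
The plan is to verify the three classes in sequence, using standard closure properties of the class of flat spectra: it is closed under retracts, arbitrary wedges, pushouts along h-cofibrations between flat objects, and transfinite composites of h-cofibrations. For (i), I would argue by cellular induction along the generating cofibrations of the positive $\SSS$ model structure on $Sp_G$; these have the form $G_+ \wedge_H S^{-V} \wedge (\partial D^n_+ \to D^n_+)$ with $V^H \neq 0$, and their sources and targets are flat by a direct check, since they are desuspensions of suspension spectra of induced $G$-spaces, for which smashing preserves weak equivalences. Since every cofibrant spectrum is a retract of a cell complex built from these cells by pushouts and transfinite composition, (i) follows.

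For (ii), this is essentially the defining feature of Stolz's $\SSS$ model structure. The positivity condition $V^H \neq 0$ ensures that for positive cofibrant $X$, the $\Sigma_n$-action on $X^{\wedge n}$ is sufficiently free that $X^{\wedge n}/\Sigma_n$ computes the homotopy orbit $(X^{\wedge n})_{h\Sigma_n}$, and more importantly that smashing with $X^{\wedge n}/\Sigma_n$ preserves weak equivalences between arbitrary spectra. The argument filters $Y \wedge X^{\wedge n}/\Sigma_n$ by the orbit-type stratification of $\Sigma_n$ on the powers of $X$ and exploits the fact that each stabilizer acts with appropriate freeness on the positively cofibrant factors; the full verification appears in \cite{Stolz}.

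For (iii), a cofibrant commutative ring spectrum $R$ is a retract of a transfinite composite of pushouts in $comm_G$ along maps $\mathbb{P}(X) \to \mathbb{P}(Y)$, where $X \to Y$ is a generating positive cofibration in $Sp_G$ and $\mathbb{P}(-) = \bigvee_{n \geq 0} (-)^{\wedge n}/\Sigma_n$ is the free commutative algebra functor. By (i) and (ii), $\mathbb{P}(X)$ is a wedge of flat spectra, hence flat. The pushout $A \wedge_{\mathbb{P}(X)} \mathbb{P}(Y)$ admits the standard symmetric filtration whose associated graded pieces are built from terms of the form $A \wedge (Y/X)^{\wedge k}/\Sigma_k$, each of which is flat by (ii) smashed with the flat $A$; the filtration maps are h-cofibrations, so flatness propagates to the pushout, then to transfinite composites, and finally to $R$ itself. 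The main obstacle throughout is (ii) --- in the classical positive stable model structure of \cite{MM}, symmetric powers of positive cofibrant spectra are \emph{not} flat in general, and avoiding this pathology is the entire reason for passing to Stolz's $\SSS$ model structure; once (ii) is granted, (iii) reduces to bookkeeping with the symmetric filtration on pushouts of commutative ring spectra.
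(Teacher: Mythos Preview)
The paper does not actually prove this lemma; it simply cites Section~A.4 of \cite{Ull}. Your sketch is the standard argument and is essentially what one finds in such references, so in that sense your approach matches. One small correction: in part~(i) the lemma concerns spectra that are cofibrant in the (non-positive) $\SSS$ model structure, whereas you describe the generating cofibrations of the \emph{positive} $\SSS$ model structure (the condition $V^H \neq 0$ is the positivity condition). The cellular induction you outline works equally well with the non-positive generating cofibrations, so this is only a matter of stating the correct generating set. Your arguments for (ii) and (iii) are correct; note that the filtration you invoke for (iii) is exactly the one the paper itself spells out in the paragraphs following Corollary~\ref{cor:moduleflatness}, and your observation that $(Y/X)^{\wedge k}/\Sigma_k$ is flat uses that the cofiber of a generating positive cofibration is again positive cofibrant, which is true here.
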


For proofs, see Section~A.4 of~\cite{Ull}.

\begin{cor}\label{cor:moduleflatness}
If $k$ is a commutative ring spectrum then cofibrant $k$-modules are flat (as $k$-modules).
\end{cor}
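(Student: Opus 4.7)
The strategy is a transfinite induction on the cellular structure of a cofibrant $k$-module, anchored by Lemma~\ref{lem:flatness}(i). Since $(\spacedash) \wedge_k N$ preserves retracts and weak equivalences are closed under retracts, flatness itself is closed under retracts, so it suffices to prove the statement for cell $k$-modules: those $M$ obtained as transfinite compositions $\colim_\alpha M_\alpha$ starting from $M_0 = \ast$, with each $M_\alpha \to M_{\alpha+1}$ a pushout along a map $k \wedge i_\alpha : k \wedge A_\alpha \to k \wedge B_\alpha$ for a generating cofibration $i_\alpha$ of orthogonal $G$-spectra (so $A_\alpha, B_\alpha$ are cofibrant spectra).

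The base case exploits the natural identification $(k \wedge X) \wedge_k N \cong X \wedge N$. Since weak equivalences of $k$-modules are by definition weak equivalences of underlying spectra, part (i) of Lemma~\ref{lem:flatness} gives immediately that $k \wedge X$ is flat as a $k$-module whenever $X$ is a cofibrant spectrum. For the inductive step, suppose $M_\alpha$ is flat and form
\begin{align*}
M_{\alpha+1} = M_\alpha \cup_{k \wedge A_\alpha} (k \wedge B_\alpha).
\end{align*}
Because $(\spacedash) \wedge_k N$ preserves colimits, a weak equivalence $N \to N'$ of $k$-modules induces a map of pushout diagrams whose corner terms $M_\alpha \wedge_k N$, $A_\alpha \wedge N$, $B_\alpha \wedge N$ map by weak equivalences to those for $N'$ (by the inductive hypothesis and the base case). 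As $A_\alpha \to B_\alpha$ is a cofibration between cofibrant spectra, $A_\alpha \wedge N \to B_\alpha \wedge N$ is an h-cofibration, so the gluing lemma for h-cofibrations gives a weak equivalence on pushouts. Limit ordinals are handled by the same reasoning together with the fact that filtered colimits along h-cofibrations preserve weak equivalences, as in the $\SSS$ model structure of \cite{Stolz} and Section~A.4 of \cite{Ull}.

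The main technical obstacle to watch for is ensuring that at each stage of the cellular construction, the underlying spectrum maps are h-cofibrations (so that the gluing lemma and filtered colimit arguments apply) and that $k \wedge i$ is an h-cofibration whenever $i$ is a cofibration of spectra. These points are standard consequences of the pushout-product and unit axioms for the $\SSS$ model structure and are established in the cited references, so the argument reduces cleanly to the spectrum-level flatness already recorded in Lemma~\ref{lem:flatness}.
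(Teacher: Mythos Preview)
The paper states this as a corollary of Lemma~\ref{lem:flatness} without supplying any argument, deferring the details to Section~A.4 of~\cite{Ull}. Your proof is correct and is exactly the standard cellular induction one expects to find there: reduce to cell $k$-modules via closure under retracts, use the identification $(k\wedge X)\wedge_k N \cong X\wedge N$ together with Lemma~\ref{lem:flatness}(i) for the free modules, and then pass through pushouts and transfinite compositions using the gluing lemma along h-cofibrations. There is nothing to compare against in the paper itself, and no gap in what you wrote.
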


Next we recall the free commutative ring spectrum functor. It is given by the following.

\begin{align*}
	\CC(X) \defeq S \vee X \vee X^{\wedge 2}/\Sigma_2 \vee X^{\wedge 3}/\Sigma_3 \vee ...
\end{align*}

Now suppose that $i : A \to B$ is a generating positive cofibration of spectra. For any $j > 0$, denote by $\partial_A B^{\wedge j}$ the "union of the images" of the maps $B^{\wedge j-k-1} \wedge A \wedge B^{\wedge k} \to B^{\wedge j}$. One now has a filtration $\{Y_j\}$ of $\CC (B)$ by $\CC (A)$-submodules such that $Y_0 = \CC (A)$ and there are pushout diagrams of $\CC (A)$-modules as below.
\begin{align*}
\xymatrix{
 \CC (A) \wedge (\partial_A B^{\wedge j})/\Sigma_j \ar[d] \ar[r] & \CC (A) \wedge B^{\wedge j}/\Sigma_j \ar[d] \\
 Y_{j-1} \ar[r] & Y_j }
\end{align*}

Now suppose that we have a pushout diagram as below.
\begin{align*}
\xymatrix{
 \CC (A) \ar[d] \ar[r]^-{\CC (i)} & \CC (B) \ar[d] \\
 X \ar[r] & Y }
\end{align*}

Applying the functor $X \wedge_{\CC (A)} (\spacedash)$ to the above filtration on $\CC (B)$, we obtain a filtration $\{Y_j\}$ of $Y$ by $X$-modules such that $Y_0 = X$ and for any $j > 0$ the map $Y_{j-1} \to Y_j$ is an h-cofibration with quotient isomorphic to $X \wedge (B/A)^{\wedge j}/\Sigma_j$. We can now prove the following.

\begin{lem}\label{lem:commcofflat}
Suppose we have a pushout diagram
\begin{align*}
\xymatrix{
 X \ar[d] \ar[r] & Z \ar[d] \\
 Y \ar[r] & P }
\end{align*}
in $comm_G$. If $X \to Y$ is a cofibration then $P = Z \wedge_X Y$ is a derived smash product in the category of $X$-modules.
\end{lem}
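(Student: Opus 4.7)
The target reduces to showing that $Y$ is flat as an $X$-module. Indeed, once $Y$ is flat over $X$, the underived smash product $Z \wedge_X Y$ automatically computes the derived smash product regardless of how $Z$ is replaced by a weakly equivalent $X$-module, which is precisely the content of being a derived smash product.

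First I would reduce to a single cell attachment. Every cofibration in $comm_G$ is a retract of a transfinite composition of pushouts along generating cofibrations $\CC(i) : \CC(A) \to \CC(B)$, where $i : A \to B$ ranges over the generating positive cofibrations of $Sp_G$. Flatness of $X$-modules is closed under retracts and under sequential colimits along h-cofibrations (such colimits commute with smash products and preserve weak equivalences), so it is enough to treat the single pushout case.

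In that case, I would use the filtration $\{Y_j\}$ of $Y$ by $X$-modules described immediately before the lemma: $Y_0 = X$, the maps $Y_{j-1} \to Y_j$ are h-cofibrations, and the cofibers are $X \wedge (B/A)^{\wedge j}/\Sigma_j$. Since $B/A$ is positive cofibrant (being the cofiber of a generating positive cofibration of spectra), Lemma~\ref{lem:flatness}(ii) gives that $(B/A)^{\wedge j}/\Sigma_j$ is flat as a spectrum. The identification $W \wedge_X (X \wedge Z) \cong W \wedge Z$ then shows that smashing $X$ with a flat spectrum produces a flat $X$-module, so each cofiber in the filtration is flat over $X$. By induction on $j$, each $Y_j$ is flat over $X$: given flatness of $Y_{j-1}$ and of the cofiber, smashing a weak equivalence of $X$-modules with the h-cofiber sequence $Y_{j-1} \to Y_j \to Y_j/Y_{j-1}$ yields a ladder of h-cofiber sequences in which two columns are weak equivalences, and the five-lemma (applied to the associated long exact sequence of homotopy Mackey functors) gives the third. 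Passing to the sequential colimit $Y = \colim_j Y_j$ along h-cofibrations of flat $X$-modules preserves flatness, completing the argument.

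The main obstacle is the bookkeeping around h-cofibrations: one must confirm that smashing an h-cofibration of $X$-modules with another $X$-module produces an h-cofibration (so that the filtration computes a cofiber sequence after smashing), and that sequential colimits along such h-cofibrations are homotopy colimits. These are standard consequences of the model-theoretic material in Section~A.4 of~\cite{Ull}, and once they are in hand the rest of the proof is a direct application of Lemma~\ref{lem:flatness} and Corollary~\ref{cor:moduleflatness}.
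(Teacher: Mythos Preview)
Your proposal is correct and follows essentially the same route as the paper: reduce to showing $Y$ is flat as an $X$-module, pass to a transfinite cell filtration of the cofibration $X \to Y$, and at each successor stage use the $\{Y_j\}$ filtration with cofibers $X_\alpha \wedge (B/A)^{\wedge j}/\Sigma_j$, invoking Lemma~\ref{lem:flatness}(ii) for the flatness of the symmetric power factor. The paper frames the transfinite step as an explicit induction (``assume $X_\alpha$ is flat over $X$, then $X_{\alpha+1}$ is flat over $X$''), which makes it clearer why the cofibers are flat over the \emph{original} $X$ rather than merely over $X_\alpha$; your reduction ``to the single pushout case'' glosses over this point, but the missing transitivity step is routine.
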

\begin{proof}
By Corollary~\ref{cor:moduleflatness} it suffices to show that $Y$ is flat as an $X$-module. Now we may assume that $X \to Y$ is a cell in the generating cofibrations, since it is a retract of one, and flat modules are closed under retract. By attaching one cell at a time, we obtain a transfinite filtration $\{ X_{\alpha} \}$ of $Y$ such that
\begin{itemize}
\item $X_0 = X$,
\item $X_{\alpha} = \varinjlim_{\gamma < \alpha} X_{\gamma}$ when $\alpha$ is a limit element, and
\item for each $\alpha$ with a succesor there is a generating positive cofibration $i_{\alpha} : A_{\alpha} \to B_{\alpha}$ and a pushout diagram in $comm_G$ as below.
\begin{align*}
\xymatrix{
 \CC (A_{\alpha}) \ar[d] \ar[r]^-{\CC (i_{\alpha})} & \CC (B_{\alpha}) \ar[d] \\
 X_{\alpha} \ar[r] & X_{\alpha+1} }
\end{align*}
\end{itemize}
By the above analysis of such pushouts, they are h-cofibrations of $X_{\alpha}$-modules; hence, h-cofibrations of $X$-modules by neglect of structure. Now $X$ is certainly a flat $X$-module, so by transfinite induction it suffices to show that $X_{\alpha+1}$ is a flat $X$-module if $X_{\alpha}$ is. Of course, $X_{\alpha+1}$ is filtered by h-cofibrations of $X$-modules with successive quotients of the form $X_{\alpha} \wedge (B_{\alpha}/A_{\alpha})^j/\Sigma_j$, so the result follows by Lemma~\ref{lem:flatness}(ii) and the inductive hypothesis.
\end{proof}

Next we recall that the zeroth homotopy group of a commutative ring spectrum is (naturally) a Tambara functor; that is, we have a functor
\begin{align*}
	\underline{\pi}_0 : comm_G \to Tamb(G).
\end{align*}

For proof, see~\cite{Brun} or~\cite{Stri}. We have the following facts.

\begin{prop}\label{prop:commcoprod}
If $\{ X_{\alpha} \}$ is a collection of cofibrant, $(-1)$-connected commutative ring spectra then $\underline{\pi}_0 (\coprod_{\alpha} X_{\alpha})$ is the coproduct of the $\underline{\pi}_0 (X_{\alpha})$ in $Tamb(G)$.
\end{prop}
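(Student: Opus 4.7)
The plan is to reduce to a binary coproduct and then invoke Lemma~\ref{lem:weaktambcoprod} via the K\"unneth-type isomorphism recalled in the introduction.

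First I would handle two cofibrant $(-1)$-connected commutative ring spectra $X_1, X_2$. Their coproduct in $comm_G$ is the smash product $X_1 \wedge X_2$. By Lemma~\ref{lem:flatness}(iii), each $X_i$ is flat as a spectrum, so this smash product is derived, and the K\"unneth isomorphism gives $\underline{\pi}_0(X_1 \wedge X_2) \cong \underline{\pi}_0 X_1 \otimes \underline{\pi}_0 X_2$ as commutative Green functors. The canonical inclusions $X_i \to X_1 \wedge X_2$ in $comm_G$ (gotten by smashing with the unit of the other factor) are maps of commutative ring spectra, hence $\underline{\pi}_0$ sends them to maps of Tambara functors, and under the K\"unneth isomorphism they correspond to the canonical inclusions into the tensor product. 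Lemma~\ref{lem:weaktambcoprod} then identifies $\underline{\pi}_0(X_1 \wedge X_2)$ as the coproduct in $Tamb(G)$. Finite coproducts follow by induction on $|A|$.

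For an arbitrary index set $A$, I would realize $\coprod_{\alpha \in A} X_\alpha$ as the filtered colimit over finite subsets $F \subset A$ of $\coprod_{\alpha \in F} X_\alpha$. Each transition map for $F \subset F'$ is obtained by pushing out along $S \to \coprod_{\beta \in F' \setminus F} X_\beta$ in $comm_G$; since each $S \to X_\beta$ is a cofibration (as $X_\beta$ is cofibrant) and cofibrations are closed under coproducts, the analysis preceding Lemma~\ref{lem:commcofflat} shows that every transition map is an $h$-cofibration of underlying spectra. Thus $\underline{\pi}_0$ commutes with this filtered colimit of $(-1)$-connected spectra, and by Lemma~\ref{lem:dirlimtamb} together with the fact that coproducts commute with filtered colimits in any cocomplete category, the general result follows from the finite case.

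The main obstacle is the binary case, specifically the verification that the Tambara structure on $\underline{\pi}_0(X_1 \wedge X_2)$ induces the canonical commutative Green functor product on $\underline{\pi}_0 X_1 \otimes \underline{\pi}_0 X_2$; this is precisely the hypothesis of Lemma~\ref{lem:weaktambcoprod}, and should follow routinely from the fact that the commutative ring structure on $X_1 \wedge X_2$ is built from those of $X_1$ and $X_2$ in the standard way. No genuinely new Tambara-theoretic input is needed here — all of the substantive work is deferred to Lemma~\ref{lem:weaktambcoprod} and to the constructions of later sections.
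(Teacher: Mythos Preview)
Your proposal is correct and follows essentially the same approach as the paper: the binary case via Lemma~\ref{lem:flatness}(iii) and Lemma~\ref{lem:weaktambcoprod}, finite coproducts by induction, and arbitrary coproducts by passing to the filtered colimit over finite subcollections using Lemma~\ref{lem:dirlimtamb} and the fact that the transition maps are h-cofibrations. The paper is slightly terser in the last step, observing directly that the structure maps are cofibrations in $comm_G$ and hence h-cofibrations, but your unpacking via pushouts is the same argument.
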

\begin{proof}
The case of two factors follows from Lemmas~\ref{lem:flatness}(iii) and~\ref{lem:weaktambcoprod}. The case of a finite collection then follows by induction. The case of an arbitrary collection follows by considering the direct limit over finite subcollections, using Lemma~\ref{lem:dirlimtamb} and the fact that all the structure maps are cofibrations in $comm_G$, hence, h-cofibrations. 
\end{proof}

\begin{prop}\label{prop:commpushout}
Suppose we have a pushout diagram
\begin{align*}
\xymatrix{
 X \ar[d] \ar[r] & Z \ar[d] \\
 Y \ar[r] & P }
\end{align*}
in $comm_G$. If $X \to Y$ is a cofibration, with $X$, $Y$ and $Z$ all cofibrant and $(-1)$-connected, then so is $P$, $\underline{\pi}_0 (P) \cong \underline{\pi}_0 (Z) \otimes_{\underline{\pi}_0 (X)} \underline{\pi}_0 (Y)$ and
\begin{align*}
\xymatrix{
 \underline{\pi}_0 (X) \ar[d] \ar[r] & \underline{\pi}_0 (Z) \ar[d] \\
 \underline{\pi}_0 (Y) \ar[r] & \underline{\pi}_0 (P) }
\end{align*}
is a pushout diagram in $Tamb(G)$.
\end{prop}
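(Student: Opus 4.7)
The plan is to reduce the statement to a Künneth-at-$\pi_0$ calculation for the derived smash product $Z \wedge^L_X Y$ and then invoke the categorical pushout criterion from Corollary~\ref{cor:weaktambpushout}.

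First I would handle the geometric input. Because $X \to Y$ is a cofibration between cofibrant objects of $comm_G$, its cobase change $Z \to P$ is also a cofibration, so $P$ is cofibrant. By Lemma~\ref{lem:commcofflat}, $P$ models the derived smash product $Z \wedge^L_X Y$ as an $X$-module. To see $P$ is $(-1)$-connected, I would imitate the cell filtration from the proof of Lemma~\ref{lem:commcofflat}: expressing $X \to Y$ as a transfinite composite of pushouts of $\CC (A_{\alpha}) \to \CC (B_{\alpha})$ and smashing the filtration $\{X_{\alpha}\}$ of $Y$ over $X$ with $Z$ yields a filtration of $P$ by h-cofibrations whose successive cofibers have the form $(Z \wedge_X X_{\alpha}) \wedge (B_{\alpha}/A_{\alpha})^{\wedge j}/\Sigma_j$. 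Each such cofiber is $(-1)$-connected by Lemma~\ref{lem:flatness}(ii) applied to the positive cofibrant spectrum $B_{\alpha}/A_{\alpha}$, together with an inductive application of the Künneth fact $\underline{\pi}_0 (A \wedge B) \cong \underline{\pi}_0 A \otimes \underline{\pi}_0 B$ recorded in the introduction, and the filtration produces $P$ as a sequential colimit of h-cofibrations between $(-1)$-connected spectra.

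Next I would compute $\underline{\pi}_0 (P)$. Model $Z \wedge^L_X Y \simeq P$ by the two-sided bar construction $B_{\bullet}(Z,X,Y)$ with $n$-simplices $Z \wedge X^{\wedge n} \wedge Y$; by the cofibrancy and $(-1)$-connectedness assumptions this is a proper (Reedy-cofibrant) simplicial object of $(-1)$-connected spectra, so $\underline{\pi}_0$ of its realization is the coequalizer of $\underline{\pi}_0$ on the first two levels. Applying the Künneth isomorphism levelwise identifies this with the reflexive coequalizer
\begin{align*}
\underline{\pi}_0 (Z) \otimes \underline{\pi}_0 (X) \otimes \underline{\pi}_0 (Y) \rightrightarrows \underline{\pi}_0 (Z) \otimes \underline{\pi}_0 (Y),
\end{align*}
which is precisely $\underline{\pi}_0 (Z) \otimes_{\underline{\pi}_0 (X)} \underline{\pi}_0 (Y)$ as a commutative Green functor. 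Transporting the Tambara structure of $\underline{\pi}_0 (P)$ across this isomorphism equips the tensor product over $\underline{\pi}_0 (X)$ with a Tambara functor structure inducing the standard commutative Green functor structure.

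Finally, I would apply Corollary~\ref{cor:weaktambpushout}. The quotient map
\begin{align*}
\underline{\pi}_0 (Z) \otimes \underline{\pi}_0 (Y) \longrightarrow \underline{\pi}_0 (Z) \otimes_{\underline{\pi}_0 (X)} \underline{\pi}_0 (Y) \cong \underline{\pi}_0 (P)
\end{align*}
is identified, via Proposition~\ref{prop:commcoprod}, with the map on $\underline{\pi}_0$ induced by the canonical $comm_G$-morphism from the coproduct $Z \amalg Y$ (which is cofibrant and $(-1)$-connected) to the pushout $P$; this is a map of Tambara functors because it comes from a map in $comm_G$. Corollary~\ref{cor:weaktambpushout} then identifies $\underline{\pi}_0 (P)$ with the pushout of $\underline{\pi}_0 (Z) \leftarrow \underline{\pi}_0 (X) \to \underline{\pi}_0 (Y)$ in $Tamb(G)$. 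The main obstacle is the second step: ensuring that $\underline{\pi}_0$ genuinely converts the bar construction to a coequalizer of Mackey functors requires the simplicial spectrum to be sufficiently cofibrant for its geometric realization to agree with the homotopy colimit, which is where both the cofibrancy hypothesis and the $(-1)$-connectedness hypothesis are essential.
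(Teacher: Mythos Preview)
Your overall strategy coincides with the paper's: identify $P$ with the derived relative smash product $Z \wedge_X^L Y$ via Lemma~\ref{lem:commcofflat}, establish the K\"unneth isomorphism $\underline{\pi}_0(P) \cong \underline{\pi}_0(Z) \otimes_{\underline{\pi}_0(X)} \underline{\pi}_0(Y)$, and invoke Corollary~\ref{cor:weaktambpushout}. The paper's proof is a one-line citation of Lemmas~\ref{lem:flatness}(iii), \ref{lem:commcofflat} and Corollary~\ref{cor:weaktambpushout}; your bar-construction computation of $\underline{\pi}_0(P)$ and your identification of the quotient map with $\underline{\pi}_0$ of the canonical $comm_G$-map $Z \amalg Y \to P$ are a reasonable unpacking of what is left implicit there.

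There is, however, a gap in your separate cell-filtration argument for the $(-1)$-connectedness of $P$. The cofibers $B_\alpha/A_\alpha$ of the generating positive cofibrations are shifted representation spheres of arbitrary (possibly very negative) connectivity, so neither $(B_\alpha/A_\alpha)^{\wedge j}/\Sigma_j$ nor the intermediate stages $Z \wedge_X X_\alpha$ are forced to be $(-1)$-connected. Lemma~\ref{lem:flatness}(ii) supplies only flatness, not a connectivity bound, and the K\"unneth isomorphism from the introduction requires \emph{both} factors to be $(-1)$-connected, so it cannot be invoked on these cofibers. The fix is already in your hands: in the bar construction each level $B_n = Z \wedge X^{\wedge n} \wedge Y$ is a derived smash of $(-1)$-connected flat spectra (Lemma~\ref{lem:flatness}(iii)), hence $(-1)$-connected by the absolute K\"unneth formula, and the skeletal filtration then shows that $|B_\bullet|$ is $(-1)$-connected with $\underline{\pi}_0$ the claimed coequalizer. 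Drop the cell-filtration paragraph and let the bar construction carry both the connectivity statement and the $\underline{\pi}_0$ computation.
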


This follows directly from Lemmas~\ref{lem:flatness}(iii) and~\ref{lem:commcofflat} and Corollary~\ref{cor:weaktambpushout}. We will need the following fact about Postnikov sections.

\begin{prop}\label{prop:commpost0}
If $X$ is a $(-1)$-connected commutative ring spectrum, then there is an initial map
\begin{align*}
	X \to Post^0 X
\end{align*}
in $Ho(comm_G)$ from $X$ to a $1$-coconnected commutative ring spectrum. This map induces an isomorphism on $\underline{\pi}_0$, and $Post^0 X$ is $(-1)$-connected. This map can even be constructed functorially on the point-set level so that the map is a cofibration and the target is fibrant.
\end{prop}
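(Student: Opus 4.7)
The plan is to construct $Post^0 X$ via a small object argument in $comm_G$, iteratively attaching $\CC$-cells to kill all higher homotopy. Let $J$ be a set of maps of the form $\CC(i) : \CC(A) \to \CC(B)$, where $i : A \to B$ ranges over positive-cofibrant models of the inclusions $G/H_+ \wedge S^n \hookrightarrow G/H_+ \wedge D^{n+1}$ for $n \geq 1$ and subgroups $H \leq G$; each such cofiber $B/A$ is then at least $1$-connected. Replacing $X$ by a cofibrant model in $comm_G$ if necessary, I would form a transfinite sequence $X = X_0 \to X_1 \to \cdots$ of pushouts in $comm_G$ against coproducts of maps in $J$, indexed by all commuting squares at each stage, and set $Post^0 X = \colim_\alpha X_\alpha$ at a sufficiently large regular cardinal; composing with a functorial fibrant replacement in $comm_G$ gives the claimed cofibration-into-fibrant point-set model.

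The key step is verifying that $Post^0 X$ is $(-1)$-connected and that $X \to Post^0 X$ induces an isomorphism on $\underline{\pi}_0$. Using the filtration of $\CC$-cell pushouts recalled earlier in this section, each stage $X_\alpha \to X_{\alpha+1}$ factors as a transfinite composition of h-cofibrations of $X_\alpha$-modules whose successive quotients take the form $X_\alpha \wedge (B/A)^{\wedge j}/\Sigma_j$ for $j \geq 1$. Since $B/A$ is $1$-connected, $(B/A)^{\wedge j}$ is at least $(2j-1)$-connected, and passing to $\Sigma_j$-orbits of a positive cofibrant spectrum does not decrease connectivity; smashing with $X_\alpha$ (which is $(-1)$-connected by the inductive hypothesis) thus yields quotients that are at least $1$-connected. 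From the associated long exact sequence, $\underline{\pi}_n(X_\alpha) \to \underline{\pi}_n(X_{\alpha+1})$ is an isomorphism for $n \leq 0$, and since sequential colimits along h-cofibrations commute with $\underline{\pi}_n$, the same holds for $X \to Post^0 X$.

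For the universal property, the small object argument guarantees that $Post^0 X$ is $1$-coconnected: every map $A \to Post^0 X$ with $(A \to B) \in J$ extends to $B \to Post^0 X$, which is exactly $\underline{\pi}_n(Post^0 X) = 0$ for $n \geq 1$. Conversely, given a $1$-coconnected commutative ring spectrum $Y$ and a map $f : X \to Y$, I would extend $f$ inductively over each pushout square: the obstruction to extending $\CC(A) \to Y$ to $\CC(B) \to Y$ is adjoint to the obstruction to extending $A \to Y$ to $B \to Y$ in $Sp_G$, which lies in $[B/A, Y]$ and vanishes by the $1$-connectivity of $B/A$ and $1$-coconnectivity of $Y$. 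Applying the same reasoning to the cylinder yields homotopy uniqueness, hence initiality in $Ho(comm_G)$; functoriality is automatic in the small object construction. The main obstacle throughout is the connectivity bookkeeping for the symmetric-power quotients $X_\alpha \wedge (B/A)^{\wedge j}/\Sigma_j$: without the filtration lemma and its positive-cofibrancy input, preserving an isomorphism on $\underline{\pi}_0$ under pushouts in $comm_G$ would be quite delicate, but with those tools in hand, the rest of the argument is the standard small-object-plus-lifting template.
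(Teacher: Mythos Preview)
Your approach is the same as the paper's: kill higher homotopy by attaching $\CC$-cells in $comm_G$, and use the pushout filtration recorded earlier in the section to check that $\underline{\pi}_0$ is preserved. The paper's argument is a two-sentence sketch; you supply the connectivity bookkeeping it omits, and your treatment of the $\underline{\pi}_0$-isomorphism via the successive quotients $X_\alpha \wedge (B/A)^{\wedge j}/\Sigma_j$ is correct.

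There is, however, a gap in your $1$-coconnectivity step. You write that ``every map $A \to Post^0 X$ with $(A \to B) \in J$ extends to $B \to Post^0 X$, which is exactly $\underline{\pi}_n(Post^0 X) = 0$ for $n \geq 1$.'' This inference is not valid as stated: the right lifting property against $J$ only says that \emph{strict} maps from $A$ extend, but since the output of your small object argument need not be fibrant, strict maps $A \to Post^0 X$ do not represent every element of the derived group $\underline{\pi}_n(Post^0 X)$. Appending a single fibrant replacement at the end does not repair this, because the fibrant replacement may fail to retain the lifting property against $J$.

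The paper's sketch handles exactly this point by ``pausing at each stage to take a fibrant replacement'': one alternates cell attachment with fibrant replacement, so that at each attachment stage the target is already fibrant and strict maps from $A$ do detect all of $\underline{\pi}_n$. An equivalent and cleaner fix within your framework is to enlarge $J$ to include the generating positive acyclic cofibrations of $comm_G$; the small object argument then produces an object that is simultaneously fibrant and $J$-injective, and your coconnectivity claim goes through verbatim. Your $\underline{\pi}_0$-preservation argument is unaffected by this change, since pushouts along acyclic cofibrations are weak equivalences.
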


To construct $Post^0 X$, one simply kills all elements of the higher homotopy groups infinitely many times by attaching null-homotopies, pausing at each stage to take a fibrant replacement. Since $comm_G$ is compactly generated, each step can be done functorially (as long as one kills \emph{all maps} representing elements of the homotopy groups rather than just one from each homotopy class).\\
\indent We need one more ingredient before we can build a commutative ring spectrum with an arbitrary $\underline{\pi}_0$.

\section{The Free Commutative Ring Spectrum on a $G$-Set}\label{sec:commgen}

In this section we show that we can realize the free Tambara functor on a represented Mackey functor as $\underline{\pi}_0$ of a commutative ring spectrum. Fix a finite $G$-set $T$, regard it as a discrete $G$-space and consider the free commutative ring spectrum on $T$, as below. Here, $F_1 S^1$ denotes the free orthogonal spectrum on $S^1$ in level $\RR^1$, which is a positive cofibrant replacement for the sphere spectrum.
\begin{align*}
	\CC (F_1 S^1 \wedge T_+) = S \vee (F_1 S^1 \wedge T_+) \vee (F_1 S^1 \wedge T_+)^{\wedge 2}/\Sigma_2 \vee ...
\end{align*}

Let $\CC_T \defeq \underline{\pi}_0 \CC (F_1 S^1 \wedge T_+)$. The inclusion of the wedge summand $F_1 S^1 \wedge T_+$ and the standard weak equivalence $F_1 S^1 \wedge T_+ \xrightarrow{\sim} \Sigma^{\infty} T_+$ induce a canonical map
\begin{align}\label{eq:geninclude}
	\iota_T : [ \spacedash, T] \to \CC_T
\end{align}

of Mackey functors, and this in turn induces a unique map of Tambara functors
\begin{align}\label{eq:gentambiso}
	\psi_T : F_T^+ \to \CC_T.
\end{align}

We shall require the following theorem; the proof will occupy the rest of this section.

\begin{thm}\label{thm:gentamb}
The map $\psi_T$ is an isomorphism of Tambara functors. That is, $\underline{\pi}_0$ of the free commutative ring spectrum on a finite $G$-set is the free Tambara functor on the Mackey functor represented by that $G$-set.
\end{thm}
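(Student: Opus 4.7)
The plan is to split both sides by a natural grading and match them degree-by-degree. The wedge decomposition $\CC(F_1 S^1 \wedge T_+) = \bigvee_{n \geq 0} (F_1 S^1 \wedge T_+)^{\wedge n}/\Sigma_n$ is a wedge of $(-1)$-connected, flat (Lemma~\ref{lem:flatness}(ii)) summands assembled along h-cofibrations, so the method of Proposition~\ref{prop:commcoprod} gives $\CC_T = \bigoplus_{n \geq 0} C_n$ where $C_n \defeq \underline{\pi}_0 \bigl((F_1 S^1 \wedge T_+)^{\wedge n}/\Sigma_n\bigr)$. On the algebraic side we already have $F_T^+ = \bigoplus_n F_T^+[n]$. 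Since $\iota_T$ lands in $C_1$ and both gradings are compatible with norms, transfers and multiplication (multiplication adds degrees, and a norm along an $\ell$-to-$1$ map scales degrees by $\ell$ on both sides), the map $\psi_T$ is grading-preserving, and it suffices to show $\psi_T : F_T^+[n] \xrightarrow{\cong} C_n$ for every $n$.

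The cases $n = 0, 1$ are immediate: $F_T^+[0] \cong \underline{A} \cong \underline{\pi}_0 S = C_0$ via~\ref{eq:ft0grad} and the unit, while $F_T^+[1] \cong [\spacedash, T] \cong \underline{\pi}_0 \Sigma^{\infty} T_+ = C_1$ via~\ref{eq:ft1grad} and the weak equivalence $F_1 S^1 \wedge T_+ \xrightarrow{\sim} \Sigma^{\infty} T_+$. For $n \geq 2$ the main computation is to identify $C_n$. Positive cofibrancy of $F_1 S^1 \wedge T_+$ lets us pass from the strict quotient to the homotopy quotient, giving $C_n \cong \underline{\pi}_0 \Sigma^{\infty}_+(E\Sigma_n \times_{\Sigma_n} T^n)$. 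The $H$-fixed locus of $E\Sigma_n \times_{\Sigma_n} T^n$ decomposes, up to homotopy, into a disjoint union of classifying spaces indexed by isomorphism classes of pairs $(U, U \to \Res_H^G T)$ with $U$ an $n$-element $H$-set, so $C_n(G/H)$ becomes the Burnside group of finite $H$-sets fibered over this indexing set; this matches the canonical $\ZZ$-basis of $F_T^+[n](G/H)$ consisting of single-orbit diagrams $T \leftarrow U \to V \to G/H$.

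The main obstacle is not constructing a bijection of abelian groups but showing it agrees with the map induced by $\psi_T$, i.e.~that the formal norms, transfers and restrictions on the $F_T^+$ side coincide with the geometric ones on the $\CC_T$ side. The plan is to argue by induction on $|G|$: Proposition~\ref{prop:resft} gives the restriction isomorphism $\Res_H^G F_T^+ \cong F_{\Res_H^G T}^+$, and $\Res_H^G$ is a symmetric monoidal left Quillen functor on orthogonal spectra, so it commutes with $\CC$ up to weak equivalence. This makes the inductive hypothesis handle all values at $G/H$ for $H \subsetneq G$. For the top level, the remark at the end of Section~\ref{sec:tambara} reduces the remaining compatibility check to verifying that $\psi_T$ commutes with the norm $n_K^G$ on elements of $\CC_T(G/K)$ that are not sums of transfers from proper subgroups; using the grading, these are detected in the linear piece, where the identification of $C_1$ with $[\spacedash, T]$ reduces everything to an explicit computation with the symmetric power filtration of $\CC$ analyzed in Section~\ref{sec:commspectra}.
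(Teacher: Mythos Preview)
Your overall architecture---grading both sides, handling $n=0,1$ directly, and inducting on $|G|$ via Proposition~\ref{prop:resft}---matches the paper exactly. Two things go wrong after that. First, the equivariant symmetric power is not modeled by $E\Sigma_n \times_{\Sigma_n} T^n$ but by $E_G\Sigma_n \times_{\Sigma_n} T^n$, where $E_G\Sigma_n$ is the universal space for the family $\FF_G[n]$ of graph subgroups of $G\times\Sigma_n$; the ordinary $E\Sigma_n$ has no $G$-action and would only compute the underlying nonequivariant homotopy type. Second, and more seriously, your last paragraph misidentifies what remains to be proved. The map $\psi_T$ is a map of Tambara functors \emph{by construction} (it is the unique Tambara extension of $\iota_T$), so there is nothing to check about its commuting with norms; the remark at the end of Section~\ref{sec:tambara} is about recognizing when a Green-functor map is a Tambara map, which is not the issue here. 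What you actually need is that $\psi_T[n](G/G)$ is a \emph{bijection}, and your fixed-point sketch only produces an abstract isomorphism $F_T^+[n](G/G) \cong C_n(G/G)$ without ever connecting it to $\psi_T$.

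The paper resolves this with a trick that avoids identifying the inverse altogether. Using the $0$-skeleton of $E_G\Sigma_n$ one shows $\psi_T[n](G/G)$ is \emph{surjective} (Lemma~\ref{lem:psisurj}): every class is hit by a transfer from a proper subgroup (handled by induction) or by the norm of a restriction of $\theta_T$. In the other direction one writes $C_n$ as a colimit over graph-subgroup orbits (equation~\ref{eq:colim}) and uses this to define an explicit surjection $\Xi : C_n(G/G) \to F_T^+[n](G/G)$. Since $F_T^+[n](G/G)$ is a finitely generated free abelian group, Lemma~\ref{lem:algtrick} (two surjections between abelian groups, one finitely generated, forces both to be isomorphisms) finishes the argument without ever checking that $\Xi$ is actually $\psi_T[n](G/G)^{-1}$. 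Your proposal contains neither of these two surjectivity arguments, and the appeal to ``an explicit computation with the symmetric power filtration'' does not substitute for them.
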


First observe that the wedge sum decomposition above induces a direct sum decomposition of Mackey functors:
\begin{align*}
	\CC_T = \bigoplus_{n \in \NN} \CC_T [n]
\end{align*}

with $\CC_T [0] \cong \underline{A}$, $\CC_T [1] \cong [\spacedash, T]$ and
\begin{align*}
	\CC_T [n] = \underline{\pi}_0 \big((F_1 S^1 \wedge T_+)^{\wedge n}/\Sigma_n\big)
\end{align*}

for $n \geq 2$. In view of~\ref{eq:ft0grad} and~\ref{eq:ft1grad}, the lemma below follows by considering the definition of the norm maps for a commutative ring spectrum.

\begin{lem}\label{lem:01comp}
The map $\psi_T$ is a graded map of Mackey functors. The components $\psi_T[0]$ and $\psi_T[1]$ are isomorphisms.
\end{lem}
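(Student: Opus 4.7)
The plan is to prove the two assertions in turn, isolating degree-preservation as the key technical point. To show $\psi_T$ is graded, I observe that under the wedge decomposition of $\CC(F_1 S^1 \wedge T_+)$ as $\bigvee_n (F_1 S^1 \wedge T_+)^{\wedge n}/\Sigma_n$, restrictions and transfers on $\underline{\pi}_0$ act summand-wise and therefore preserve the grading, whereas the multiplicative and norm structures raise it multiplicatively. Concretely, for a $G$-map $j : U \to V$ whose fibers all have size $n$, the norm $n_j$ carries a degree-$m$ class in $\CC_T(U)$ to a degree-$mn$ class in $\CC_T(V)$, because $n_j$ is built from iterated fiberwise products and the multiplication on $\CC(F_1 S^1 \wedge T_+)$ sends the $a$-th and $b$-th symmetric-power summands into the $(a+b)$-th via the evident quotient. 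Given a generator $t_k n_j r_i (\theta_T) \in F_T^+[n](X)$ (so $j$ has constant fiber size $n$), we have $\psi_T(t_k n_j r_i(\theta_T)) = t_k n_j r_i(\iota_T(\mathrm{id}_T))$, and since $\iota_T(\mathrm{id}_T) \in \CC_T[1](T)$, the summand-by-summand behavior above places this in $\CC_T[n](X)$.

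For $\psi_T[0]$, both $F_T^+[0]$ and $\CC_T[0]$ are canonically identified with the Burnside Mackey functor $\underline{A}$: on the left by~(\ref{eq:ft0grad}), and on the right because the degree-$0$ wedge summand is the sphere spectrum $S$, whose $\underline{\pi}_0$ is $\underline{A}$. Since $\underline{A}$ is the initial Tambara functor and $\psi_T$ is a map of Tambara functors, after conjugating by these identifications $\psi_T[0]$ must be the identity $\underline{A} \to \underline{A}$. For $\psi_T[1]$, both $F_T^+[1]$ and $\CC_T[1]$ are identified with the represented Mackey functor $[\spacedash, T]$, via~(\ref{eq:ft1grad}) on the left and via $\underline{\pi}_0(F_1 S^1 \wedge T_+) \cong [\spacedash, T]$ on the right. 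Chasing the universal element $\mathrm{id}_T \in [T, T]$, by construction of $\psi_T$ and the explicit forms of the two degree-$1$ isomorphisms, $\mathrm{id}_T$ corresponds to $\theta_T$ on the left, which $\psi_T$ sends to $\iota_T(\mathrm{id}_T)$, which in turn corresponds back to $\mathrm{id}_T$ under the right-hand identification. Since a map out of a represented Mackey functor is determined by the image of its universal element, this forces $\psi_T[1]$ to be the identity.

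The main obstacle is the grading claim for norms: one must unpack the spectrum-level definition of $n_j$ and verify that it interacts multiplicatively with the symmetric-power decomposition of $\CC(F_1 S^1 \wedge T_+)$. Once this degree-multiplicativity is in hand, the identifications of the degree-$0$ and degree-$1$ pieces reduce respectively to the initiality of $\underline{A}$ among Tambara functors and to a Yoneda-type calculation for represented Mackey functors.
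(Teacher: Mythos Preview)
Your proposal is correct and follows the same approach as the paper, which disposes of the lemma in a single sentence: it ``follows by considering the definition of the norm maps for a commutative ring spectrum'' together with the identifications~(\ref{eq:ft0grad}) and~(\ref{eq:ft1grad}). You have simply unpacked that remark---the degree-multiplicativity of norms under the wedge decomposition, the initiality of $\underline{A}$ for degree $0$, and the Yoneda argument for degree $1$---in explicit detail.
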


We now proceed by induction on the order of $G$. For the base case, we have the following.

\begin{lem}\label{lem:trivgroup}
The map $\psi_T$ is an isomorphism when $G$ is trivial.
\end{lem}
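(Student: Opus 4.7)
The plan is to reduce the statement to an elementary computation with commutative rings. When $G$ is trivial, $\Fin_G$ is the category of finite sets, and Mackey (resp.\ Tambara) functors are equivalent to abelian groups (resp.\ commutative rings) by evaluation at the one-point set. Under these equivalences, $[\spacedash, T]$ corresponds to the free abelian group $\ZZ\{T\}$ and its free Tambara functor to the polynomial ring $\ZZ[T]$, so it suffices to show that $\psi_T(*) : F_T^+(*) \to \CC_T(*)$ is an isomorphism of commutative rings.

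First I would verify directly from the basis description in Section~\ref{sec:tambara} that $F_T^+(*) \cong \ZZ[T]$. A basis diagram for $F_T[n](*)$ has $V$ an orbit (hence $V = *$ for trivial $G$) and $U \to V$ with fibers of size $n$, so $|U| = n$. Up to isomorphism such a diagram is just a map of finite sets $U \to T$ with $|U| = n$, i.e., a multiset of size $n$ drawn from $T$. Thus $F_T^+[n](*) \cong \ZZ\{T^n/\Sigma_n\}$, which is exactly the degree-$n$ piece of $\ZZ[T]$.

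To compute $\CC_T(*)$ I would use the wedge decomposition $\CC(F_1 S^1 \wedge T_+) = \bigvee_n (F_1 S^1 \wedge T_+)^{\wedge n}/\Sigma_n$, giving $\CC_T[n](*) = \pi_0\bigl((F_1 S^1 \wedge T_+)^{\wedge n}/\Sigma_n\bigr)$. Since $F_1 S^1 \wedge T_+$ is positive cofibrant, this strict symmetric-power quotient is weakly equivalent to the homotopy orbit spectrum $\Sigma^\infty (E\Sigma_n \times_{\Sigma_n} T^n)_+$, whose $\pi_0$ is $\ZZ\{\pi_0(E\Sigma_n \times_{\Sigma_n} T^n)\} = \ZZ\{T^n/\Sigma_n\}$. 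Hence $\CC_T[n](*)$ is a finitely generated free abelian group of the same rank $\binom{|T|+n-1}{n}$ as $F_T^+[n](*)$.

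Finally, by Lemma~\ref{lem:01comp}, $\psi_T$ is graded and an isomorphism in degrees $0$ and $1$. Since $F_T^+(*) \cong \ZZ[T]$ is generated as a ring by its degree-$1$ part, the image of the ring map $\psi_T$ is the subring of $\CC_T(*)$ generated by $\CC_T[1](*)$; in degree $n$, the multiset basis element $\{t_{i_1}, \ldots, t_{i_n}\}$ is hit by the product $\psi_T(t_{i_1}) \cdots \psi_T(t_{i_n})$, so $\psi_T$ is surjective in every degree. A surjection between finitely generated free abelian groups of the same rank is an isomorphism, completing the argument. The main obstacle is the identification $\pi_0\bigl((F_1 S^1 \wedge T_+)^{\wedge n}/\Sigma_n\bigr) \cong \ZZ\{T^n/\Sigma_n\}$, which rests on the positive model structure ensuring that the strict symmetric-power quotient has the correct (homotopy-orbit) homotopy type.
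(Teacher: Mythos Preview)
Your proposal is correct and follows essentially the same route as the paper: reduce to the observation that for trivial $G$ Tambara functors are commutative rings, invoke the positive-cofibrant fact that $(F_1 S^1 \wedge T_+)^{\wedge n}/\Sigma_n$ has the homotopy type of the $\Sigma_n$-homotopy orbits (the paper cites Lemma~15.5 of \cite{MMSS}), and conclude that $\CC_T(\ast)\cong\ZZ[T]$. The only cosmetic difference is that the paper finishes by invoking the universal property of the free commutative ring on $T$ directly, whereas you spell out the basis of $F_T^+[n](\ast)$ and close with a surjectivity-plus-equal-rank argument; once you have identified both sides as $\ZZ[T]$ with $\psi_T$ matching generators, the rank step is superfluous but harmless.
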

\begin{proof}
Mackey functors over the trivial group are just abelian groups, and Tambara functors are just commutative rings. Lemma~15.5 of \cite{MMSS} implies that the map
\begin{align*}
	{E\Sigma_n}_+ \wedge_{\Sigma_n} (F_1 S^1 \wedge T_+)^{\wedge n} \to (F_1 S^1 \wedge T_+)^{\wedge n}/\Sigma_n
\end{align*}
is a weak equivalence for all $n > 1$. It follows from this that we have $\pi_0 ((F_1 S^1 \wedge T_+)^{\wedge n}/\Sigma_n) \cong \ZZ \{ T \}^{\otimes n}/\Sigma_n$, and therefore $\CC_T$ is the free commutative ring on $T$.
\end{proof}

Proposition~\ref{prop:resft} implies the following.

\begin{lem}\label{commrest}
Let $H$ be a subgroup of $G$. If $X \in comm_G$ then
\begin{align*}
	\underline{\pi}_0 (\Res_H^G X) = \Res_H^G (\underline{\pi}_0 X)
\end{align*}
as a Tambara functor, so that
\begin{align*}
	\Res_H^G \psi_T \cong \psi_{\Res_H^G T}.
\end{align*}
\end{lem}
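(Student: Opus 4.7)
The plan is to first establish the Tambara functor identification $\underline{\pi}_0(\Res_H^G X) = \Res_H^G(\underline{\pi}_0 X)$ in full (i.e., including norms) and then deduce the compatibility $\Res_H^G \psi_T \cong \psi_{\Res_H^G T}$ via the universal property of the free Tambara functor, invoking Proposition~\ref{prop:resft}.

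For the Tambara functor identification, the underlying Mackey functor statement is immediate, since $(\Res_H^G X)^K = X^K$ for $K \subseteq H$ and all restrictions and transfers attached to maps of $H$-sets are defined identically whether $X$ is viewed as a $G$-spectrum or as an $H$-spectrum. The actual content is the compatibility of norms: for $K \subseteq L \subseteq H$, the norm $n_K^L$ on $\underline{\pi}_0 X$ is constructed (following \cite{Brun} or \cite{Stri}) from the norm spectrum construction $N_K^L$ together with the multiplication on $\Res_L^G X$, all of which depend only on $X$ viewed as an $H$-commutative ring spectrum via $\Res_H^G$. Hence the $G$-Tambara norm $n_K^L$ on $\underline{\pi}_0 X$ agrees under the canonical bijection with the $H$-Tambara norm $n_K^L$ on $\underline{\pi}_0 \Res_H^G X$.

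For the second assertion, apply the above to $X = \CC(F_1 S^1 \wedge T_+)$. The functor $\Res_H^G$ is strong symmetric monoidal and preserves colimits (it has induction as a left adjoint), so it commutes with $\CC$; moreover $T_+$ has underlying $H$-space $(\Res_H^G T)_+$ and $F_1 S^1$ restricts to itself, giving $\Res_H^G \CC_T \cong \CC_{\Res_H^G T}$ as $H$-Tambara functors. Combining this with the isomorphism $\Res_H^G F_T^+ \cong F_{\Res_H^G T}^+$ of Proposition~\ref{prop:resft}, the restricted map $\Res_H^G \psi_T$ becomes an $H$-Tambara functor map $F_{\Res_H^G T}^+ \to \CC_{\Res_H^G T}$. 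By the universal property of the free Tambara functor it suffices to verify that $\Res_H^G \psi_T$ and $\psi_{\Res_H^G T}$ agree after restriction to the Mackey subfunctor $[\spacedash, \Res_H^G T]$, where both recover the canonical inclusion $\iota_{\Res_H^G T}$; this holds because $\iota_T$ is assembled from the wedge summand inclusion and the standard weak equivalence $F_1 S^1 \wedge T_+ \xrightarrow{\sim} \Sigma^\infty T_+$, each of which is preserved naturally by $\Res_H^G$.

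The main, and essentially only, obstacle is the norm-compatibility in the first step. Once one unpacks how the Tambara structure on $\underline{\pi}_0$ of a commutative ring spectrum is assembled from norm spectra and multiplications, the compatibility with restriction to $H$ is tautological, but it is the sole non-formal ingredient; everything else reduces to adjunction manipulations together with Proposition~\ref{prop:resft}.
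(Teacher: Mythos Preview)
Your proposal is correct and follows the same route as the paper, which simply records (in the sentence preceding the lemma) that the statement is implied by Proposition~\ref{prop:resft}; you have merely spelled out the tautological first part and the universal-property argument for the second part that the paper leaves implicit. One minor slip: to justify that $\Res_H^G$ preserves colimits you should cite that it has \emph{coinduction} as a right adjoint (induction being its \emph{left} adjoint shows it preserves limits), but the conclusion is unaffected.
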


Thus, we may assume inductively that $\psi_T$ is an isomorphism on $G/H$ for all proper subgroups $H$ of $G$. Now let $n > 1$, and denote by $\FF_G[n]$ the family of subgroups of $G \times \Sigma_n$ that have trivial intersection with $1 \times \Sigma_n$. For each subgroup $H \subseteq G$ and homomorphism $\phi : H \to \Sigma_n$ the set $H^{\phi} \defeq \{(h,\phi(h)) : h \in H\}$ is in $\FF_G[n]$, and every element of $\FF_G[n]$ is of this form for unique $H$ and $\phi$. We will require some lemmas about these subgroups.

\begin{lem}\label{lem:freesigmaorbit}
Let $H^{\phi}$ be a subgroup of $G \times \Sigma_n$ in $\FF_G[n]$, and let $X$ be a $(G \times \Sigma_n)$-spectrum (or space). Then there is a natural isomorphism
\begin{align*}
	(G \times \Sigma_n / H^{\phi})_+ \wedge_{\Sigma_n} X \hspace{.5cm} \cong \hspace{.5cm} G_+ \wedge_H X^{\phi},
\end{align*}
where $X^{\phi}$ is $X$ with $H$-action multiplied by the pullback of the $\Sigma_n$-action along $\phi$.
\end{lem}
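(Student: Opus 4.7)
The plan is to construct the claimed isomorphism explicitly, exploiting the condition $H^\phi \cap (1 \times \Sigma_n) = 1$, which is precisely the statement that the $\Sigma_n$-action on $(G \times \Sigma_n)/H^\phi$ is free. A short direct count using this freeness shows that the $\Sigma_n$-orbit projection identifies $(G \times \Sigma_n / H^\phi)/\Sigma_n$ naturally with $G/H$ as a $G$-set. Thus both sides of the lemma are, as underlying pointed $G$-spaces (or spectra), built from $|G/H|$ copies of $X$ modulo twisted $H$-equivalences, and the content of the lemma is matching the twists.

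I would define the map
\[
\Phi \colon G_+ \wedge_H X^\phi \longrightarrow (G \times \Sigma_n / H^\phi)_+ \wedge_{\Sigma_n} X
\]
on representatives by $\Phi[g, x] = [[g, 1], (g, 1) \cdot x]$, where $(g, 1) \cdot x$ refers to the $(G \times \Sigma_n)$-action on $X$ restricted along $G \hookrightarrow G \times \Sigma_n$. Well-definedness reduces to checking that $\Phi[gh, x] = \Phi[g, (h, \phi(h)) \cdot x]$ for $h \in H$, which follows from the $H^\phi$-coset identity $[gh, 1] = [g, \phi(h)^{-1}]$ in $(G \times \Sigma_n)/H^\phi$ followed by a single application of the $\wedge_{\Sigma_n}$ relation to move $\phi(h)^{-1}$ into the second factor; both sides then compute to $[[g, 1], (gh, \phi(h)) \cdot x]$. $G$-equivariance is a similar short computation against the diagonal $G$-action $g_0[[g, \sigma], x] = [[g_0 g, \sigma], (g_0, 1) \cdot x]$ on the target.

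For the inverse, I would use freeness of the $\Sigma_n$-action to present every element on the right as $[[g, 1], y]$, and then set $[[g, 1], y] \mapsto [g, (g^{-1}, 1) \cdot y]$; well-definedness uses the coset identity in the same way, and the two composites are verified to be identities by direct substitution. All formulas are given level-wise and are natural in $X$, so the same argument passes verbatim from pointed $(G \times \Sigma_n)$-spaces to orthogonal $(G \times \Sigma_n)$-spectra. The main obstacle is purely bookkeeping: one must keep track of the $G$-action via $(g, 1)$, the $\Sigma_n$-action via $(1, \sigma)$, the right $H^\phi$-action defining the coset space, and the decomposition $(h, \phi(h)) = (h, 1)(1, \phi(h))$ when shuffling identifications across $\wedge_{\Sigma_n}$. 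Nothing is conceptually deep, but the computation is easy to botch if one loses track of which embedding is being used where.
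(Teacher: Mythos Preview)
Your argument is correct. The paper's proof takes a more abstract route: rather than writing down the isomorphism explicitly, it chains three standard identifications
\[
((G \times \Sigma_n / H^{\phi})_+ \wedge X)/\Sigma_n \;\cong\; ((G \times \Sigma_n)_+ \wedge_{H^{\phi}} X)/\Sigma_n \;\cong\; (\Sigma_n \backslash (G \times \Sigma_n))_+ \wedge_{H^{\phi}} X \;\cong\; G_+ \wedge_{H^{\phi}} X,
\]
the first being the usual untwisting isomorphism for induced objects, the second commuting the two quotients (legitimate precisely because of the freeness observation you begin with), and the third identifying $\Sigma_n \backslash (G \times \Sigma_n)$ with $G$; the final remark is that $H^\phi$ acts on $G$ through its projection to $H$, so $G_+ \wedge_{H^\phi} X = G_+ \wedge_H X^\phi$. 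Your explicit formula $\Phi[g,x] = [[g,1],(g,1)\cdot x]$ is exactly what one obtains by tracing an element through this chain, so the two proofs produce the same map in different clothing. The paper's version is shorter and sidesteps the coset bookkeeping you flag as the main obstacle; your version has the advantage of making the map concrete, which is handy later in the section when specific elements have to be tracked through the isomorphism.
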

\begin{proof}
The above spectrum is
\begin{align*}
	((G \times \Sigma_n / H^{\phi})_+ \wedge X)/\Sigma_n &\cong ((G \times \Sigma_n)_+ \wedge_{H^{\phi}} X)/\Sigma_n \\
	&\cong (\Sigma_n \SLASH G \times \Sigma_n)_+ \wedge_{H^{\phi}} X \cong G_+ \wedge_{H^{\phi}} X,
\end{align*}
where $H^{\phi}$ acts on $G$ via its projection onto $H$. The last spectrum above can be described equivalently as $G_+ \wedge_H X^{\phi}$.
\end{proof}

\begin{lem}\label{lem:freesigmanorm}
Let $n > 1$ and let $\phi : H \to \Sigma_n$ be a homomorphism, so that $H$ acts on $\{1,...,n\}$. Let $t_1, ... , t_m$ be a set of orbit representatives for this $H$-set. Then for $X \in Sp_H$ there is a natural isomorphism
\begin{align*}
	(X^{\wedge n})^{\phi} \cong \bigwedge_{j = 1}^{m} N_{K_j}^H (\Res_{K_j}^H X),
\end{align*}
where $K_j$ is the stabilizer of $t_j$ and $N_{K_j}^H : Sp_{K_j} \to Sp_H$ is the norm functor of~\cite{HHR}.
\end{lem}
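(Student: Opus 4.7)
The plan is to decompose the index set $\{1,\ldots,n\}$ of smash factors in $X^{\wedge n}$ according to the $H$-orbits induced by $\phi$, and then recognize each orbit's contribution as a norm.

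First I would use the decomposition of $H$-sets $\{1,\ldots,n\} \cong \bigsqcup_{j=1}^m H \cdot t_j$, where $H \cdot t_j \cong H/K_j$. Because the smash product is symmetric monoidal and its coherences group the factors freely, there is a canonical isomorphism of spectra
\[
X^{\wedge n} \;\cong\; \bigwedge_{j=1}^m \bigwedge_{i \in H \cdot t_j} X.
\]
The $\phi$-twisted $H$-action permutes factors \emph{within} each orbit block $H \cdot t_j$ (while acting internally on each copy of $X$ by the original $H$-action on $X$), so this decomposition is $H$-equivariant.

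Second, for each $j$ I would identify the block $\bigwedge_{i \in H \cdot t_j} X$ with $N_{K_j}^H(\Res_{K_j}^H X)$. Choose coset representatives $h_1 = e, h_2, \ldots, h_{[H:K_j]}$ of $H/K_j$ and label the orbit so that $h_i t_j$ enumerates $H \cdot t_j$. Under this labeling, the $\phi$-twisted $H$-action sends the factor at $h_i t_j$ to the factor at $h h_i t_j = h_{i'} t_j$ (where $h h_i = h_{i'} k$ with $k \in K_j$), and acts on contents by the original $H$-action on $X$; in particular the factor at $t_j$ inherits exactly the $K_j$-action $\Res_{K_j}^H X$. Comparing with the Hill--Hopkins--Ravenel definition of the norm as the indexed smash product $\bigwedge_{H/K_j} X$ with precisely this twisted $H$-action (see~\cite{HHR}), one reads off the claimed isomorphism, and naturality in $X$ is clear.

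The main obstacle is simply careful bookkeeping of how the $\phi$-twisted action is transported along the chosen coset labels, and making sure one uses the HHR norm in its indexed-smash-product incarnation rather than a Kan-extension definition (for the two agree on orthogonal spectra). Once the labels are set up, the lemma is immediate from the definitions.
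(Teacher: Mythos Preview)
Your proposal is correct and follows essentially the same approach as the paper: decompose $\{1,\ldots,n\}$ into $H$-orbits to split $(X^{\wedge n})^{\phi}$ as a smash of orbit blocks, then choose coset representatives to identify each block with $N_{K_j}^H(\Res_{K_j}^H X)$. The only cosmetic difference is that the paper writes the identifying isomorphism explicitly as $\bigwedge_{h_{ij}K_j} h_{ij}\cdot\,$, using the internal $H$-action on $X$ on each factor, whereas you phrase it by matching the twisted action against the HHR indexed-smash description; these are the same map.
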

\begin{proof}
Splitting the factors into orbits, we clearly have
\begin{align*}
	(X^{\wedge n})^{\phi} \cong \bigwedge_{j = 1}^{m} \big( \bigwedge_{hK_j \in H/K_j} X \big).
\end{align*}
Choosing sets of coset representatives $\{ h_{ij} K_j \}$ for each $j$, we have isomorphisms as below.
\begin{align*}
	\wedge_{h_{ij}K_j} h_{ij} \cdot : N_{K_j}^H (\Res_{K_j}^H X) \xrightarrow{\cong} \bigwedge_{h_{ij}K_j} X
\end{align*}
\end{proof}

Since norm functors preserve weak equivalences between cofibrant spectra and preserve cofibrancy (see Section~I.5 of~\cite{Ull}), as do restriction and induction functors, we obtain the following.

\begin{cor}\label{cor:extpowequiv}
Let $n > 1$ and let $H^{\phi} \in \FF_G[n]$. Then the functor $(G \times \Sigma_n / H^{\phi})_+ \wedge_{\Sigma_n} (\spacedash)^{\wedge n}$ preserves weak equivalences between cofibrant spectra.
\end{cor}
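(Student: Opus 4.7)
The plan is to string together the two previous lemmas to reduce the functor in question to a composite of operations—restriction, HHR norm, smash product, and induction—each of which is already known to preserve cofibrancy and weak equivalences between cofibrant spectra.

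First I would apply Lemma~\ref{lem:freesigmaorbit} to $X^{\wedge n}$, viewed as a $(G \times \Sigma_n)$-spectrum with $\Sigma_n$ permuting the factors, to obtain a natural isomorphism
\begin{align*}
	(G \times \Sigma_n / H^{\phi})_+ \wedge_{\Sigma_n} X^{\wedge n} \cong G_+ \wedge_H (X^{\wedge n})^{\phi}.
\end{align*}
Then, using the decomposition of $\{1,\dots,n\}$ into $H$-orbits under $\phi$ with stabilizers $K_1,\dots,K_m$, I would apply Lemma~\ref{lem:freesigmanorm} to replace $(X^{\wedge n})^{\phi}$ by $\bigwedge_{j=1}^{m} N_{K_j}^H (\Res_{K_j}^H X)$, yielding
\begin{align*}
	(G \times \Sigma_n / H^{\phi})_+ \wedge_{\Sigma_n} X^{\wedge n} \cong G_+ \wedge_H \bigwedge_{j=1}^{m} N_{K_j}^H (\Res_{K_j}^H X).
\end{align*}

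Now I would observe that this expression is built from operations that individually behave well with respect to $\SSS$-cofibrant inputs. Specifically, restriction $\Res_{K_j}^H$ preserves cofibrant spectra and weak equivalences between them, the HHR norm $N_{K_j}^H$ preserves cofibrancy and weak equivalences between cofibrant spectra (Section I.5 of~\cite{Ull}), smash products of cofibrant spectra again preserve such weak equivalences (this is the flatness statement of Lemma~\ref{lem:flatness}(i), since each $N_{K_j}^H (\Res_{K_j}^H X)$ is cofibrant and thus flat), and finally induction $G_+ \wedge_H (\spacedash)$ is a left Quillen functor and so preserves cofibrancy and weak equivalences between cofibrant objects. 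Therefore if $X \to X'$ is a weak equivalence of cofibrant $G$-spectra, then applying the chain $\Res_{K_j}^H$, $N_{K_j}^H$, smash product, and $G_+ \wedge_H (\spacedash)$ yields a weak equivalence, and by naturality of the isomorphism this is precisely what the corollary asserts.

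I do not expect a serious obstacle; the real content sits in the two preceding lemmas and in the cited cofibrancy/invariance properties of the norm functor. The only point that needs care is to verify at each stage that the intermediate spectrum remains cofibrant so that the next functor in the composite can legitimately be applied, and this is immediate from the fact that each of restriction, norm, smash, and induction preserves cofibrancy in the model structures we have fixed.
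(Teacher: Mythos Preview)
Your proposal is correct and follows exactly the approach the paper takes: the paper derives the corollary in one sentence by noting that, via Lemmas~\ref{lem:freesigmaorbit} and~\ref{lem:freesigmanorm}, the functor is a composite of restriction, norm, (iterated) smash, and induction, each of which preserves cofibrancy and weak equivalences between cofibrant spectra. Your write-up simply spells this out in more detail; the only cosmetic point is that the $X$ fed into Lemma~\ref{lem:freesigmanorm} is implicitly $\Res_H^G X$, so your $\Res_{K_j}^H X$ is really $\Res_{K_j}^G X$.
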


Now denote the universal $\FF_G[n]$-space by $E_G \Sigma_n$. Since we know from Lemma~III.8.4 of~\cite{MM} that the map
\begin{align*}
	{E_G \Sigma_n}_+ \wedge_{\Sigma_n} (F_1 S^1 \wedge T_+)^{\wedge n} \to (F_1 S^1 \wedge T_+)^{\wedge n}/\Sigma_n
\end{align*}

is a weak equivalence, we see from this corollary and Lemma~\ref{lem:freesigmaorbit} that the cells of ${E_G \Sigma_n}$ give us a "cell" structure for the symmetric power. That is, ${E_G \Sigma_n}_+ \wedge_{\Sigma_n} (F_1 S^1 \wedge T_+)^{\wedge n}$ has a filtration with quotients that are wedges of spectra, each of which is weakly equivalent to a spectrum of the form $\Sigma^{\infty} ((G \times_H (T^{\times n})^{\phi})_+ \wedge S^m)$ for some $m \geq 0$, $H \subseteq G$ and $\phi : H \to \Sigma_n$, and there is one of these for each $m$-cell of ${E_G \Sigma_n}$ of type $G \times \Sigma_n / H^{\phi}$. We can now prove the following. We use superscripts to denote skeleta.

\begin{lem}\label{lem:psisurj}
The map $\psi_T [n] (G/G)$ is surjective for each $n > 1$.
\end{lem}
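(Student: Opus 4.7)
The plan is to exploit the cellular filtration of $E_G\Sigma_n{}_+\wedge_{\Sigma_n}(F_1 S^1 \wedge T_+)^{\wedge n}$ described just before the lemma, which is weakly equivalent to $(F_1 S^1 \wedge T_+)^{\wedge n}/\Sigma_n$. Its successive quotients are wedges of $\Sigma^\infty((G \times_H (T^{\times n})^\phi)_+ \wedge S^m)$, and such a spectrum has vanishing $\pi_0^G$ whenever $m > 0$, so only the $0$-cells contribute. Using the long exact sequences in $\pi_0^G$ of the filtration, it suffices to show that every generator of $\pi_0^G \Sigma^\infty(G \times_H (T^{\times n})^\phi)_+$, after being pushed into $\CC_T[n](G/G)$, lies in the image of $\psi_T[n]$.

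For any $0$-cell with $H \subsetneq G$, the induction--restriction adjunction identifies such a generator with $t_H^G$ of an element of $\CC_T[n](G/H)$; by the inductive hypothesis combined with Lemma~\ref{commrest}, that element is already in the image of $\psi_T[n](G/H)$, and applying $t_H^G$ keeps us in the image of $\psi_T[n](G/G)$. For the $0$-cell with $H = G$, the generators of $\pi_0^G \Sigma^\infty((T^{\times n})^\phi)_+$ are orbits $G/K \to (T^{\times n})^\phi$; if $K \subsetneq G$ the same transfer-and-induction reasoning applies, leaving only generators with $K = G$, i.e., $G$-fixed points of $(T^{\times n})^\phi$, equivalently $G$-equivariant maps $f\colon S_\phi \to T$, where $S_\phi$ denotes $\{1,\ldots,n\}$ with the $G$-action induced by $\phi$.

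For these remaining generators, decompose $S_\phi = \coprod_j G/K_j$ into $\phi$-orbits. By Lemmas~\ref{lem:freesigmaorbit} and~\ref{lem:freesigmanorm}, the relevant $0$-cell inclusion, composed into $\CC(F_1 S^1 \wedge T_+)$, factors as
\begin{align*}
 \bigwedge_j N_{K_j}^G(\Res_{K_j}^G(F_1 S^1 \wedge T_+)) \to (F_1 S^1 \wedge T_+)^{\wedge n}/\Sigma_n \hookrightarrow \CC(F_1 S^1 \wedge T_+),
\end{align*}
and the generator associated to $f$ is obtained by restricting $\psi_T(\theta_T)$ along each $f_j\colon G/K_j \to T$, applying the geometric norm $N_{K_j}^G$, and multiplying. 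By the compatibility of the Hill--Hopkins--Ravenel norm with the Tambara norm on $\underline{\pi}_0$ of a commutative ring spectrum, this equals $\prod_j n_{K_j}^G(r_{f_j}(\psi_T(\theta_T))) = n_!(r_f(\psi_T(\theta_T)))$ in $\CC_T[n](G/G)$, which is precisely $\psi_T[n]$ of the diagram $T \xleftarrow{f} S_\phi \xrightarrow{=} S_\phi \xrightarrow{!} G/G$ in $F_T^+[n](G/G)$. The principal obstacle is exactly this last identification: tracing through Lemmas~\ref{lem:freesigmaorbit}--\ref{lem:freesigmanorm} and matching the cellular inclusion into $\CC(F_1 S^1 \wedge T_+)$ with the algebraic norm of the Tambara functor $\CC_T$.
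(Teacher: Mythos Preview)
Your argument follows essentially the same route as the paper's proof: reduce to the $0$-skeleton of $E_G\Sigma_n$, dispose of the summands with $H\subsetneq G$ (and then $K\subsetneq G$) via the inductive hypothesis and transfers, and for the remaining generators---$G$-fixed points $f$ of $(T^{\times n})^\phi$, equivalently $G$-maps $S_\phi\to T$---identify their image in $\CC_T[n](G/G)$ with an explicit norm expression, hence with something in the image of $\psi_T$. The paper does the last step slightly more directly, simply observing that the inclusion of the $(G,\phi)$-summand into the symmetric power is the quotient map and reading off that the image of the span $\ast\leftarrow\ast\xrightarrow{f}(T^{\times n})^\phi$ is $\psi_T$ of the diagram $T\xleftarrow{f'}\{1,\dots,n\}\to\ast\xrightarrow{=}\ast$; your detour through the orbit decomposition of $S_\phi$ and the identification $N_{K_j}^G\leftrightarrow n_{K_j}^G$ is a valid (and perhaps more conceptual) way to reach the same conclusion, since that identification is exactly how the Tambara norm on $\underline{\pi}_0$ is defined.

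There is one genuine slip at the very end. The element $n_!\bigl(r_f(\theta_T)\bigr)$ in $F_T^+(G/G)$ is \emph{not} the diagram $T\xleftarrow{f}S_\phi\xrightarrow{=}S_\phi\xrightarrow{!}G/G$ you wrote: that diagram has $U\to V$ the identity, so its fibers have cardinality~$1$ and it lies in $F_T^+[1]$, not $F_T^+[n]$. (In fact what you wrote is $t_!\bigl(r_f(\theta_T)\bigr)$, the transfer rather than the norm.) Computing the norm along $!:S_\phi\to\ast$ using the definition in Section~\ref{sec:tambara} gives instead $T\xleftarrow{f}S_\phi\to\ast\xrightarrow{=}\ast$, which does lie in $F_T^+[n]$ and agrees with the element the paper writes down. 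This does not affect the logic---what matters is only that the generator is in the image of $\psi_T$, and your formula $n_!\bigl(r_f(\psi_T(\theta_T))\bigr)=\psi_T\bigl(n_!(r_f(\theta_T))\bigr)$ already shows that---but you should correct the displayed preimage.
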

\begin{proof}
Fix $n > 1$, and consider the component $\psi_T [n] (G/G)$. By the above, the map
\begin{align*}
	{E_G \Sigma_n}^{[0]}_+ \wedge_{\Sigma_n} (F_1 S^1 \wedge T_+)^{\wedge n} \to (F_1 S^1 \wedge T_+)^{\wedge n}/\Sigma_n
\end{align*}
is surjective on $\underline{\pi}_0$. If we take
\begin{align*}
	{E_G \Sigma_n}^{[0]} = \coprod_{H \subseteq G, \phi : H \to \Sigma_n} G \times \Sigma_n / H^{\phi},
\end{align*}
we obtain
\begin{align*}
	{E_G \Sigma_n}^{[0]}_+ \wedge_{\Sigma_n} (F_1 S^1 \wedge T_+)^{\wedge n} \cong \bigvee_{H, \phi} G_+ \wedge_H  ((F_1 S^1 \wedge T_+)^{\wedge n})^{\phi}.
\end{align*}
These wedge summands are equivalent to suspension spectra of finite $G$-sets; in fact we have
\begin{align*}
	G_+ \wedge_H  ((F_1 S^1 \wedge T_+)^{\wedge n})^{\phi} \xrightarrow{\sim} \Sigma^{\infty} (G \times_H (T^{\times n})^{\phi})_+.
\end{align*}
Note that $(T^{\times n})^{\phi}$ is the $H$-set exponential $T^{\{1,...,n\}}$, where $H$ acts on $\{1,...,n\}$ through $\phi$. Now if $H \neq G$, then every element of the associated Mackey functor evaluated at $G/G$ is a sum of elements that are transferred up from proper subgroups, and so maps to an element that is in the image of $\psi_T$ by our induction hypothesis. Thus it suffices to show that elements of the form
\begin{align*}
\xymatrix{
 & \ast \ar[dl]_-{=} \ar[dr]^-{f} & \\
 \ast & & (T^{\times n})^{\phi} }
\end{align*}
map into the image of $\psi_T [n] (G/G)$, where $\phi : G \to \Sigma_n$ is a homomorphism. Now the composite
\begin{align*}
	((F_1 S^1 \wedge T_+)^{\wedge n})^{\phi} &\subseteq {E_G \Sigma_n}^{[0]}_+ \wedge_{\Sigma_n} (F_1 S^1 \wedge T_+)^{\wedge n}\\
	&\to (F_1 S^1 \wedge T_+)^{\wedge n}/\Sigma_n
\end{align*}
is simply the quotient map when we use the isomorphisms given by Lemma~\ref{lem:freesigmaorbit}, so one sees that the image of the above span coincides with the image under $\psi_T [n] (G/G)$ of the diagram
\begin{align*}
\xymatrix{
 & \{1,...,n\} \ar[dl]_-{f'} \ar[r] & \ast \ar[dr]^-{=} & \\
 T & & & \ast }
\end{align*}
in $F_T^+ [n] (G/G)$, where $f'$ is the adjoint of $f$.
\end{proof}

The following algebraic short-circuit will simplify our remaining task.

\begin{lem}\label{lem:algtrick}
Let $f : A \to B$ and $g : B \to A$ be surjective maps of abelian groups. If either group is finitely generated, then both maps are isomorphisms.
\end{lem}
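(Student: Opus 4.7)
The plan is to reduce the statement to the classical fact that a surjective endomorphism of a finitely generated (equivalently, Noetherian) $\ZZ$-module is automatically an isomorphism, applied to the composite $g \circ f$ (or $f \circ g$, depending on which group is assumed finitely generated).

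First, by the symmetry of the hypothesis, I may assume without loss of generality that $A$ is finitely generated; the argument when $B$ is finitely generated will be identical with the roles of $f$ and $g$ swapped. The composite $\phi \defeq g \circ f : A \to A$ is a composition of two surjections and hence surjective.

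Next, I verify that any surjective endomorphism $\phi$ of a finitely generated abelian group $A$ is injective, hence an isomorphism. The standard argument uses that $A$ is Noetherian as a $\ZZ$-module: the ascending chain $\ker(\phi) \subseteq \ker(\phi^2) \subseteq \cdots$ stabilizes, say $\ker(\phi^n) = \ker(\phi^{n+1})$. If $x \in \ker(\phi)$, surjectivity of $\phi^n$ gives $y \in A$ with $\phi^n(y) = x$; then $\phi^{n+1}(y) = \phi(x) = 0$ forces $y \in \ker(\phi^{n+1}) = \ker(\phi^n)$, so $x = \phi^n(y) = 0$.

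Finally, once $g \circ f$ is known to be an isomorphism, $f$ is injective (as the first factor in an injective composite), so $f$ is both injective and surjective, hence an isomorphism. Then $g = (g \circ f) \circ f^{-1}$ is a composite of isomorphisms, and is therefore also an isomorphism.

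There is no real obstacle here; the only substantive input is the Noetherian-module fact, which is entirely standard. If the convention is to cite this rather than prove it inline, the whole argument collapses to two sentences.
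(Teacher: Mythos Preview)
Your proof is correct; the paper itself states this lemma without proof, treating it as a standard algebraic fact, so your argument (reducing to the classical statement that a surjective endomorphism of a Noetherian module is injective) is exactly the kind of justification one would supply if asked to fill in the details.
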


Recall from Section~\ref{sec:tambara} that each $F_T^+ [n] (G/G)$ is a finitely generated free abelian group. Thus, to show that $\psi_T [n] (G/G)$ is an isomorphism, we need only construct a surjective map
\begin{align*}
	\Xi : \CC_T [n] (G/G) \to F_T^+ [n] (G/G).
\end{align*}

There is no need to show that the maps are inverses. Now we have
\begin{align*}
	\CC_T [n] \cong \underline{\pi}_0 \Sigma^{\infty} ({E_G \Sigma_n}^{[1]} \times_{\Sigma_n} T^{\times n})_+,
\end{align*}

and we can take
\begin{align*}
	{E_G \Sigma_n}^{[1]} = \hocoeq \big( \coprod G \times \Sigma_n / L^{\lambda} \rightrightarrows \coprod_{H,\phi} G \times \Sigma_n / H^{\phi} \big),
\end{align*}

where the first coproduct is over all pairs of distinct maps from an orbit of this type to ${E_G \Sigma_n}^{[0]}$. We obtain an isomorphism as below,
\begin{align}\label{eq:colim}
	\CC_T [n] \cong \colim_{G \times \Sigma_n / H^{\phi}} [\spacedash, G \times_H (T^{\times n})^{\phi}]
\end{align}

where the colimit is over the full subcategory of orbits in $\Fin_{G \times \Sigma_n}$ with stabilizers in $\FF_G[n]$. To understand this colimit, we need the following lemmas.

\begin{lem}\label{lem:orbitmaps}
Let $H$ and $L$ be subgroups of $G$, and let $\phi : H \to \Sigma_n$ and $\lambda : L \to \Sigma_n$ be homomorphisms. There is a map in $\Fin_{G \times \Sigma_n}$
\begin{align*}
	G \times \Sigma_n / L^{\lambda} \to G \times \Sigma_n / H^{\phi}
\end{align*}
sending the identity coset to $(g,\sigma)H^{\phi}$ if and only if $L \subseteq gHg^{-1}$ and
\begin{align*}
	\lambda(l) = \sigma \phi(g^{-1} l g) \sigma^{-1}
\end{align*}
for all $l \in L$.
\end{lem}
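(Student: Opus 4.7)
The plan is to reduce to the standard classification of equivariant orbit maps and then perform a direct conjugation calculation in $G \times \Sigma_n$. Recall that for any group $\Gamma$ and subgroups $K, J \subseteq \Gamma$, there is a (necessarily unique) $\Gamma$-equivariant map $\Gamma/K \to \Gamma/J$ sending the identity coset to $\gamma J$ if and only if $K \subseteq \gamma J \gamma^{-1}$. I would apply this with $\Gamma = G \times \Sigma_n$, $K = L^{\lambda}$, $J = H^{\phi}$, and $\gamma = (g, \sigma)$. This reduces the entire statement to showing that the containment $L^{\lambda} \subseteq (g, \sigma) H^{\phi} (g, \sigma)^{-1}$ is equivalent to the two conditions displayed in the lemma.

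The verification is then purely computational. For any $h \in H$ one has $(g, \sigma)(h, \phi(h))(g, \sigma)^{-1} = (g h g^{-1}, \sigma \phi(h) \sigma^{-1})$, so reindexing the parameter by $h' = g h g^{-1}$ gives $(g, \sigma) H^{\phi} (g, \sigma)^{-1} = \{(h', \sigma \phi(g^{-1} h' g) \sigma^{-1}) : h' \in g H g^{-1}\}$. The inclusion $L^{\lambda} \subseteq (g, \sigma) H^{\phi} (g, \sigma)^{-1}$ therefore says that for every $l \in L$ the pair $(l, \lambda(l))$ lies in the right-hand side, which is exactly the conjunction of $l \in g H g^{-1}$ and $\lambda(l) = \sigma \phi(g^{-1} l g) \sigma^{-1}$. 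Since these must hold for all $l \in L$, they recover exactly the two conditions in the statement, with the first condition being precisely what is needed for the expression $\phi(g^{-1} l g)$ in the second to be defined.

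There is no genuine obstacle here beyond careful bookkeeping; the lemma is a formal unpacking of the orbit-map correspondence for $G \times \Sigma_n$ in terms of graph subgroups. The only subtlety worth flagging in the writeup is the direction of the implications: one direction asserts that existence of a map with the specified value on the identity coset forces $L^{\lambda}$ to lie in the conjugate of $H^{\phi}$, and the other asserts that once this containment holds, the assignment $(a,\tau) L^{\lambda} \mapsto (a, \tau)(g,\sigma) H^{\phi}$ is well-defined and $(G \times \Sigma_n)$-equivariant.
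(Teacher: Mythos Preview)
Your argument is correct. The paper actually states this lemma without proof, treating it as an elementary fact about orbit maps; your approach---reducing to the standard criterion that a $\Gamma$-map $\Gamma/K \to \Gamma/J$ sending the identity coset to $\gamma J$ exists iff $K \subseteq \gamma J \gamma^{-1}$, and then computing the conjugate $(g,\sigma)H^{\phi}(g,\sigma)^{-1}$ explicitly---is precisely the intended justification.
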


\begin{lem}\label{lem:orbitmapsinduce}
Let $H$ and $L$ be subgroups of $G$, and let $\phi : H \to \Sigma_n$ and $\lambda : L \to \Sigma_n$ be homomorphisms. Suppose there is a map in $\Fin_{G \times \Sigma_n}$
\begin{align*}
	f : G \times \Sigma_n / L^{\lambda} \to G \times \Sigma_n / H^{\phi}
\end{align*}
sending the identity coset to $(g,\sigma)H^{\phi}$. Then the following diagram commutes, where the vertical isomorphisms are given by Lemma~\ref{lem:freesigmaorbit}.
\begin{align*}
\xymatrix{
 (G \times \Sigma_n / L^{\lambda}) \times_{\Sigma_n} T^{\times n} \ar[d]_-{\cong} \ar[rr]^-{f \times_{\Sigma_n} Id} & & (G \times \Sigma_n / H^{\phi}) \times_{\Sigma_n} T^{\times n} \ar[d]^-{\cong} \\
 G \times_{L} (T^{\times n})^{\lambda} \ar[rr]_-{[k,\{t_j\}_j] \mapsto [kg, \{ g^{-1}t_{\sigma(j)} \}_j]} & & G \times_{H} (T^{\times n})^{\phi} }
\end{align*}
\end{lem}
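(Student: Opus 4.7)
My plan is to make the isomorphisms of Lemma~\ref{lem:freesigmaorbit} explicit and then chase a single generating element around the square. Tracing through the four-step identification in the proof of Lemma~\ref{lem:freesigmaorbit}, one obtains
\[
\Phi_{K,\kappa}: (G \times \Sigma_n / K^{\kappa}) \times_{\Sigma_n} T^{\times n} \xrightarrow{\cong} G \times_K (T^{\times n})^{\kappa}, \qquad [(a,\tau) K^{\kappa}, t] \mapsto [a, a^{-1}\tau^{-1} t],
\]
with inverse $[a,y] \mapsto [(a,e) K^{\kappa}, ay]$. The factor $a^{-1}$ is essential for well-definedness: since $(T^{\times n})^{\kappa}$ carries the $K$-action combining the restricted diagonal $G$-action on $T^{\times n}$ with the $\kappa$-pulled-back $\Sigma_n$-permutation, the naive formula $[(a,\tau) K^{\kappa}, t] \mapsto [a, \tau^{-1}t]$ would fail the coset relation $(a,\tau) K^{\kappa} = (ak, \tau\kappa(k)) K^{\kappa}$ by exactly the $G$-action of $k$ on $t$, and the $a^{-1}$ absorbs this discrepancy through the balanced-product relation $[ak,y] = [a, k \cdot_{\kappa} y]$.

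Both paths in the diagram are $G$-equivariant (the claimed bottom map acts only by left multiplication on $k$, leaving $g$, $\sigma$, and the $t_j$ untouched), and every element of the source can be written as $g' \cdot [e L^{\lambda}, s]$ for some $g' \in G$, $s \in T^{\times n}$ (apply the $\Sigma_n$-diagonal relation to clear the second coordinate of the coset, then recognize the $G$-action on the first factor). Hence it suffices to verify commutativity on $[e L^{\lambda}, \{t_j\}_j]$. Top-then-right: $f \times_{\Sigma_n} \mathrm{Id}$ sends this to $[(g,\sigma) H^{\phi}, \{t_j\}_j]$, and then $\Phi_{H,\phi}$ yields $[g,\, g^{-1}\sigma^{-1}\{t_j\}_j] = [g, \{g^{-1} t_{\sigma(j)}\}_j]$ under the convention $\sigma \cdot (t_1,\ldots,t_n) = (t_{\sigma^{-1}(1)},\ldots,t_{\sigma^{-1}(n)})$. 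Left-then-bottom: $\Phi_{L,\lambda}$ sends $[eL^{\lambda}, \{t_j\}_j]$ to $[e, \{t_j\}_j]$, and the displayed bottom map then produces $[eg, \{g^{-1} t_{\sigma(j)}\}_j] = [g, \{g^{-1} t_{\sigma(j)}\}_j]$. The two outputs agree, so the diagram commutes.

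The only real obstacle is pinning down the $a^{-1}$ correction in the formula for $\Phi$; once that is in hand the verification is a routine unwinding of cosets and permutations, and the identities from Lemma~\ref{lem:orbitmaps} (such as $\lambda(l) = \sigma \phi(g^{-1} l g) \sigma^{-1}$) do not appear in this particular commutativity check — they enter only when one wants to check that the bottom map is well-defined, which is not part of the statement of this lemma.
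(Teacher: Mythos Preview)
The paper states Lemma~\ref{lem:orbitmapsinduce} without proof (it is one of several verification lemmas in Section~\ref{sec:commgen} left to the reader), so there is no argument in the paper to compare yours against. Your element chase is correct: the explicit formula $\Phi_{K,\kappa}\bigl([(a,\tau)K^{\kappa},t]\bigr) = [a,\,a^{-1}\tau^{-1}t]$ does follow from the chain of identifications in the proof of Lemma~\ref{lem:freesigmaorbit}, the $a^{-1}$ arising exactly where you locate it (from the standard isomorphism $(\Gamma/\Delta)\times X \cong \Gamma\times_{\Delta}X$, $(\gamma\Delta,x)\mapsto[\gamma,\gamma^{-1}x]$, applied with $\Gamma = G\times\Sigma_n$ and the diagonal action on $X$). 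The reduction to the single generator $[eL^{\lambda},\{t_j\}_j]$ is legitimate because every element of the source is a $G$-translate of such a class, and all four maps are visibly $G$-equivariant; the computation on that generator is clean and matches.

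One small remark on your closing comment: you say that well-definedness of the bottom formula ``is not part of the statement of this lemma'' and would require Lemma~\ref{lem:orbitmaps}. In fact your argument already gives it for free. Since the other three sides of the square are well-defined and $\Phi_{L,\lambda}$ is an isomorphism, the composite $\Phi_{H,\phi}\circ(f\times_{\Sigma_n}\mathrm{Id})\circ\Phi_{L,\lambda}^{-1}$ is a well-defined $G$-map, and you have shown it agrees with the displayed formula on a generating set, hence everywhere. So the formula descends to the quotient automatically, without any direct appeal to the relations in Lemma~\ref{lem:orbitmaps}.
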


Recalling that $(T^{\times n})^{\phi}$ is the $H$-set exponential $T^{\{1,...,n\}}$, we have an evaluation $H$-map
\begin{align*}
	eval : (T^{\times n})^{\phi} \times \{ 1,...,n \} \to T.
\end{align*}

We denote the adjoint $G$-map
\begin{align*}
	G \times _H ((T^{\times n})^{\phi} \times \{ 1,...,n \}) \to T
\end{align*}

by $eval$ also. We also have a projection map
\begin{align*}
	proj : (T^{\times n})^{\phi} \times \{ 1,...,n \} \to (T^{\times n})^{\phi}.
\end{align*}

We now define maps
\begin{align*}
	\Xi_{H,\phi} : [\spacedash, G \times_{H} (T^{\times n})^{\phi}] \to F_T^+ [n]
\end{align*}

by the diagrams below.
\begin{align*}
\xymatrix{
 G \times_{H} ((T^{\times n})^{\phi} \times \{ 1,...,n \}) \ar[d]_-{eval} \ar[rr]^-{G \times_H proj} & & G \times_{H} (T^{\times n})^{\phi} \ar[d]^-{=} \\
 T & & G \times_{H} (T^{\times n})^{\phi} }
\end{align*}

Checking that the $\Xi_{H,\phi}$ induce a well-defined map of Mackey functors
\begin{align*}
	\Xi \defeq \colim \Xi_{H,\phi} : \CC_T [n] \to F_T^+ [n]
\end{align*}

under the identification~\ref{eq:colim} amounts to verifying the following lemma.

\begin{lem}\label{lem:mapcttoft}
Let $H$ and $L$ be subgroups of $G$, and let $\phi : H \to \Sigma_n$ and $\lambda : L \to \Sigma_n$ be homomorphisms. Suppose there is a map in $\Fin_{G \times \Sigma_n}$
\begin{align*}
	f : G \times \Sigma_n / L^{\lambda} \to G \times \Sigma_n / H^{\phi}
\end{align*}
sending the identity coset to $(g,\sigma)H^{\phi}$. Then there is a well-defined map as below.
\begin{align*}
	G \times_{L} ((T^{\times n})^{\lambda} \times \{ 1,...,n \}) &\to G \times_{H} ((T^{\times n})^{\phi} \times \{ 1,...,n \}) \\
	[k, \{ t_j \}_j, m] &\mapsto [kg, \{ g^{-1}t_{\sigma(j)} \}_j, \sigma^{-1}(m)]
\end{align*}
This map makes the diagram
\begin{align*}
\xymatrix{
 & G \times_{H} ((T^{\times n})^{\phi} \times \{ 1,...,n \}) \ar[dl]_-{eval} \ar[rr]^-{G \times_H proj} && G \times_{H} (T^{\times n})^{\phi} \\
 T & G \times_{L} ((T^{\times n})^{\lambda} \times \{ 1,...,n \}) \ar[l]^-{eval} \ar[u] \ar[rr]_-{G \times_{L} proj} && G \times_{L} (T^{\times n})^{\lambda} \ar[u] }
\end{align*}
commute and the square a pullback, where the right vertical map is given by Lemma~\ref{lem:orbitmapsinduce}.
\end{lem}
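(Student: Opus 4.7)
The plan is to verify, in order, well-definedness of the formula, commutativity of the diagram, and the pullback property of the square, relying throughout on the two constraints supplied by Lemma~\ref{lem:orbitmaps}: $L \subseteq gHg^{-1}$, and $\lambda(l) = \sigma\phi(g^{-1}lg)\sigma^{-1}$ for all $l \in L$. The first ensures that $g^{-1}lg$ lies in $H$, so that the second identity is meaningful.

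First I would check that the formula descends to the $L$-orbit. Evaluating it on a second representative $(kl^{-1}, \{t_{\lambda(l)^{-1}(j)}\}_j, \lambda(l)(m))$ of the original class and comparing with its value on $(k, \{t_j\}_j, m)$, one finds that the two images differ by the left $H$-action of $h \defeq g^{-1}lg \in H$; matching both the tuple coordinate and the $\{1,\dots,n\}$-coordinate reduces exactly to the identity $\phi(h) = \sigma^{-1}\lambda(l)\sigma$.

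Next the diagram. The square commutes on the nose because both routes produce $[kg, \{g^{-1}t_{\sigma(j)}\}_j]$, which matches the vertical map from Lemma~\ref{lem:orbitmapsinduce}. For the triangle, the upper route sends $[k, \{t_j\}_j, m]$ to $kg \cdot g^{-1}t_{\sigma(\sigma^{-1}(m))} = k \cdot t_m$, which is precisely the image of the bottom representative under $eval$.

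I expect the main obstacle to be the pullback property, although it too reduces to careful bookkeeping. Given representatives $(u, \{s_j\}_j, m')$ of an element of $G \times_H ((T^{\times n})^\phi \times \{1,\dots,n\})$ and $(k', \{t_j\}_j)$ of an element of $G \times_L (T^{\times n})^\lambda$ whose images in $G \times_H (T^{\times n})^\phi$ agree, the matching condition supplies a unique $h \in H$ with $u = k'gh$ and $h^{-1}\{g^{-1}t_{\sigma(j)}\}_j = \{s_j\}_j$. Setting $m \defeq \sigma\phi(h)(m')$ then yields a lift $[k', \{t_j\}_j, m]$ whose image equals $[u, \{s_j\}_j, m']$. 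Uniqueness of $m$ is forced by the $\{1,\dots,n\}$-coordinate, and a final check---using the same identity from Lemma~\ref{lem:orbitmaps} once more---confirms that replacing $(k', \{t_j\}_j)$ by an $L$-equivalent representative transforms $m$ by the corresponding $L$-action, so the lift is well-defined as a class.
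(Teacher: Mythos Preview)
The paper does not actually supply a proof of this lemma; it is introduced with ``Checking that the $\Xi_{H,\phi}$ induce a well-defined map \dots\ amounts to verifying the following lemma'' and then the text moves on. Your direct verification---well-definedness, commutativity of the two cells, and the pullback property---is exactly the routine check the author is leaving to the reader, and your argument is correct in structure and detail.

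One small slip: in the well-definedness step you write the second representative as $(kl^{-1}, \{t_{\lambda(l)^{-1}(j)}\}_j, \lambda(l)(m))$, but the $L$-action on $(T^{\times n})^{\lambda}$ includes the action of $L\subseteq G$ on $T$ itself, so the tuple coordinate should be $\{l\,t_{\lambda(l)^{-1}(j)}\}_j$. With that correction the comparison with the image of the original representative still reduces to $\phi(g^{-1}lg)=\sigma^{-1}\lambda(l)\sigma$, exactly as you say. For the pullback, an alternative (and slightly quicker) way to phrase your argument is to note that both horizontal maps have fibers canonically identified with $\{1,\dots,n\}$ and that the left vertical map restricts to the bijection $\sigma^{-1}$ on each fiber; together with commutativity of the square this forces the pullback property.
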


It remains to show that the resulting map $\Xi (G/G)$ is surjective. It suffices to show that any diagram
\begin{align*}
\xymatrix{
 & U \ar[dl] \ar[r] & V \ar[dr] & \\
 T & & & \ast }
\end{align*}

with $V$ an orbit is in the image. We may assume $V = G/H$ for some subgroup $H$ of $G$. Then $U \cong G \times_H U'$ for some $H$-set $U'$ with $n$ elements. Now choose a bijection
\begin{align*}
	\{ 1, ... , n \} \xrightarrow{\cong} U'
\end{align*}

and let $\phi : H \to \Sigma_n$ be the induced action map. We now see that our diagram is isomorphic to one of the form below.
\begin{align*}
\xymatrix{
 & G \times_H \{ 1, ... , n \} \ar[dl]_-{f} \ar[rr]^-{G \times_H proj} && G/H \ar[dr] & \\
 T & & & & \ast }
\end{align*}

Now $f$ is adjoint to an $H$-map $\{ 1, ... , n \} \to T$, which in turn is adjoint to an $H$-map
\begin{align*}
	f' : \ast \to (T^{\times n})^{\phi}.
\end{align*}

One can now check that our diagram is the image under $\Xi$ of the span
\begin{align*}
\xymatrix{
 & G/H \ar[dl] \ar[dr]^-{G \times_H f'} & \\
 \ast & & G \times_H (T^{\times n})^{\phi} }
\end{align*}

by verifying the following lemma.

\begin{lem}\label{lem:mapbacksurj}
The following diagram commutes, and the square is a pullback.
\begin{align*}
\xymatrix{
 & G \times_H ((T^{\times n})^{\phi} \times \{ 1, ... , n \}) \ar[dl]_-{eval} \ar[rr]^-{G \times_H proj} && G \times_H (T^{\times n})^{\phi} \\
 T & G \times_H \{ 1, ... , n \} \ar[l]^-{f} \ar[u]_-{G \times_H (f' \times Id)} \ar[rr]_-{G \times_H proj} && G/H \ar[u]_-{G \times_H f'} }
\end{align*}
\end{lem}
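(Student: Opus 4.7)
The plan is to verify two things: commutativity of the two sub-diagrams (the left triangle and the right square), and the pullback property of the right square. Both reduce to bookkeeping once the adjunctions are unpacked. Write $\hat{f} : \{ 1, \ldots , n \} \to T$ for the $H$-map adjoint to the $G$-map $f$, so that $f[g, m] = g \cdot \hat{f}(m)$. By its construction, $f' : \ast \to (T^{\times n})^{\phi} = T^{\{ 1, \ldots , n \}}$ sends the unique point to the tuple $\hat{f}$, and consequently $G \times_H (f' \times Id)[g, m] = [g, \hat{f}, m]$.

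Commutativity of each sub-diagram is then a one-line computation. For the triangle, $eval \circ (G \times_H (f' \times Id))[g, m] = eval[g, \hat{f}, m] = g \cdot \hat{f}(m) = f[g, m]$. For the square, both composites send $[g, m]$ to $[g, \hat{f}] \in G \times_H (T^{\times n})^{\phi}$.

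For the pullback property, the conceptual route is to observe that the square in the statement is obtained by applying $G \times_H (-)$ to the $H$-set square with top edge $m \mapsto (\hat{f}, m)$, bottom edge $f'$, and vertical edges the evident projections; this $H$-set square is visibly a pullback. Since $[G:H] < \infty$, induction from $H$ to $G$ is biadjoint to restriction and hence preserves finite limits, so the $G$-set square is a pullback as well. A hands-on check works equally well: the comparison map sends $[g, m]$ to $([g], [g, \hat{f}, m])$, and an inverse is produced by noting that any $([g_1], [g_2, x, m])$ in the pullback satisfies $[g_1, \hat{f}] = [g_2, x]$, forcing $g_2 = g_1 k$ and $x = k^{-1} \hat{f}$ for $k = g_1^{-1} g_2 \in H$; then $[g_2, x, m]$ rewrites as $[g_1, \hat{f}, \phi(k) m]$, giving the inverse assignment $([g_1], [g_2, x, m]) \mapsto [g_1, \phi(k) m]$ in $G \times_H \{ 1, \ldots , n \}$. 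Well-definedness under change of representatives and under replacing $k$ by an element of $\mathrm{Stab}_H(\hat{f}) \cdot k$ is a short check using that the $H$-action on $\{ 1, \ldots , n \}$ is via $\phi$. The only substantive obstacle is keeping track of this $\phi$-twisted action through the $G \times_H (-)$ quotient, which is purely mechanical.
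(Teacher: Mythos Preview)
Your argument is essentially complete and correct; the paper itself states this lemma without proof, leaving it as a routine verification, so what you have written is exactly the sort of check the author had in mind. The commutativity computations and the explicit inverse for the comparison map are both fine.

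One correction: your conceptual justification that $G \times_H(-)$ preserves pullbacks because ``induction is biadjoint to restriction'' is not valid for $G$-\emph{sets}. That statement holds for $G$-modules (or $G$-spectra), but for $G$-sets induction $G \times_H(-)$ and coinduction $\mathrm{Map}_H(G,-)$ are genuinely different functors (take $H=1$ and $X=\ast$: induction gives the free orbit $G$, coinduction gives a point). Nonetheless the conclusion is true: $G \times_H(-)$ does preserve pullbacks, for instance because every map $G \times_H X \to G \times_H Y$ lies over $G/H$ and decomposes coset-by-coset into copies of the original $H$-map, so fiber products are computed cosetwise. The paper in fact records this in Section~\ref{sec:tambara} (``the induction functor $G \times_H (\spacedash)$ preserves colimits, pullback diagrams and exponential diagrams''). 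Since your hands-on inverse construction is independent of this and is correct, the overall proof stands; just replace or remove the biadjoint remark.
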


We have completed the proof of Theorem~\ref{thm:gentamb}.

\indent \emph{Remark:} It should not be difficult to verify that $\Xi$ is actually the inverse of $\psi_T [n]$. However, in view of Lemma~\ref{lem:algtrick}, it would not matter even if this turned out not to be true.

\section{The Main Theorems}\label{sec:main}

We can now quickly derive our main theorems.

\begin{thm}\label{thm:getanything}
Every Tambara functor occurs as $\underline{\pi}_0$ of a commutative ring spectrum. In fact, there even exists a functor
\begin{align*}
	EM : Tamb(G) \to comm_G
\end{align*}
taking cofibrant, fibrant and Eilenberg MacLane values such that the composite $\underline{\pi}_0 \circ EM$ is naturally isomorphic to the identity.
\end{thm}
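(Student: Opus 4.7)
The plan is to execute the generators-and-relations strategy outlined in Section~\ref{intro}: present $\underline{R}$ as a pushout of free Tambara functors on represented Mackey functors, realize each piece as $\underline{\pi}_0$ of a commutative ring spectrum via Theorem~\ref{thm:gentamb}, Proposition~\ref{prop:commcoprod} and Proposition~\ref{prop:commpushout}, and apply $Post^0$ to kill the higher homotopy groups.

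For a Tambara functor $\underline{R}$, I would form the coproducts in $comm_G$
\begin{align*}
X(\underline{R}) &\defeq \coprod_{H \subseteq G,\, x \in \underline{R}(G/H)} \CC (F_1 S^1 \wedge (G/H)_+), \\
Y(\underline{R}) &\defeq \coprod_{K \subseteq G,\, y \in \ker f (G/K)} \CC (F_1 S^1 \wedge (G/K)_+),
\end{align*}
where $f : \underline{\pi}_0 X(\underline{R}) \to \underline{R}$ is the canonical surjection (well-defined since Theorem~\ref{thm:gentamb} and Proposition~\ref{prop:commcoprod} identify $\underline{\pi}_0 X(\underline{R})$ with $\coprod_{H,x} \TT ([\spacedash, G/H])$). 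By Theorem~\ref{thm:gentamb} there is a natural map $Y(\underline{R}) \to X(\underline{R})$ realizing $\coprod y_*$ on $\underline{\pi}_0$, and the augmentation $\epsilon : Y(\underline{R}) \to S$ realizes $\coprod 0_*$ into $\underline{A} = \underline{\pi}_0 S$. I would factor $\epsilon$ as a cofibration $Y(\underline{R}) \hookrightarrow Y'(\underline{R})$ followed by a weak equivalence $Y'(\underline{R}) \xrightarrow{\sim} S$, and form the pushout $P(\underline{R}) \defeq Y'(\underline{R}) \sqcup_{Y(\underline{R})} X(\underline{R})$ in $comm_G$. Since all four objects are cofibrant and $(-1)$-connected, Proposition~\ref{prop:commpushout} identifies $\underline{\pi}_0 P(\underline{R})$ with the pushout in $Tamb(G)$, which is $\underline{R}$ by construction of the indexing data.

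Setting $EM(\underline{R}) \defeq Post^0 P(\underline{R})$, constructed functorially via Proposition~\ref{prop:commpost0}, produces a cofibrant, fibrant, $(-1)$-connected and $1$-coconnected commutative ring spectrum with $\underline{\pi}_0 EM(\underline{R}) \cong \underline{R}$, hence Eilenberg MacLane.

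The main obstacle is achieving strict functoriality in $\underline{R}$, since the indexing of $Y(\underline{R})$ by $\ker f$ is not manifestly preserved by a map $g : \underline{R} \to \underline{R}'$, and since lifting elements of $\underline{\pi}_0 X(\underline{R})$ to maps of spectra involves choices. I would address this by enlarging the $Y$-indexing to all pairs $(K, y)$ with $y \in \underline{\pi}_0 X(\underline{R}) (G/K)$ and using a two-stage pushout to impose the relations $y \sim f(y)$ (equivalently, using the kernel-pair presentation of $\underline{R}$). Then $g$ induces maps of indexing sets via $(H, x) \mapsto (H, g(x))$, giving strictly natural maps $X(\underline{R}) \to X(\underline{R}')$ and $Y(\underline{R}) \to Y(\underline{R}')$ that commute with all the structure maps in the pushout diagram. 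Composing with the functorial $Post^0$ yields the required $EM$, with the isomorphism $\underline{\pi}_0 \circ EM \cong Id$ natural by construction.
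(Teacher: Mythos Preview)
Your overall strategy matches the paper's exactly: build generators $X(\underline{R})$, attach relations to kill the kernel, then apply $Post^0$. The variant of pushing out along a cofibrant replacement of the augmentation $Y \to S$ instead of coning off (the paper maps $\CC(F_1 S^1 \wedge G/K_+)$ into $\CC(F_1 S^1 \wedge G/K_+ \wedge I)$) is harmless; both compute the same pushout in $Tamb(G)$ on $\underline{\pi}_0$.

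There is, however, a real gap in how you produce the map $Y(\underline{R}) \to X(\underline{R})$ and in your functoriality fix. Theorem~\ref{thm:gentamb} identifies $\underline{\pi}_0$; it does not hand you point-set maps. An element $y \in \underline{\pi}_0 X(\underline{R})(G/K)$ is only a homotopy class, and to get an actual map $F_1 S^1 \wedge G/K_+ \to X(\underline{R})$ you need $X(\underline{R})$ to be fibrant (the paper inserts a functorial fibrant replacement $EM_1$ for exactly this reason) and then you must \emph{choose} a representative. Your proposed enlargement of the $Y$-indexing to all of $\underline{\pi}_0 X(\underline{R})(G/K)$, or to a kernel-pair, does not escape this: you still have to lift each $y$ to a point-set map, and there is no natural choice. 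The paper's resolution is different and is the key trick you are missing: after fibrant replacement, index $Y$ not by homotopy classes but by \emph{all point-set maps} $F_1 S^1 \wedge G/K_+ \to EM_1(\underline{R})$ representing kernel elements. Then a morphism $g : \underline{R} \to \underline{R}'$ induces $EM_1(g)$, which carries each such point-set map to another by postcomposition, and functoriality is automatic.
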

\begin{proof}
Let $\underline{R}$ be a Tambara functor. For each subgroup $H$ of $G$ and each $x \in \underline{R} (G/H)$ we have a corresponding map $x : [\spacedash, G/H] \to \underline{R}$ of Mackey functors, and hence an induced map
\begin{align*}
	F_{G/H}^+ \xrightarrow{x_*} \underline{R}
\end{align*}
of Tambara functors. Define a functor
\begin{align*}
	EM_0 : Tamb(G) \to comm_G
\end{align*}
by
\begin{align*}
	EM_0 (\underline{R}) \defeq \coprod_{H,x} \CC (F_1 S^1 \wedge G/H_+),
\end{align*}
so that we have a natural isomorphism
\begin{align*}
	\underline{\pi}_0 EM_0 (\underline{R}) \cong \coprod_{H,x} F_{G/H}^+
\end{align*}
by Proposition~\ref{prop:commcoprod} and Theorem~\ref{thm:gentamb}, and hence a natural surjection
\begin{align*}
	\coprod_{H,x} x_* : \underline{\pi}_0 EM_0 (\underline{R}) \to \underline{R}.
\end{align*}
Next, apply a functorial fibrant replacement to $EM_0$ to obtain $EM_1$; we may identify $\underline{\pi}_0 EM_1$ with $\underline{\pi}_0 EM_0$ (canonically). Now, for each subgroup $K$ of $G$ and $y$ in the kernel of the above map at $G/K$ there is a corresponding map $y : [\spacedash, G/K] \to \underline{\pi}_0 EM_1 (\underline{R})$; choose a representing map of spectra
\begin{align*}
	y : F_1 S^1 \wedge G/K_+ \to EM_1 (\underline{R})
\end{align*}
and let the induced map in $comm_G$ be
\begin{align*}
	y_* : \CC (F_1 S^1 \wedge G/K_+) \to EM_1 (\underline{R}).
\end{align*}
Now define $EM_2 (\underline{R})$ by the pushout diagram below, where we give the unit interval $I = [0,1]$ the basepoint $1$.
\begin{align*}
\xymatrix{
 \coprod_{K,y} \CC (F_1 S^1 \wedge G/K_+) \ar[d] \ar[r]^-{\coprod_{K,y} y_*} & EM_1 (\underline{R}) \ar[d] \\
 \coprod_{K,y} \CC (F_1 S^1 \wedge G/K_+ \wedge I) \ar[r] & EM_2 (\underline{R}) }
\end{align*}
By Propositions~\ref{prop:commcoprod} and~\ref{prop:commpushout} and Theorem~\ref{thm:gentamb} we see that the map $\underline{\pi}_0 EM_1 (\underline{R}) \to \underline{R}$ descends to an isomorphism
\begin{align*}
	\underline{\pi}_0 EM_2 (\underline{R}) \xrightarrow{\cong} \underline{R}.
\end{align*}
We can make this construction functorial by using \emph{all maps} $y_*$ instead of just one from each homotopy class. We now apply Proposition~\ref{prop:commpost0} to the output of $EM_2$ to obtain our functor $EM$.
\end{proof}

\begin{thm}\label{thm:homhocomm}
If $X \in comm_G$ is $(-1)$-connected and $H\underline{R} \in comm_G$ is Eilenberg MacLane then the functor $\underline{\pi}_0$ induces an isomorphism
\begin{align*}
	(\underline{\pi}_0)_* : Hom_{Ho(comm_G)} (X, H\underline{R}) \xrightarrow{\cong} Hom_{Tamb(G)} (\underline{\pi}_0 X, \underline{R}).
\end{align*}
\end{thm}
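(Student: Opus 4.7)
By taking cofibrant replacement of $X$ and fibrant replacement of $H\underline{R}$, we may treat $Hom_{Ho(comm_G)}(X, H\underline{R})$ as literal homotopy classes of maps. Since $H\underline{R}$ is Eilenberg MacLane and hence $1$-coconnected, Proposition~\ref{prop:commpost0} implies the canonical map $X \to Post^0 X$ induces a bijection on these classes, and $\underline{\pi}_0(X) \cong \underline{\pi}_0(Post^0 X)$. As $Post^0 X$ is both $(-1)$-connected and $1$-coconnected, it is itself Eilenberg MacLane, so we may assume $X$ is Eilenberg MacLane; write $\underline{S} = \underline{\pi}_0 X$.

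I next build a weak equivalence $EM(\underline{S}) \xrightarrow{\sim} X$ by running the $EM_0 \to EM_2$ construction from the proof of Theorem~\ref{thm:getanything}, but with the maps landing in $X$ instead of $EM_1(\underline{S})$. For each $x \in \underline{S}(G/H) = \underline{\pi}_0 X (G/H)$ I pick a spectrum representative $F_1 S^1 \wedge G/H_+ \to X$; the $\CC$-$U$ adjunction assembles these into a map $EM_0(\underline{S}) \to X$ in $comm_G$, and by Proposition~\ref{prop:commcoprod} together with Theorem~\ref{thm:gentamb} this realizes the canonical surjection $\coprod_{H,x} F_{G/H}^+ \to \underline{S}$ on $\underline{\pi}_0$. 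After passage to the fibrant replacement $EM_1(\underline{S})$, each kernel generator becomes nullhomotopic in $X$, so choices of nullhomotopies extend the map along the pushout to give $EM_2(\underline{S}) \to X$ in $Ho(comm_G)$; by Proposition~\ref{prop:commpushout} this is an isomorphism on $\underline{\pi}_0$. Since $X$ is $1$-coconnected the map factors through $EM(\underline{S}) = Post^0 EM_2(\underline{S})$, producing an isomorphism on $\underline{\pi}_0$ between Eilenberg MacLane objects and hence a weak equivalence.

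It remains to compute $[EM(\underline{S}), H\underline{R}]$. By the Postnikov property applied to the $1$-coconnected target, $[EM(\underline{S}), H\underline{R}] = [EM_2(\underline{S}), H\underline{R}]$, and the pushout defining $EM_2$ exhibits the latter as a fiber product. The cylinder factor $[\coprod_{K,y} \CC (F_1 S^1 \wedge G/K_+ \wedge I), H\underline{R}]$ collapses to a singleton because $F_1 S^1 \wedge G/K_+ \wedge I$ is contractible (the basepoint of $I$ is $1$), so these free commutative ring spectra are weakly equivalent to the sphere $S$. The $\CC$-$U$ adjunction combined with $F_1 S^1 \wedge G/H_+ \simeq \Sigma^{\infty} G/H_+$ gives $[\CC (F_1 S^1 \wedge G/H_+), H\underline{R}] \cong \underline{R}(G/H)$; using Propositions~\ref{prop:repsemitamb} and~\ref{prop:commcoprod} (together with Theorem~\ref{thm:gentamb}) these assemble into $Hom_{Tamb(G)}(\underline{\pi}_0 EM_1(\underline{S}), \underline{R})$. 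The vanishing condition along each kernel generator $y_*$ is precisely the condition that such a Tambara map factor through the quotient $\underline{\pi}_0 EM_1(\underline{S}) \to \underline{S}$, producing the required $Hom_{Tamb(G)}(\underline{S}, \underline{R})$. Naturality throughout identifies the resulting bijection with $(\underline{\pi}_0)_*$.

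\textbf{Main obstacle.} The delicate point is executing the construction $EM(\underline{S}) \xrightarrow{\sim} X$ coherently in $Ho(comm_G)$ despite the many choices of representatives and nullhomotopies. This succeeds because $X$ being Eilenberg MacLane ensures all required nullhomotopies exist and because Proposition~\ref{prop:commpost0} supplies a canonical Postnikov factorization; the resulting map in the homotopy category is independent of the choices up to homotopy, and is a weak equivalence by the homotopy group comparison.
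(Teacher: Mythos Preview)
Your proof is correct and follows essentially the same strategy as the paper's: construct a map $EM_2(\underline{\pi}_0 X) \to X$ inducing an isomorphism on $\underline{\pi}_0$, use Proposition~\ref{prop:commpost0} to reduce to computing $[EM_2(\underline{\pi}_0 X), H\underline{R}]$, and then unwind the explicit pushout description of $EM_2$ via the free/forgetful adjunction. The paper compresses the last step into the phrase ``explicit construction of $EM_2$ and standard adjunctions,'' while you spell out the fiber-product computation; conversely, the paper skips your preliminary replacement of $X$ by $Post^0 X$ and works directly with the (non-equivalence) $EM_2(\underline{\pi}_0 X) \to X$, invoking Proposition~\ref{prop:commpost0} once rather than twice. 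Your detour through an Eilenberg MacLane $X$ and the weak equivalence $EM(\underline{S}) \xrightarrow{\sim} X$ is harmless but unnecessary.

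One point you pass over that deserves a word: asserting that the homotopy pushout defining $EM_2$ yields a genuine pullback of \emph{sets} $[EM_2,H\underline{R}] \cong [EM_1,H\underline{R}] \times_{[\coprod \CC(\ldots),H\underline{R}]} \{\ast\}$ implicitly uses that $\pi_1$ of the relevant mapping spaces vanishes. This holds because $Map_{comm_G}(\CC(F_1 S^1 \wedge G/K_+), H\underline{R}) \simeq Map_{Sp_G}(\Sigma^{\infty} G/K_+, H\underline{R})$ has $\pi_1 = \underline{\pi}_1(H\underline{R})(G/K) = 0$, the target being Eilenberg MacLane. The paper's ``standard adjunctions'' hides the same point, so this is not a gap relative to the paper, but it is the actual content behind the step.
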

\begin{proof}
We may assume that $X$ and $H\underline{R}$ are both fibrant. Using the notation from the proof of Theorem~\ref{thm:getanything}, it is a simple matter to construct a map
\begin{align*}
	EM_2 (\underline{\pi}_0 X) \to X
\end{align*}
which induces an isomorphism on $\underline{\pi}_0$. Then by Proposition~\ref{prop:commpost0}, it induces an isomorphism on $Hom_{Ho(comm_G)} (\spacedash, H\underline{R})$ as well. Hence we may replace $X$ by $EM_2 (\underline{\pi}_0 X)$. The result now follows from the explicit construction of $EM_2$ and standard adjunctions.
\end{proof}

Combining the above two theorems, we have the following.

\begin{thm}\label{thm:hoemcomm}
The functor $\underline{\pi}_0$ induces an equivalence of categories from the homotopy category of Eilenberg MacLane commutative ring spectra to the category of $G$-Tambara functors.
\end{thm}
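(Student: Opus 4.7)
The plan is simply to combine the two preceding theorems. First, I would verify essential surjectivity: given a Tambara functor $\underline{R}$, the spectrum $EM(\underline{R})$ produced by Theorem~\ref{thm:getanything} is Eilenberg MacLane by construction, and the theorem supplies a natural isomorphism $\underline{\pi}_0 EM(\underline{R}) \cong \underline{R}$. Hence every object of $Tamb(G)$ lies in the image (up to isomorphism) of $\underline{\pi}_0$ restricted to Eilenberg MacLane commutative ring spectra.

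Next I would check full faithfulness. If $H\underline{R}$ and $H\underline{S}$ are Eilenberg MacLane commutative ring spectra, then $H\underline{R}$ is in particular $(-1)$-connected (its homotopy Mackey functors vanish outside degree zero). Applying Theorem~\ref{thm:homhocomm} with $X = H\underline{R}$ gives a bijection
\begin{align*}
	(\underline{\pi}_0)_* : Hom_{Ho(comm_G)}(H\underline{R}, H\underline{S}) \xrightarrow{\cong} Hom_{Tamb(G)}(\underline{R}, \underline{S}),
\end{align*}
which is exactly the statement that $\underline{\pi}_0$ is fully faithful on the homotopy category of Eilenberg MacLane commutative ring spectra. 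Since it is also essentially surjective onto $Tamb(G)$, the induced functor is an equivalence.

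There is no real obstacle here; the theorem is a formal consequence of Theorems~\ref{thm:getanything} and~\ref{thm:homhocomm}, with the former providing a quasi-inverse $EM$ (at least up to the identification $\underline{\pi}_0 \circ EM \cong Id$) and the latter guaranteeing the hom-set bijections. If one wanted to be completely explicit, one could note that $EM$ restricted along $\underline{\pi}_0$ gives a functor $Tamb(G) \to Ho(EM\text{-}comm_G)$, and the natural isomorphism $\underline{\pi}_0 \circ EM \cong Id$ together with full faithfulness forces the other composite $EM \circ \underline{\pi}_0$ to be naturally isomorphic to the identity on the Eilenberg MacLane homotopy category, completing the equivalence.
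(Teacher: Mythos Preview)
Your proposal is correct and matches the paper's approach exactly: the paper states this theorem immediately after Theorems~\ref{thm:getanything} and~\ref{thm:homhocomm} with only the sentence ``Combining the above two theorems, we have the following,'' leaving the essential surjectivity/full faithfulness argument implicit. Your write-up simply makes that implicit argument explicit.
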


Next we obtain some algebraic corollaries. First, combining Proposition~\ref{prop:commcoprod} and Theorem~\ref{thm:getanything} we get coproducts in $Tamb(G)$.

\begin{cor}\label{cor:tambcoprod}
Let $\{ X_{\alpha} \}$ be a collection of Tambara functors. The coproduct of the $X_{\alpha}$'s in the category of commutative Green functors has a unique Tambara functor structure inducing the canonical product such that the canonical inclusions are maps of Tambara functors, and this structure makes it the coproduct in $Tamb(G)$.
\end{cor}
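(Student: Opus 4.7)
The plan is to realize the coproduct topologically and transport the Tambara structure from $\underline{\pi}_0$ of a coproduct in $comm_G$, using Theorem~\ref{thm:getanything} and Proposition~\ref{prop:commcoprod}, and then to obtain the uniqueness clause from the inductive argument behind Lemma~\ref{lem:weaktambcoprod}.

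First, for each index $\alpha$ I would apply Theorem~\ref{thm:getanything} to obtain a cofibrant, $(-1)$-connected Eilenberg MacLane commutative ring spectrum $Y_{\alpha} \defeq EM(X_{\alpha})$ together with an isomorphism $\underline{\pi}_0 Y_{\alpha} \cong X_{\alpha}$ in $Tamb(G)$. Forming the coproduct $Y \defeq \coprod_{\alpha} Y_{\alpha}$ in $comm_G$, Proposition~\ref{prop:commcoprod} implies that $\underline{\pi}_0 Y$ together with the evident inclusions is the coproduct of the $X_{\alpha}$'s in $Tamb(G)$. Since the $Y_{\alpha}$'s are cofibrant and $(-1)$-connected, the resulting smash products are derived, so the identification $\underline{\pi}_0(A \wedge B) \cong \underline{\pi}_0 A \otimes \underline{\pi}_0 B$ cited in the introduction, passed through a filtered colimit over finite subcollections in the infinite case, shows that the underlying commutative Green functor of $\underline{\pi}_0 Y$ is precisely the coproduct of the $X_{\alpha}$'s in $Comm(G)$, with structure maps the canonical inclusions. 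This gives the existence half of the claim: the commutative Green coproduct carries a Tambara structure inducing the canonical product and making every canonical inclusion a map of Tambara functors.

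For uniqueness, suppose two Tambara structures on the commutative Green coproduct both induce the canonical product and make each inclusion $X_{\beta} \hookrightarrow \bigotimes_{\alpha} X_{\alpha}$ a map of Tambara functors. By Lemma~\ref{lem:weaktambcoprod}, together with its extension to arbitrary collections by induction on finite subcollections and Lemma~\ref{lem:dirlimtamb}, each of these structures satisfies the coproduct universal property in $Tamb(G)$. Hence the identity map on the underlying Green functor is forced to be a map of Tambara functors in both directions (it respects the inclusions by assumption, so the universal property produces the identity), and the two Tambara structures must coincide.

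The main obstacle I expect is bookkeeping in the infinite-collection case: one must verify that the coproduct in $Comm(G)$ of an arbitrary collection is computed as the filtered colimit over finite sub-coproducts, that this colimit is compatible with the corresponding filtered colimit on the spectrum side (which is handled exactly as in Proposition~\ref{prop:commcoprod}, where h-cofibrancy of the structure maps in $comm_G$ is used to commute $\underline{\pi}_0$ past the colimit), and that Lemma~\ref{lem:weaktambcoprod} extends accordingly via Lemma~\ref{lem:dirlimtamb}. None of this is substantively new; everything reduces to the two-factor case already established.
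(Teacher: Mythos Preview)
Your proposal is correct and follows essentially the same approach as the paper: the paper's proof is simply the sentence ``combining Proposition~\ref{prop:commcoprod} and Theorem~\ref{thm:getanything} we get coproducts in $Tamb(G)$,'' and you have unpacked exactly this, together with a standard universal-property argument for the uniqueness clause that the paper leaves implicit.
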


\indent \emph{Remark:} The above corollary is proven algebraically in~\cite{Stri}.\\
\indent We now obtain pushouts by applying Proposition~\ref{prop:commpushout} and Theorem~\ref{thm:getanything}.

\begin{cor}\label{cor:tambpushout}
If $\underline{R}_0 \to \underline{R}_1$ and $\underline{R}_0 \to \underline{R}_2$ are maps of Tambara functors, then the pushout in the category of commutative Green functors has a unique Tambara functor structure inducing the canonical product such that $\underline{R}_1 \to \underline{R}_1 \otimes_{\underline{R}_0} \underline{R}_2$ and $\underline{R}_2 \to \underline{R}_1 \otimes_{\underline{R}_0} \underline{R}_2$ are maps of Tambara functors, and this structure makes it the pushout in $Tamb(G)$.
\end{cor}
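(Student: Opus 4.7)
The plan is to mimic the proof of Corollary~\ref{cor:tambcoprod}: realize the cospan of Tambara functors by a cospan of commutative ring spectra, take the pushout topologically, and transport the resulting Tambara structure on $\underline{\pi}_0$ back to $\underline{R}_1 \otimes_{\underline{R}_0} \underline{R}_2$. Concretely, I would first apply Theorem~\ref{thm:getanything} to produce cofibrant, fibrant, Eilenberg MacLane (hence $(-1)$-connected) commutative ring spectra $X_i = EM(\underline{R}_i)$ for $i=0,1,2$. By Theorem~\ref{thm:homhocomm}, the given Tambara maps $\underline{R}_0 \to \underline{R}_j$ (for $j=1,2$) lift to morphisms $X_0 \to X_j$ in $Ho(comm_G)$, and since $X_0$ is cofibrant and each $X_j$ is fibrant these are represented by honest maps in $comm_G$. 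Factor the first as a cofibration $X_0 \rightarrowtail Y_1$ followed by an acyclic fibration $Y_1 \xrightarrow{\sim} X_1$, and form the pushout $P := Y_1 \amalg_{X_0} X_2$ in $comm_G$.

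By Proposition~\ref{prop:commpushout}, $P$ is cofibrant and $(-1)$-connected, and the induced square on $\underline{\pi}_0$ is a pushout square in $Tamb(G)$ whose underlying square in $Comm(G)$ canonically identifies $\underline{\pi}_0 P$ with the commutative-Green-functor pushout $\underline{R}_1 \otimes_{\underline{R}_0} \underline{R}_2$. Transporting the Tambara structure of $\underline{\pi}_0 P$ along this Green functor isomorphism yields the desired Tambara structure on $\underline{R}_1 \otimes_{\underline{R}_0} \underline{R}_2$, for which both inclusions are Tambara maps and the square is a pushout in $Tamb(G)$.

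For uniqueness, suppose two such Tambara structures $T_a$ and $T_b$ exist on $\underline{R}_1 \otimes_{\underline{R}_0} \underline{R}_2$. By Corollary~\ref{cor:tambcoprod}, $\underline{R}_1 \otimes \underline{R}_2$ is the coproduct of $\underline{R}_1$ and $\underline{R}_2$ in $Tamb(G)$, so the inclusions $\underline{R}_j \to \underline{R}_1 \otimes_{\underline{R}_0} \underline{R}_2$ (which are Tambara for either structure) assemble into a Tambara map $\underline{R}_1 \otimes \underline{R}_2 \to \underline{R}_1 \otimes_{\underline{R}_0} \underline{R}_2$ that is the canonical quotient on underlying Green functors; hence the hypothesis of Corollary~\ref{cor:weaktambpushout} is verified for both $T_a$ and $T_b$. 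Each structure therefore makes $\underline{R}_1 \otimes_{\underline{R}_0} \underline{R}_2$ into the pushout in $Tamb(G)$, and the unique Tambara map $T_a \to T_b$ extending the identity cospan must on the underlying Green functor coincide with the unique endomap of the $Comm(G)$-pushout extending the identity on $\underline{R}_1$ and $\underline{R}_2$ -- namely the identity -- so $T_a = T_b$. No step presents a real obstacle: the argument merely assembles the preceding results, with only standard model-categorical care needed to replace the lifted homotopy-class map $X_0 \to X_1$ by a point-set cofibration before forming the pushout.
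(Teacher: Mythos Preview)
Your argument is correct and follows the paper's approach: realize the cospan by commutative ring spectra via Theorem~\ref{thm:getanything}, form the homotopy pushout, and invoke Proposition~\ref{prop:commpushout}. One small simplification: since Theorem~\ref{thm:getanything} actually provides a \emph{functor} $EM : Tamb(G) \to comm_G$, you can obtain the maps $X_0 \to X_j$ directly as $EM$ applied to $\underline{R}_0 \to \underline{R}_j$, avoiding the detour through Theorem~\ref{thm:homhocomm}. Your uniqueness argument via Corollary~\ref{cor:weaktambpushout} and the universal property in $Comm(G)$ is a clean way to make explicit what the paper leaves implicit.
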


Since all colimits can be obtained from coproducts and pushouts, we obtain the following.

\begin{cor}\label{cor:tambcocomplete}
The category of Tambara functors is cocomplete, and the forgetful functor to the category of commutative Green functors preserves colimits.
\end{cor}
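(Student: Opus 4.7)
The plan is to reduce the claim to the standard fact that a category with arbitrary coproducts and coequalizers has all small colimits, and then to replace ``coequalizer'' by ``pushout''. Corollaries~\ref{cor:tambcoprod} and~\ref{cor:tambpushout} already supply these two types of colimits in $Tamb(G)$ and, crucially, state that they are computed by the underlying commutative Green functor, which is exactly what will yield the preservation statement for the forgetful functor as a free bonus.

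First I would recall the classical formula that expresses an arbitrary colimit of a diagram $F \colon J \to \mathcal{C}$ as a coequalizer
\begin{align*}
\coprod_{(\alpha \colon j \to k) \in \text{Mor}(J)} F(j) \rightrightarrows \coprod_{j \in \text{Ob}(J)} F(j),
\end{align*}
where the two parallel arrows are induced by the identities and by the $F(\alpha)$'s respectively. Next I would note that the coequalizer of two maps $f,g \colon X \to Y$ is the pushout of $(f,g) \colon X \coprod X \to Y$ along the fold map $X \coprod X \to X$. Combining these two observations, every small colimit in $Tamb(G)$ can be written as an iterated combination of (possibly infinite) coproducts and pushouts. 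Since Corollary~\ref{cor:tambcoprod} provides coproducts for any collection of Tambara functors and Corollary~\ref{cor:tambpushout} provides pushouts, this establishes that $Tamb(G)$ is cocomplete.

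For the second statement I would simply observe that both corollaries explicitly identify the underlying commutative Green functor of the coproduct (respectively pushout) in $Tamb(G)$ with the corresponding colimit in $Comm(G)$; in categorical language, the forgetful functor $Tamb(G) \to Comm(G)$ preserves coproducts and pushouts. Since the construction of a general colimit from coproducts and pushouts is natural in the ambient category, any functor preserving both preserves all small colimits. Therefore the forgetful functor preserves colimits.

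I do not expect any serious obstacle here; the only point worth verifying carefully is that Corollary~\ref{cor:tambcoprod} is stated for arbitrary (possibly infinite) collections, which it is, so the coproducts appearing in the colimit formula above really do live in $Tamb(G)$. Everything else is formal.
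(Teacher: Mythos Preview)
Your proposal is correct and follows exactly the approach the paper takes: the paper simply remarks that ``all colimits can be obtained from coproducts and pushouts'' and invokes Corollaries~\ref{cor:tambcoprod} and~\ref{cor:tambpushout}, which is precisely your argument spelled out in more detail. There is nothing to add.
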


Next, since restriction functors are exact and symmetric monoidal, Corollaries~\ref{cor:tambcoprod} and~\ref{cor:tambpushout} imply the following.

\begin{cor}\label{cor:rescolimit}
If $H$ is a subgroup of $G$ then the restriction functor
\begin{align*}
	\Res_H^G : Tamb(G) \to Tamb(H)
\end{align*}
preserves colimits.
\end{cor}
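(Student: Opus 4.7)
The plan is to reduce preservation of arbitrary colimits to preservation of coproducts and pushouts via Corollary~\ref{cor:tambcocomplete}, and then apply the explicit structural descriptions in Corollaries~\ref{cor:tambcoprod} and~\ref{cor:tambpushout}. Thus it suffices to check that $\Res_H^G$ preserves coproducts and pushouts in $Tamb(G)$.

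For coproducts, Corollary~\ref{cor:tambcoprod} identifies the coproduct of a family $\{\underline{R}_\alpha\}$ in $Tamb(G)$ with its coproduct in $Comm(G)$, equipped with the unique Tambara structure compatible with the canonical inclusions. Since restriction is exact and symmetric monoidal on commutative Green functors, it preserves colimits in $Comm(G)$, so $\Res_H^G\bigl(\coprod_\alpha \underline{R}_\alpha\bigr)$ and $\coprod_\alpha \Res_H^G \underline{R}_\alpha$ have the same underlying commutative Green functor with matching inclusions. The Tambara structure carried along by $\Res_H^G$ from the $G$-side induces the canonical product and makes each inclusion an $H$-Tambara map; this follows because the induction functor $G \times_H(-) : \Fin_H \to \Fin_G$ preserves coproducts, pullbacks, and exponential diagrams, so it commutes with all of the restriction, transfer, and norm data that define a Tambara functor. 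By the uniqueness in Corollary~\ref{cor:tambcoprod} applied in $Tamb(H)$, this structure must be the coproduct structure there. The pushout case is verbatim the same, with Corollary~\ref{cor:tambpushout} in place of Corollary~\ref{cor:tambcoprod}.

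There is no substantive obstacle beyond this bookkeeping: once one notes that restriction is both exact and symmetric monoidal at the level of commutative Green functors, and that $G \times_H(-)$ respects all the diagram shapes appearing in the definition of a Tambara functor, the corollary falls out of the uniqueness assertions of the previous two corollaries combined with the fact that all colimits are generated by coproducts and pushouts.
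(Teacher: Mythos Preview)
Your proposal is correct and follows essentially the same approach as the paper: the paper's proof is the one-line remark that restriction functors are exact and symmetric monoidal, so Corollaries~\ref{cor:tambcoprod} and~\ref{cor:tambpushout} (together with~\ref{cor:tambcocomplete}) give the result. You have simply spelled out the bookkeeping that the paper leaves implicit; note also that the paper observes an even shorter alternative argument via the right adjoint $\Ind_H^G$.
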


\indent \emph{Remark:} There is a much simpler proof of this corollary: the induction functor actually induces a right adjoint
\begin{align*}
	\Ind_H^G : Tamb(H) \to Tamb(G)
\end{align*}
to $\Res_H^G$.\\
\indent Next, we can prove the existence of free Tambara functors using the fact that the free commutative ring spectrum on a $(-1)$-connected spectrum is $(-1)$-connected and Theorem~\ref{thm:homhocomm}. In the following we use $\CC$ to denote the \emph{derived} free commutative ring spectrum functor.

\begin{cor}\label{cor:existfreetambara}
If $\underline{M}$ is a Mackey functor then $\underline{\pi}_0 \CC (H\underline{M})$ is a free Tambara functor on $\underline{M}$. Hence the forgetful functor from $Tamb(G)$ to $Mack(G)$ has a left adjoint as below.
\begin{align*}
	\TT : Mack(G) &\to Tamb(G) \\
	\underline{M} &\to \underline{\pi}_0 \CC (H\underline{M})
\end{align*}
\end{cor}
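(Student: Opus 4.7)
The plan is to deduce the universal property of a free Tambara functor by chaining three natural bijections: the adjunction of Theorem~\ref{thm:homhocomm}, the derived free/forgetful adjunction between $Sp_G$ and $comm_G$, and the classical identification of Eilenberg MacLane spectra with Mackey functors.

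Before invoking Theorem~\ref{thm:homhocomm}, I must check that $\CC(H\underline{M})$ is $(-1)$-connected so that the hypothesis of that theorem is satisfied. Choose a cofibrant, $(-1)$-connected replacement $X \xrightarrow{\sim} H\underline{M}$ in $Sp_G$; then the derived $\CC(H\underline{M})$ is computed as $\bigvee_{n \geq 0} X^{\wedge n}/\Sigma_n$. Each smash power of a $(-1)$-connected cofibrant spectrum is $(-1)$-connected, and passing to $\Sigma_n$-orbits and wedging preserves this property, so $\CC(H\underline{M})$ is $(-1)$-connected. With this in hand, for any Tambara functor $\underline{R}$ I would assemble the following chain of natural bijections:
\begin{align*}
	Hom_{Tamb(G)}(\underline{\pi}_0 \CC(H\underline{M}), \underline{R}) &\cong Hom_{Ho(comm_G)}(\CC(H\underline{M}), H\underline{R}) \\
	&\cong Hom_{Ho(Sp_G)}(H\underline{M}, H\underline{R}) \\
	&\cong Hom_{Mack(G)}(\underline{M}, \underline{R}).
\end{align*}
The first bijection is Theorem~\ref{thm:homhocomm}, applicable thanks to the connectivity just verified together with $H\underline{R}$ being Eilenberg MacLane. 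The second is the derived $(\CC, U)$ adjunction, which arises because the free/forgetful pair between $Sp_G$ and $comm_G$ is Quillen with respect to the model structures of Section~\ref{sec:commspectra}. The third is the classical identification of the homotopy category of Eilenberg MacLane spectra with $Mack(G)$. The chain is natural in $\underline{R}$ (and in $\underline{M}$), which yields simultaneously the free-object property for each $\underline{M}$ and the left adjoint $\TT$.

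I do not expect any real obstacle: Theorem~\ref{thm:homhocomm} does all the heavy lifting, and the rest is formal. The one point requiring a moment's care is the connectivity check for $\CC(H\underline{M})$, but this is immediate from the wedge decomposition of $\CC$ together with standard facts about smash powers and orbit spectra of connective, cofibrant spectra. The existence of $\TT$ as a functor on all of $Mack(G)$ and the adjunction $\TT \dashv U$ then follows from the pointwise universal property by the standard construction of a left adjoint from a family of universal arrows.
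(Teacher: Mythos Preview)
Your proposal is correct and is essentially identical to the paper's own proof: the paper chains the same three natural bijections (Mackey/EM, the derived free--forgetful adjunction, and Theorem~\ref{thm:homhocomm}), merely written in the opposite order. Your explicit verification that $\CC(H\underline{M})$ is $(-1)$-connected matches the paper's remark just before the corollary and is the only nontrivial hypothesis check needed.
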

\begin{proof}
Let $\underline{M}$ be a Mackey functor and $\underline{R}$ a Tambara functor. We have
\begin{align*}
	Hom_{Mack(G)} (\underline{M}, \underline{R}) &\cong Hom_{Ho(Sp_G)} (H\underline{M}, H\underline{R}) \\
	&\cong Hom_{Ho(comm_G)} (\CC (H\underline{M}), H\underline{R}) \\
	&\cong Hom_{Tamb(G)} (\underline{\pi}_0 \CC (H\underline{M}), \underline{R}).
\end{align*}
\end{proof}

\indent The following is immediate from the explicit form of free Tambara functors. Again, there is a much simpler proof: $\TT$ and $\Res_H^G$ are left adjoints whose right adjoints (the forgetful functor and $\Ind_H^G$, resp.) commute.

\begin{cor}\label{cor:resfreetambara}
Let $H$ be a subgroup of $G$. If $f : \underline{M} \to \underline{R}$ expresses $\underline{R}$ as a free $G$-Tambara functor on $\underline{M}$ then its restriction
\begin{align*}
	\Res_H^G f : \Res_H^G \underline{M} \to \Res_H^G \underline{R}
\end{align*}
expresses $\Res_H^G \underline{R}$ as a free $H$-Tambara functor on $\Res_H^G \underline{M}$.
\end{cor}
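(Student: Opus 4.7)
The plan is to prove a natural isomorphism $\Res_H^G \TT_G(\underline{M}) \cong \TT_H(\Res_H^G \underline{M})$ compatible with the units $\underline{M} \to \TT_G(\underline{M})$ and $\Res_H^G \underline{M} \to \TT_H(\Res_H^G \underline{M})$, for every Mackey functor $\underline{M}$. Applied with $\underline{M}$ the Mackey functor of which $\underline{R}$ is the free Tambara functor, this transports the universal property of $\TT_H(\Res_H^G \underline{M})$ to $\Res_H^G \underline{R}$, which is exactly what the corollary asserts.

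I would favor the adjoint-functor argument flagged in the remark preceding the corollary. By Corollary~\ref{cor:existfreetambara} we have $\TT \dashv U$, where $U$ is the forgetful functor from Tambara to Mackey functors; and the remark after Corollary~\ref{cor:rescolimit} gives a right adjoint $\Ind_H^G : Tamb(H) \to Tamb(G)$ to $\Res_H^G$, lifting the usual induction on Mackey functors. Viewed as functors $Tamb(H) \to Mack(G)$, the two composite right adjoints ``Tambara-induce then forget'' and ``forget then Mackey-induce'' are naturally isomorphic, because Tambara-level induction is built on top of Mackey-level induction at the underlying level. Taking left adjoints of this commuting square reverses the order of factors and yields the desired natural isomorphism $\Res_H^G \circ \TT_G \cong \TT_H \circ \Res_H^G$ as functors $Mack(G) \to Tamb(H)$.

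As a backup, and to match the ``immediate from the explicit form'' claim, one argues directly. By the Yoneda embedding every Mackey functor $\underline{M}$ is canonically the colimit of the represented Mackey functors $[\spacedash, T]$ indexed over the category of pairs $(T, [\spacedash, T] \to \underline{M})$. Since $\TT$ is a left adjoint and $\Res_H^G$ preserves colimits on both $Mack(G)$ and $Tamb(G)$ (Corollary~\ref{cor:rescolimit}), the two candidates $\Res_H^G \TT_G(\underline{M})$ and $\TT_H \Res_H^G(\underline{M})$ are each the colimit of the same diagram of free $H$-Tambara functors $F_{\Res_H^G T}^+$, thanks to the identification $\Res_H^G F_T^+ \cong F_{\Res_H^G T}^+$ of Proposition~\ref{prop:resft}.

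The hard part will be essentially bookkeeping. For the adjoint argument, the one substantive check is that the forgetful functor commutes with Tambara-level induction, which is transparent from any reasonable construction of $\Ind_H^G : Tamb(H) \to Tamb(G)$. For the explicit argument, one needs the naturality of the Yoneda-colimit presentation and of Proposition~\ref{prop:resft} to line up, which they do by their functorial definitions. Nothing here is substantive compared to what has already been developed.
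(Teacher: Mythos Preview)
Your proposal is correct. Your primary adjoint-functor argument is exactly the paper's ``much simpler proof'': the right adjoints $U$ and $\Ind_H^G$ commute, so the left adjoints $\TT$ and $\Res_H^G$ do as well, giving $\Res_H^G \circ \TT_G \cong \TT_H \circ \Res_H^G$.

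Your backup argument is valid but differs slightly from what the paper seems to intend by ``immediate from the explicit form.'' Given the placement immediately after Corollary~\ref{cor:existfreetambara}, the paper's explicit form is most naturally read as the formula $\TT(\underline{M}) = \underline{\pi}_0\,\CC(H\underline{M})$: one then uses that $\Res_H^G$ commutes with $\underline{\pi}_0$, with $\CC$ (being strong symmetric monoidal and cocontinuous on spectra), and with the Eilenberg--MacLane construction, to conclude directly. Your route instead writes a general $\underline{M}$ as a colimit of representables and invokes Proposition~\ref{prop:resft} together with colimit preservation on both sides. Both approaches work; the paper's is more topological and uses the machinery just developed, while yours stays purely algebraic and makes the reduction to the represented case (where Proposition~\ref{prop:resft} already does the work) explicit.
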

\indent Finally, let $H$ be a subgroup of $G$, and recall that the norm functor
\begin{align*}
	N_H^G : comm_H \to comm_G
\end{align*}

of~\cite{HHR} gives the left adjoint of the restriction functor. Note that, if $X \in comm_H$ is cofibrant and $(-1)$-connected, then so is $N_H^G X$. This is because $X$ can be given a cell structure with no negative cells; applying $N_H^G$, we obtain a cell structure for $N_H^G X$ with no negative cells.

\begin{cor}\label{cor:normleftadjoint}
Let $H$ be a subgroup of $G$. The restriction functor $\Res_H^G : Tamb(G) \to Tamb(H)$ has a left adjoint, given below.
\begin{align*}
	\underline{R} \mapsto \underline{\pi}_0 N_H^G (H\underline{R})
\end{align*}
\end{cor}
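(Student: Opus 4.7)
The plan is to compose three natural bijections: Theorem~\ref{thm:homhocomm} applied over $G$, the derived adjunction $N_H^G \dashv \Res_H^G$ on commutative ring spectra, and Theorem~\ref{thm:homhocomm} applied over $H$. Let $\underline{R}$ be an $H$-Tambara functor and $\underline{S}$ a $G$-Tambara functor, and use the functor $EM$ of Theorem~\ref{thm:getanything} to choose cofibrant, fibrant Eilenberg MacLane representatives $H\underline{R} \in comm_H$ and $H\underline{S} \in comm_G$ realizing $\underline{R}$ and $\underline{S}$, respectively.

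For the first step, the paragraph immediately preceding the corollary tells us that $N_H^G H\underline{R}$ is cofibrant and $(-1)$-connected in $comm_G$, so Theorem~\ref{thm:homhocomm} yields
$$Hom_{Tamb(G)}(\underline{\pi}_0 N_H^G H\underline{R}, \underline{S}) \cong Hom_{Ho(comm_G)}(N_H^G H\underline{R}, H\underline{S}).$$
Next, since $N_H^G : comm_H \to comm_G$ is the left Quillen adjoint to $\Res_H^G$ (cf.~\cite{HHR},~\cite{Stolz}), with $H\underline{R}$ cofibrant and $H\underline{S}$ fibrant, the derived adjunction gives
$$Hom_{Ho(comm_G)}(N_H^G H\underline{R}, H\underline{S}) \cong Hom_{Ho(comm_H)}(H\underline{R}, \Res_H^G H\underline{S}).$$
Finally, Lemma~\ref{commrest} shows that $\Res_H^G H\underline{S}$ is Eilenberg MacLane with $\underline{\pi}_0 = \Res_H^G \underline{S}$, and $H\underline{R}$ is cofibrant and $(-1)$-connected, so a second application of Theorem~\ref{thm:homhocomm} (now over $H$) produces
$$Hom_{Ho(comm_H)}(H\underline{R}, \Res_H^G H\underline{S}) \cong Hom_{Tamb(H)}(\underline{R}, \Res_H^G \underline{S}).$$

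Composing these three bijections gives the asserted natural isomorphism, exhibiting $\underline{R} \mapsto \underline{\pi}_0 N_H^G H\underline{R}$ as left adjoint to $\Res_H^G$. Naturality in both variables follows from functoriality of $EM$ and naturality of each of the three isomorphisms above. The only substantive point---that $N_H^G$ preserves the hypotheses (cofibrant and $(-1)$-connected) required by Theorem~\ref{thm:homhocomm}---is already dispatched in the paragraph before the statement; beyond that, the argument is purely formal, essentially translating the known commutative-ring-spectrum adjunction through the equivalence of Theorem~\ref{thm:hoemcomm}.
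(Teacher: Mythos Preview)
Your proof is correct and follows essentially the same approach as the paper: both compose the bijection of Theorem~\ref{thm:homhocomm} (over $G$ and over $H$) with the derived adjunction $N_H^G \dashv \Res_H^G$ on commutative ring spectra. You have simply written the chain of isomorphisms in the reverse order and supplied more explicit justification for the hypotheses at each step.
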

\begin{proof}
Let $\underline{R}_1 \in Tamb(H)$ and $\underline{R}_2 \in Tamb(G)$. We have
\begin{align*}
	Hom_{Tamb(H)} (\underline{R}_1, \Res_H^G \underline{R}_2) &\cong Hom_{Ho(comm_H)} (H\underline{R}_1, \Res_H^G H\underline{R}_2) \\
	&\cong Hom_{Ho(comm_G)} (N_H^G H\underline{R}_1, H\underline{R}_2) \\
	&\cong Hom_{Tamb(G)} (\underline{\pi}_0 N_H^G (H\underline{R}_1), \underline{R}_2).
\end{align*}
\end{proof}

We wish to identify the above left adjoint with the derived norm functor shown below, where $H\underline{M}$ is taken to be cofibrant.
\begin{align*}
	N_H^G : Mack(H) &\to Mack(G) \\
	\underline{M} &\mapsto \underline{\pi}_0 N_H^G (H\underline{M})
\end{align*}

For this we require two more lemmas.

\begin{lem}\label{lem:normpi0}
If $f : X \to Y$ is a map of $(-1)$-connected, cofibrant $H$-spectra inducing an isomorphism on $\underline{\pi}_0$, then
\begin{align*}
	N_H^G f : N_H^G X \to N_H^G Y
\end{align*}
induces an isomorphism on $\underline{\pi}_0$.
\end{lem}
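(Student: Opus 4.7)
My plan is to exploit a filtration of the norm of a cofibration, combined with an equivariant K\"unneth connectivity estimate, to reduce the question to the fact that the $\underline{\pi}_0$-cofiber of $f$ is trivial.

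Using a functorial factorization in the positive stable $\SSS$ model structure on $H$-spectra, I may assume $f : X \to Y$ is a cofibration of cofibrant, $(-1)$-connected $H$-spectra; let $C \defeq Y/X$ be the strict cofiber. Since $X$ and $Y$ are $(-1)$-connected and $\underline{\pi}_0 f$ is an isomorphism, the cofiber long exact sequence forces $\underline{\pi}_0 C = 0$ and $\underline{\pi}_{-1} C = 0$, so $C$ is $0$-connected as a genuine $H$-spectrum. On cofibrant inputs, the norm $N_H^G(-)$ is the $(G/H)$-indexed smash product with its twisted $G$-action, and following Hill--Hopkins--Ravenel I would filter $N_H^G Y$ by the \emph{number of $C$-factors}:
\begin{align*}
	N_H^G X = F_0 \hookrightarrow F_1 \hookrightarrow \cdots \hookrightarrow F_n = N_H^G Y, \qquad n = [G:H],
\end{align*}
where each inclusion is an h-cofibration and the successive quotient $F_i/F_{i-1}$ decomposes, indexed by $G$-orbits $[S]$ of $i$-element subsets $S \subset G/H$ with stabilizer $K_S \subseteq G$, as a wedge of terms of the form
\begin{align*}
	G_+ \wedge_{K_S} \Bigl(\bigwedge_{gH \in S} C \;\wedge\; \bigwedge_{gH \notin S} X\Bigr),
\end{align*}
carrying the expected twisted $K_S$-action. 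For $i \geq 1$, each such summand smashes at least one copy of the $0$-connected spectrum $C$ against $(-1)$-connected spectra, so the equivariant connectivity estimate for smash products of genuine spectra on a complete universe guarantees it is $0$-connected as a $K_S$-spectrum; since $G_+ \wedge_{K_S}(-)$ preserves connectivity, $F_i/F_{i-1}$ is $0$-connected. Hence each $F_{i-1} \hookrightarrow F_i$ is a $\underline{\pi}_0$-isomorphism, and by composition so is $N_H^G f$.

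The main obstacle I anticipate is verifying the filtration in the second step in the equivariant setting: constructing it as an h-cofibration sequence and correctly identifying the twisted $K_S$-action on each associated graded piece from the combinatorics of how $G$ permutes $G/H$ requires care. The connectivity estimate is standard but must be checked subgroup-by-subgroup, which entails understanding how fixed points of an indexed smash product behave under the double-coset decomposition of $G/H$ by $K_S$. Fortunately, both the filtration and these connectivity inputs are already present in the norm machinery of HHR, so one can largely quote that work rather than reprove it; the only genuinely new accounting is the $\underline{\pi}_0$ bookkeeping, which is immediate once the filtration is set up.
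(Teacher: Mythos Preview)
Your argument is correct and is essentially the paper's approach: both reduce to the HHR-style filtration of the norm of a cofibration together with the connectivity estimate that the associated graded pieces involving a $0$-connected factor are $0$-connected. The only difference is which cofiber sequence is fed into that machinery---you use $X \hookrightarrow Y \to C$ directly, whereas the paper applies the cited theorem to the Postnikov cofiber sequences $Post_1 Z \to Z \to Post^0 Z$ for $Z = X, Y$ and then compares (which has the minor side benefit of showing that $\underline{\pi}_0 N_H^G Z$ depends only on $\underline{\pi}_0 Z$).
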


This may be proven by applying Theorem~I.5.9 of~\cite{Ull} to the natural cofiber sequences $Post_1 Z \to Z \to Post^0 Z$. It implies that the derived norm functor on Mackey functors is symmetric monoidal, and thus defines a functor from $Comm(H)$ to $Comm(G)$.

\begin{lem}\label{lem:veryflat}
If $X \in comm_H$ is cofibrant then $N_H^G X$ is equivalent to the derived norm in $Ho(Sp_G)$.
\end{lem}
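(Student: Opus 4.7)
The plan is to reduce, by induction on a cellular presentation of $X$ in $comm_H$, to analyzing how $N_H^G$ acts on the cell-attachment filtration from Section~\ref{sec:commspectra}. Two facts will do the bulk of the work. First, the point-set functor $N_H^G$ on spectra is strong symmetric monoidal, so it commutes naturally with $\CC$, i.e.\ $N_H^G \CC (Y) \cong \CC (N_H^G Y)$. Second, $N_H^G$ preserves positive cofibrations and their cofibers up to the flatness needed for symmetric powers to have the correct homotopy type; this is part of the HHR norm model-categorical package (see Section~I.5 of~\cite{Ull}).

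First I would write $X$ as a retract of a transfinite composition $\bar X = \colim_\alpha X_\alpha$ in $comm_H$ with $X_0 = S$ and $X_{\alpha+1}$ obtained from $X_\alpha$ by a pushout along $\CC (i_\alpha)$ for a generating positive cofibration $i_\alpha : A_\alpha \to B_\alpha$. Because $N_H^G : comm_H \to comm_G$ is left adjoint to restriction and commutes with $\CC$, applying $N_H^G$ produces a cell presentation of $N_H^G \bar X$ with each step a pushout along $\CC (N_H^G i_\alpha)$. The filtration analysis from Section~\ref{sec:commspectra} then gives an h-cofibration filtration of the underlying spectrum of $N_H^G X_{\alpha+1}$ by $N_H^G X_\alpha$-modules with successive quotients $N_H^G X_\alpha \wedge (N_H^G B_\alpha / N_H^G A_\alpha)^{\wedge j}/\Sigma_j$; in particular, $N_H^G \bar X$ is a sequential homotopy colimit of the $N_H^G X_\alpha$ in $Sp_G$.

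I would then verify by transfinite induction that each $N_H^G X_\alpha$ has the derived-norm homotopy type. The base case $N_H^G S = S$ and the limit case (using the h-cofibrations above and that $N_H^G$ commutes with sequential colimits of spectra) are routine. For the successor step, one compares the filtration of $N_H^G X_{\alpha+1}$ just described with the analogous filtration of $X_{\alpha+1}$, whose quotients are $X_\alpha \wedge (B_\alpha/A_\alpha)^{\wedge j}/\Sigma_j$: applying the derived norm termwise to the latter should recover the former, provided one has equivalences $N_H^G\bigl((B_\alpha/A_\alpha)^{\wedge j}/\Sigma_j\bigr) \simeq (N_H^G B_\alpha / N_H^G A_\alpha)^{\wedge j}/\Sigma_j$ and $N_H^G(X_\alpha \wedge Z) \simeq N_H^G X_\alpha \wedge N_H^G Z$ on these pieces. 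Both are manifestations of the strong symmetric monoidality of $N_H^G$ combined with preservation of positive cofibrations.

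The hard part will be that last homotopical comparison: $N_H^G$ is strong symmetric monoidal at the point-set level, but the equivalence $N_H^G(B/A) \simeq N_H^G B / N_H^G A$ together with its compatibility with the symmetric-power-mod-$\Sigma_j$ construction \emph{up to weak equivalence} requires $N_H^G A \to N_H^G B$ to be a suitably flat cofibration of $G$-spectra. This is precisely the content of the HHR norm model-categorical machinery from~\cite{HHR} (or Section~I.5 of~\cite{Ull}) and can be invoked as a black box; the remaining bookkeeping in the inductive step is then formal.
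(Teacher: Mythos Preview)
The paper does not give a proof; it simply cites Proposition~B.63 of~\cite{HHR}. Your outline is essentially a sketch of the cellular-induction argument that HHR uses to establish that proposition, so you are unpacking the citation rather than taking a genuinely different route.

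One caution about your successor step. The phrase ``applying the derived norm termwise to the latter should recover the former'' is not a valid move as stated. The derived norm $LN_H^G$ is not exact---it does not send cofiber sequences to cofiber sequences---so $LN_H^G X_{\alpha+1}$ cannot be computed by applying $LN_H^G$ to the associated graded of a filtration of $X_{\alpha+1}$, and matching the subquotients does not by itself yield $N_H^G X_{\alpha+1} \simeq LN_H^G X_{\alpha+1}$. The way to make the induction close is to carry a stronger hypothesis: that each $X_\alpha$ (and each stage $Y_j$ of the module filtration) lies in a class of $H$-spectra on which the point-set norm already computes the derived norm and produces a flat $G$-spectrum. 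One then uses the filtration to show this class is closed under the pushouts along $\CC(i_\alpha)$; the HHR input you invoke as a black box is precisely what puts symmetric powers of positive cofibrant spectra into that class. With that reformulation the argument goes through, but it takes more care than your sketch indicates---which is why the paper simply defers to~\cite{HHR}.
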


This lemma (for the classical model structure) is Proposition~B.63 of~\cite{HHR}.

\begin{cor}\label{cor:leftadjointderivednorm}
Let $H$ be a subgroup of $G$. The left adjoint of the restriction functor $\Res_H^G : Tamb(G) \to Tamb(H)$ coincides with the derived norm functor
\begin{align*}
	N_H^G : Mack(H) \to Mack(G)
\end{align*}
on underlying commutative Green functors.
\end{cor}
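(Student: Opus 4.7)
The plan is to identify both sides with $\underline{\pi}_0 N_H^G(H\underline{R})$, where $H\underline{R}$ is a cofibrant Eilenberg--MacLane replacement of $\underline{R}$ in $comm_H$, and to check that the two Green-functor structures on this common underlying Mackey functor agree. Fix $\underline{R} \in Tamb(H)$ with underlying Mackey functor $\underline{M}$ and underlying commutative Green functor $\underline{G}$. By Corollary~\ref{cor:normleftadjoint}, the left adjoint produces $\underline{\pi}_0 N_H^G(H\underline{R})$, whose Tambara (and hence Green) structure is inherited from $N_H^G(H\underline{R}) \in comm_G$. Since the point-set functor $N_H^G : Sp_H \to Sp_G$ is strong symmetric monoidal, the multiplication on $N_H^G(H\underline{R})$ is $N_H^G(\mu)$ where $\mu : H\underline{R} \wedge H\underline{R} \to H\underline{R}$ is the multiplication on $H\underline{R}$; thus the ring structure on $\underline{\pi}_0 N_H^G(H\underline{R})$ is $\underline{\pi}_0$ applied to the composite
\[
N_H^G(H\underline{R}) \wedge N_H^G(H\underline{R}) \;\xrightarrow{\cong}\; N_H^G(H\underline{R} \wedge H\underline{R}) \;\xrightarrow{N_H^G(\mu)}\; N_H^G(H\underline{R}).
\]

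On the other hand, by Lemma~\ref{lem:veryflat}, $N_H^G(H\underline{R})$ represents the derived norm of the underlying spectrum of $H\underline{R}$ in $Ho(Sp_G)$, so as a Mackey functor $\underline{\pi}_0 N_H^G(H\underline{R}) \cong N_H^G \underline{M}$ (the derived norm of $\underline{M}$). The Green structure on $N_H^G \underline{G}$ coming from the symmetric monoidality of derived $N_H^G$ on Mackey functors (the remark after Lemma~\ref{lem:normpi0}) is by construction the one inherited from the spectral strong monoidality of $N_H^G$ applied to cofibrant Eilenberg--MacLane spectra; that is, from exactly the composite displayed above.

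The only substantive check is compatibility: that $\underline{\pi}_0$ of the monoidal coherence isomorphism $N_H^G(H\underline{R}) \wedge N_H^G(H\underline{R}) \cong N_H^G(H\underline{R} \wedge H\underline{R})$ agrees with the expected isomorphism $N_H^G \underline{M} \otimes N_H^G \underline{M} \cong N_H^G(\underline{M} \otimes \underline{M})$, and that the canonical isomorphism $\underline{\pi}_0(H\underline{R} \wedge H\underline{R}) \cong \underline{M} \otimes \underline{M}$ carries $\underline{\pi}_0(\mu)$ to the Green-functor multiplication $\mu_{\underline{G}}$. Both follow from Lemma~\ref{lem:veryflat}, applied to $H\underline{R}$ and to the derived smash $H\underline{R} \wedge H\underline{R}$ (which is a derived smash by flatness of cofibrant commutative ring spectra, Lemma~\ref{lem:flatness}(iii)), together with naturality of $\underline{\pi}_0$ on $(-1)$-connected spectra. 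Given the earlier setup this is essentially definitional bookkeeping; I do not anticipate an obstacle beyond carefully chasing the coherence diagrams through $\underline{\pi}_0$.
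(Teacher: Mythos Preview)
Your proposal is correct and follows essentially the same approach as the paper. The paper states the corollary without an explicit proof, treating it as immediate from Corollary~\ref{cor:normleftadjoint} together with Lemmas~\ref{lem:normpi0} and~\ref{lem:veryflat}; you have accurately unpacked this implicit argument, correctly identifying that Lemma~\ref{lem:veryflat} is what bridges the commutative-ring norm and the derived spectrum-level norm, and that the Green structures match because both arise from the same strong monoidal structure on $N_H^G$ applied to (commutatively) cofibrant Eilenberg--MacLane spectra.
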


In a subsequent paper we shall give detailed algebraic descriptions of the effects of $\CC$ and $N_H^G$ on Mackey functors. We shall also give an algebraic demonstration of the adjunction in Corollary~\ref{cor:leftadjointderivednorm}.

\section{Tambara Functors vs Commutative Green Functors}\label{sec:cgreen}

In this section we investigate the extent to which $Tamb(G)$ differs from $Comm(G)$, and prove that there can be no lax symmetric monoidal construction of Eilenberg MacLane spectra. This section is independent of the previous sections.

\begin{prop}\label{prop:greennottamb}
If $G$ is nontrivial then there exists a commutative Green functor which does not arise from a Tambara functor.
\end{prop}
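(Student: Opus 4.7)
The plan is to exhibit a single commutative Green functor that cannot carry a compatible norm structure, by playing the multiplicative-unit requirement against the additive-zero requirement for a hypothetical norm. Define $\underline{R}$ by $\underline{R}(X) \defeq \ZZ\{X^G\}$, the free abelian group on the set of $G$-fixed points of $X$, regarded levelwise as a commutative ring under coordinatewise multiplication; equivalently $\underline{R}(G/G) = \ZZ$, while $\underline{R}(G/H) = 0$ (the zero ring) for every proper subgroup $H \subsetneq G$. Restrictions pull $\ZZ$-valued functions back along the induced maps on fixed-point sets, and transfers sum function values over fibers. The first step is the routine verification that $\underline{R}$ is a commutative Green functor: each restriction is visibly a ring homomorphism, Frobenius reciprocity reads pointwise as $f_*(f^* a \cdot b)(x) = a(x) \cdot f_*(b)(x)$, and the Mackey pullback relation follows from the identity $(Y \times_X Z)^G = Y^G \times_{X^G} Z^G$.

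Next I would assume $\underline{R}$ extended to a Tambara functor and derive a contradiction. Pick any proper subgroup $H \subsetneq G$ (one exists because $G$ is nontrivial) and consider the putative norm $n \defeq n_H^G \colon \underline{R}(G/H) \to \underline{R}(G/G) = \ZZ$. Multiplicativity together with $n(1) = 1$ forces $n(0) = 1$ in $\ZZ$, since $1 = 0$ in the zero ring $\underline{R}(G/H)$. On the other hand, the schematic additivity relation recalled in Section~\ref{sec:tambara},
\[
	n_H^G(a + b) = n_H^G(a) + n_H^G(b) + t(\ldots),
\]
has every $t(\ldots)$ summand expressed as a transfer of an element living at a subgroup properly contained in $G$, and all such subgroups have $\underline{R} = 0$ in our example; hence the relation reduces to honest additivity of $n$. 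Evaluating at $a = b = 0$ gives $n(0) = 2 n(0)$ in $\ZZ$, forcing $n(0) = 0$. Together with $n(0) = 1$ this yields $0 = 1$ in $\ZZ$, the desired contradiction.

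The only non-routine input is the additivity relation for $n_H^G$, which is taken from Section~\ref{sec:tambara}; once that is in hand, the incompatibility of the multiplicative unit and the additive zero in a norm out of a zero ring is automatic. I anticipate no real obstacle beyond bookkeeping the Mackey structure; the slight conceptual cost is that the zero ring appears as a value of $\underline{R}$, but this is harmless and indeed is what makes the obstruction so sharp.
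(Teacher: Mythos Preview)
Your argument is correct. In fact it can be made even shorter: from the distributive law applied to $\emptyset \to G/H \to G/G$ one computes directly that $n_H^G(0)=0$ in any Tambara functor (the dependent product $\prod_{\emptyset \to G/H,\,G/H \to G/G}\emptyset$ is empty because $G/H \to G/G$ is surjective), so you do not even need to invoke the schematic additivity formula. Either way, the clash with $n_H^G(1)=1$ and $1=0$ in the zero ring does the job.

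This is a genuinely different route from the paper's proof. The paper builds a $C_p$-Green functor with $\underline{R}(C_p/e)=\ZZ/p\ZZ[x]$ and $\underline{R}(C_p/C_p)=\ZZ/p\ZZ$, inducts it up to $G$, and then uses the relation $r_e^{C_p} n_e^{C_p}(\bar{x}) = \prod_{g\in C_p} g\cdot \bar{x}$ together with the surjective counit $\Res_{C_p}^G \Ind_{C_p}^G \underline{R} \to \underline{R}$ to force $x^p$ into the image of a restriction that manifestly misses it. Your example is more elementary: it lives directly over $G$, needs no induction/restriction adjunction, and the obstruction is the bare incompatibility of a unital multiplicative map with an additive map out of the zero ring. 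What the paper's construction buys is an example with \emph{nonzero} values everywhere and injective restrictions, which is closer in spirit to Tambara functors one meets in practice; that same example is then reused in Propositions~\ref{prop:manytambaratogreen} and~\ref{prop:nosplitting} to produce Green functors with many Tambara structures and to rule out a functorial splitting. Your degenerate example would not serve those later purposes, but for the bare existence statement of Proposition~\ref{prop:greennottamb} it is the cleaner argument.
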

\begin{proof}
Suppose the statement is false. Let $p$ be a prime divisor of $|G|$ and let $C_p$ be a cyclic subgroup of order $p$. Define a commutative Green functor $\underline{R}$ over $C_p$ by
\begin{align*}
	\underline{R}(C_p/e) &= \ZZ/p\ZZ[x] \\
	\underline{R}(C_p/C_p) &= \ZZ/p\ZZ
\end{align*}
with obvious ring structures; we define the conjugations to be the identity on $\underline{R}(C_p/e)$, the restriction map $\underline{R}(C_p/C_p) \to \underline{R}(C_p/e)$ to be the obvious inclusion, and the transfer $\underline{R}(C_p/e) \to \underline{R}(C_p/C_p)$ to be zero. Now, since the restriction functor
\begin{align*}
	\Res_{C_p}^G : Mack(G) \to Mack(C_p)
\end{align*}
is symmetric monoidal, its right adjoint
\begin{align*}
	\Ind_{C_p}^G : Mack(C_p) \to Mack(G)
\end{align*}
is lax symmetric monoidal. Hence, $\Ind_{C_p}^G \underline{R}$ is a commutative Green functor. Then by our assumption, it can be given norm maps to make it a Tambara functor. Thus we may assume that $\Res_{C_p}^G \Ind_{C_p}^G \underline{R}$ is a Tambara functor and that the counit map
\begin{align*}
	\epsilon : \Res_{C_p}^G \Ind_{C_p}^G \underline{R} \to \underline{R}
\end{align*}
is a map of commutative Green functors. Recall that
\begin{align*}
	\Res_{C_p}^G \Ind_{C_p}^G \underline{R} (T) = \underline{R} (\Res_{C_p}^G (G \times_{C_p} T)),
\end{align*}
and that the map $\epsilon$ is induced by the inclusion
\begin{align*}
	T \to \Res_{C_p}^G (G \times_{C_p} T) \cong T \textstyle \coprod \big(\coprod_{G/C_p-C_p} T \big)
\end{align*}
corresponding to the identity coset; hence, $\epsilon$ is surjective. Choosing an element $\bar{x} \in \Res_{C_p}^G \Ind_{C_p}^G \underline{R} (C_p/e)$ such that $\epsilon (\bar{x}) = x$, we have
\begin{align*}
	r_e^{C_p} \epsilon (n_e^{C_p} \bar{x}) &= \epsilon (r_e^{C_p} n_e^{C_p} \bar{x}) \\
	&= \epsilon \big( \textstyle \prod_{g \in C_p} g \cdot \bar{x} \big) \\
	&= \textstyle \prod_{g \in C_p} g \cdot \epsilon(\bar{x}) \\
	&= x^p,
\end{align*}
but $x^p$ is not in the image of $r_e^{C_p}$, so we have a contradiction.
\end{proof}

\indent \emph{Remark:} Other examples of commutative Green functors which are not Tambara functors are given in~\cite{Mazur}.\\
\indent The following is immediate.

\begin{thm}\label{thm:nosymmonem}
If $G$ is nontrivial, then there is no lax symmetric monoidal construction of Eilenberg MacLane $G$-spectra.
\end{thm}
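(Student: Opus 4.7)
The plan is to derive Theorem~\ref{thm:nosymmonem} directly from Proposition~\ref{prop:greennottamb} together with the general principle that lax symmetric monoidal functors preserve commutative monoid objects. By a \emph{lax symmetric monoidal construction of Eilenberg MacLane $G$-spectra} I understand a lax symmetric monoidal functor
\[
    H : (Mack(G), \otimes) \to (Sp_G, \wedge)
\]
taking Eilenberg MacLane values, together with a natural isomorphism $\underline{\pi}_0 H\underline{M} \cong \underline{M}$.

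First I would observe that commutative monoids in $(Mack(G), \otimes)$ are precisely commutative Green functors, while commutative monoids in $(Sp_G, \wedge)$ are precisely the objects of $comm_G$. Since any lax symmetric monoidal functor preserves commutative monoid objects, $H$ would send each commutative Green functor $\underline{R}$ to a commutative ring orthogonal $G$-spectrum $H\underline{R}$. By the theorem of Brun/Strickland recalled in Section~\ref{sec:commspectra}, $\underline{\pi}_0 H\underline{R} \cong \underline{R}$ would then inherit a Tambara functor structure.

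Next I would check that the commutative Green functor structure underlying this Tambara structure agrees with the original one on $\underline{R}$. This is a short diagram chase using the lax monoidal structure map $H\underline{R} \wedge H\underline{R} \to H(\underline{R} \otimes \underline{R})$: on $\underline{\pi}_0$, under the Künneth-type identification $\underline{\pi}_0(H\underline{R} \wedge H\underline{R}) \cong \underline{R} \otimes \underline{R}$ for $(-1)$-connected spectra noted in the introduction, this map is the identity, so the induced multiplication on $\underline{\pi}_0 H\underline{R}$ is nothing other than the given multiplication on $\underline{R}$. In particular, the Tambara structure refines $\underline{R}$'s given Green functor structure.

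The conclusion is then immediate: every commutative Green functor would extend to a Tambara functor, contradicting Proposition~\ref{prop:greennottamb} whenever $G$ is nontrivial. There is no real obstacle here, only the mild bookkeeping of the compatibility check in the previous paragraph; that is why the theorem was stated as ``immediate.''
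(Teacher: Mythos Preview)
Your proposal is correct and follows essentially the same approach as the paper: both argue that a lax symmetric monoidal Eilenberg MacLane functor would send commutative Green functors to commutative ring spectra, whence $\underline{\pi}_0$ would endow every commutative Green functor with a Tambara structure, contradicting Proposition~\ref{prop:greennottamb}. Your extra paragraph verifying that the underlying Green structure on $\underline{\pi}_0 H\underline{R}$ agrees with the original one is a welcome bit of bookkeeping that the paper leaves implicit.
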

\begin{proof}
If there were a lax symmetric monoidal functor from $Mack(G)$ to any "good" category of $G$-spectra, then it would send commutative monoids (commutative Green functors) to commutative ring spectra. Applying the functor $\underline{\pi}_0$, this would imply that all commutative Green functors arise from Tambara functors, contradicting Proposition~\ref{prop:greennottamb}.
\end{proof}

\begin{prop}\label{prop:manytambaratogreen}
If $G$ is nontrivial then there exist commutative Green functors with arbitrarily many distinct Tambara functor structures.
\end{prop}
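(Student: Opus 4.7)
To sharpen Proposition~\ref{prop:greennottamb}, I would construct, for every $N \geq 1$, a single commutative Green functor over $G$ admitting at least $N$ pairwise non-isomorphic Tambara structures; in fact I will produce infinitely many at once. The strategy is: fix a prime $p$ dividing $|G|$ and a subgroup $C_p \leq G$, do the main work over $C_p$, and then induce the resulting family up to $G$.

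Over $C_p$, I would take the Green functor $\underline{R}$ with $\underline{R}(C_p/e) = \ZZ[x]$ (trivial action), $\underline{R}(C_p/C_p) = \ZZ[x][\epsilon]/(p\epsilon, \epsilon^2)$, restriction the quotient killing $\epsilon$, and transfer $t(f) = pf$. For each $\alpha \in \mathbb{F}_p[x]$, define a candidate norm
\begin{align*}
n_\alpha(f) \defeq f^p + \epsilon \cdot \delta_\alpha(f),
\end{align*}
where $\delta_\alpha \colon \ZZ[x] \to \mathbb{F}_p[x]$ is the unique derivation with $\delta_\alpha(x) = \alpha$. The plan is then to verify the Tambara axioms: multiplicativity follows from $\epsilon^2 = 0$ together with the Leibniz rule; the restriction compatibility $r(n_\alpha(f)) = f^p = \prod_{g \in C_p} g \cdot f$ is immediate; and the two-variable distributive law $n_\alpha(f+g) = n_\alpha(f) + n_\alpha(g) + t\bigl(((f+g)^p - f^p - g^p)/p\bigr)$ holds because $\delta_\alpha$ is additive. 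The higher exponential-diagram axioms over $C_p$ should reduce to this two-variable case by functoriality and multiplicativity of norms.

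Next I would show the $n_\alpha$ yield non-isomorphic Tambara structures on the common Green functor $\underline{R}$. For this I would classify the commutative Green automorphism group of $\underline{R}$, which is countable and built from $\operatorname{Aut}(\ZZ[x]) = \{x \mapsto \pm x + c\}$, the scalars $\mathbb{F}_p^\times$ acting on $\epsilon$, and a twisted-derivation correction. The intertwining equation $\phi \circ n_\alpha = n_\beta \circ \phi$, after noting that twisted-derivation terms at $f = x^p$ vanish mod $p$, forces $\beta$ to be related to $\alpha$ by a specific transformation in $\mathbb{F}_p[x]$ that essentially preserves the degree of $\alpha$; hence the structures $n_{x^n}$ for $n \geq 0$ are pairwise non-isomorphic. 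To pass from $C_p$ to a nontrivial $G$, I would apply $\Ind_{C_p}^G \colon Comm(C_p) \to Comm(G)$, which is lax symmetric monoidal (as the right adjoint of the symmetric monoidal $\Res_{C_p}^G$) and agrees on underlying Green functors with the right adjoint $\Ind_{C_p}^G \colon Tamb(C_p) \to Tamb(G)$ supplied by the remark after Corollary~\ref{cor:rescolimit}. Each $\Ind_{C_p}^G T_\alpha$ is then a Tambara structure on the fixed $G$-Green functor $\Ind_{C_p}^G \underline{R}$; pairwise non-isomorphism over $G$ follows because any $G$-level intertwining automorphism restricts and then projects, via the counit of the adjunction, to a $C_p$-level intertwining automorphism, contradicting the preceding analysis.

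The main technical obstacle is verifying all the higher exponential-diagram axioms (not merely the two-variable addition formula) for the candidate norms $n_\alpha$, which requires decomposing higher-arity distributive-law formulas over $C_p$ and checking compatibility with the specific Ansatz $n_\alpha(f) = f^p + \epsilon \cdot \delta_\alpha(f)$. A secondary point of care is the explicit description of the Green automorphism group of $\underline{R}$, to confirm that the degree invariant genuinely separates enough of the $\alpha$'s, together with the check that $\Ind_{C_p}^G$ transports distinct Tambara structures to distinct ones over $G$.
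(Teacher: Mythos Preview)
Your proposed norm $n_\alpha(f) = f^p + \epsilon\,\delta_\alpha(f)$ is \emph{not} multiplicative when $\delta_\alpha$ is the ordinary derivation. Compute directly in $\ZZ[x][\epsilon]/(p\epsilon,\epsilon^2)$:
\begin{align*}
n_\alpha(f)\,n_\alpha(g) &= f^p g^p + \epsilon\bigl(\bar f^{\,p}\,\delta_\alpha(g) + \bar g^{\,p}\,\delta_\alpha(f)\bigr),\\
n_\alpha(fg) &= (fg)^p + \epsilon\bigl(\bar f\,\delta_\alpha(g) + \bar g\,\delta_\alpha(f)\bigr),
\end{align*}
and $\bar f \neq \bar f^{\,p}$ in $\mathbb F_p[x]$ as soon as $f$ is nonconstant (take $f=g=x$, $\alpha=1$, $p$ odd). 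So ``multiplicativity follows from $\epsilon^2=0$ together with the Leibniz rule'' is false; what you would actually need is a \emph{Frobenius-twisted} Leibniz rule $\delta(fg)=\bar f^{\,p}\delta(g)+\bar g^{\,p}\delta(f)$, i.e.\ $\delta_\alpha(f)=\alpha\cdot\overline{f'(x)}^{\,p}$. That is fixable, but it is not what you wrote, and it changes the later automorphism analysis. There is a second gap in your passage to $G$: from a $G$-Tambara isomorphism $\Ind T_\alpha \cong \Ind T_\beta$ you only get a map $\Res\Ind\underline R \to \underline R$ after composing with the counit, not an automorphism of $\underline R$; so your ``restrict then project via the counit'' step does not produce a $C_p$-level intertwiner as claimed.

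By contrast, the paper's argument is much lighter and avoids both issues. It works entirely in characteristic $p$ with transfer equal to zero: take $\underline R(C_p/e)=\mathbb F_p[x]$ and $\underline R(C_p/C_p)=\mathbb F_p[x_s:s\in S]$ with restriction $x_s\mapsto x$. A Tambara structure is then just a ring map $n\colon \mathbb F_p[x]\to\mathbb F_p[x_s]$ with $r\circ n$ equal to Frobenius, so one may set $n(x)=m$ for any degree-$p$ monomial $m$; there is nothing to verify beyond $r(m)=x^p$, and no derivations or square-zero extensions enter. Moreover the paper only proves the Tambara structures are \emph{distinct} (different norm maps on the same Green functor), not pairwise non-isomorphic, which is all the proposition asserts. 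Distinctness over $G$ is then immediate: restrict to $C_p$ and push through the surjective Tambara counit $\epsilon\colon \Res\Ind\underline R\to\underline R$ to see $\epsilon(n_e^{C_p}\bar x)=m$, so different $m$'s force different norms already on $\Res\Ind\underline R$. You are working much harder than necessary, and the hard part (multiplicativity) is where your argument breaks.
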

\begin{proof}
Let $p$ be a prime divisor of $|G|$ and let $C_p$ be a cyclic subgroup of order $p$. Let $S$ be a set of arbitrary cardinality. Define a commutative Green functor $\underline{R}$ over $C_p$ by
\begin{align*}
	\underline{R}(C_p/e) &= \ZZ/p\ZZ[x] \\
	\underline{R}(C_p/C_p) &= \ZZ/p\ZZ[x_s : s \in S]
\end{align*}
with obvious polynomial ring structures; we define the conjugations to be the identity on $\underline{R}(C_p/e)$, the restriction map $\underline{R}(C_p/C_p) \to \underline{R}(C_p/e)$ to be the map sending all $x_s$ to $x$, and the transfer $\underline{R}(C_p/e) \to \underline{R}(C_p/C_p)$ to be zero. Then Tambara functor structures on $\underline{R}$ correspond to ring homomorphisms
\begin{align*}
	n_e^{C_p} : \underline{R}(C_p/e) \to \underline{R}(C_p/C_p)
\end{align*}
such that the composite $r_e^{C_p} n_e^{C_p}$ is the Frobenius $x \mapsto x^p$. Letting $m$ denote a monomial in the $x_s$'s of degree $p$, we denote by $\underline{R}_m$ the Tambara functor such that $n_e^{C_p} (x) = m$. We claim that the Tambara functors $\Ind_{C_p}^G \underline{R}_m$ are all distinct. It suffices to show that their restrictions $\Res_{C_p}^G \Ind_{C_p}^G \underline{R}_m$ are distinct. Recall from the proof of Proposition~\ref{prop:greennottamb} that the counit map
\begin{align*}
	\epsilon_m : \Res_{C_p}^G \Ind_{C_p}^G \underline{R}_m \to \underline{R}_m
\end{align*}
is surjective. It is also a map of Tambara functors for each $m$. Let $\bar{x}$ be an element of $\Res_{C_p}^G \Ind_{C_p}^G \underline{R} (G/e)$ such that $\epsilon (\bar{x}) = x$. Then regarding $\bar{x}$ as an element in $\Res_{C_p}^G \Ind_{C_p}^G \underline{R}_m (C_p/e)$ we have
\begin{align*}
	\epsilon_m (n_e^{C_p} \bar{x}) = n_e^{C_p} (\epsilon_m \bar{x}) = n_e^{C_p} x = m,
\end{align*}
so we get a different value of $n_e^{C_p} \bar{x}$ for each $m$.
\end{proof}

Next, let $F$ be the forgetful functor from $Tamb(G)$ to $Comm(G)$, and let $Im(F)$ denote the full subcategory of objects in the image of $F$. We have the following negative result.

\begin{prop}\label{prop:nosplitting}
If $G$ is nontrivial then there does not exist a functor
\begin{align*}
	s : Im(F) \to Tamb(G)
\end{align*}
such that the composite $F \circ s$ is naturally isomorphic to the identity.
\end{prop}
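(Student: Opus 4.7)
The strategy is to exhibit, for each nontrivial $G$, a commutative Green functor $\underline{R} \in Im(F)$ together with a $G$-Green automorphism $\phi$ of $\underline{R}$ such that no Tambara structure on $\underline{R}$ makes $\phi$ into a Tambara morphism. This suffices: if $s$ existed, then $s(\phi)$ would be a Tambara automorphism of $s(\underline{R})$ whose underlying Green-functor morphism is $\phi$, forcing $\phi$ to be a Tambara automorphism of the specific Tambara structure $s(\underline{R})$, a contradiction.

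Explicitly, choose a prime $p$ dividing $|G|$ and a subgroup $C_p \subseteq G$, and let $\underline{R}_0$ be the $C_p$-Green functor from the proof of Proposition~\ref{prop:manytambaratogreen} with indexing set $S = \ZZ$. The shift $\sigma : n \mapsto n+1$ induces a $C_p$-Green automorphism $\tilde{\sigma}$ of $\underline{R}_0$ which is the identity on $\underline{R}_0(C_p/e) = \ZZ/p\ZZ[x]$ and permutes the generators on $\underline{R}_0(C_p/C_p) = \ZZ/p\ZZ[x_n : n \in \ZZ]$ by $x_n \mapsto x_{n+1}$. Set $\underline{R} := \Ind_{C_p}^G \underline{R}_0$ and $\phi := \Ind_{C_p}^G \tilde{\sigma}$. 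Since $\underline{R}_0$ already admits Tambara structures and $\Ind_{C_p}^G$ is lax symmetric monoidal (as used in Proposition~\ref{prop:greennottamb}), $\underline{R}$ lies in $Im(F)$.

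Now suppose, toward contradiction, that some $G$-Tambara structure $\underline{T}_G$ on $\underline{R}$ makes $\phi$ a Tambara automorphism. Restricting to $C_p$ yields a $C_p$-Tambara structure $\underline{T} := \Res_{C_p}^G \underline{T}_G$ on $\Res_{C_p}^G \Ind_{C_p}^G \underline{R}_0$ for which $\Res_{C_p}^G \phi$ is a Tambara automorphism. The key observation is that the $G$-set $G/e$ restricts to a disjoint union of free $C_p$-orbits, so $\underline{T}(C_p/e) = \underline{R}_0(C_p/e)^{|G|/p}$, and since $\tilde{\sigma}$ is trivial on $\underline{R}_0(C_p/e)$, the map $(\Res_{C_p}^G \phi)_{C_p/e}$ is the identity. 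Choose any lift $\tilde{x} \in \underline{T}(C_p/e)$ of $x \in \underline{R}_0(C_p/e)$ along the surjective Green-functor counit $\epsilon : \Res_{C_p}^G \Ind_{C_p}^G \underline{R}_0 \to \underline{R}_0$. Then $\tilde{x}$, and hence $n_e^{C_p}(\tilde{x})$ (since $\Res_{C_p}^G \phi$ commutes with norms in $\underline{T}$), is fixed by $\Res_{C_p}^G \phi$. Applying $\epsilon$ and using its naturality gives $\tilde{\sigma}(P) = P$ for $P := \epsilon(n_e^{C_p}(\tilde{x})) \in \ZZ/p\ZZ[x_n : n \in \ZZ]$, while the Frobenius identity $r_e^{C_p} n_e^{C_p}(\tilde{x}) = \prod_{g \in C_p} g \cdot \tilde{x}$ (exactly as used in Proposition~\ref{prop:greennottamb}), combined with triviality of the $C_p$-action on $\underline{R}_0(C_p/e)$, forces $r_e^{C_p}(P) = x^p$. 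But every $\tilde{\sigma}$-invariant element of $\ZZ/p\ZZ[x_n : n \in \ZZ]$ is a constant in $\ZZ/p\ZZ$, since any polynomial involves only finitely many variables while $\sigma$ has no finite orbits on $\ZZ$; hence $r_e^{C_p}(P) = P$ is constant, contradicting $r_e^{C_p}(P) = x^p$. The main technical points to verify are the Green-functor naturality of $\epsilon$ and the triviality of $\Res_{C_p}^G \phi$ at level $C_p/e$, both of which are routine from the Mackey-functor formulas for induction already implicit in Proposition~\ref{prop:greennottamb}.
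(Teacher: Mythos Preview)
Your proof is correct and takes a genuinely different route from the paper's.

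The paper constructs an auxiliary $C_p$-Green functor $\underline{B}$ (with $\underline{B}(C_p/e)=\ZZ/p\ZZ[x]$ and $\underline{B}(C_p/C_p)=\ZZ/p\ZZ[x^p]$) whose Tambara structure is unique because its restrictions are injective, together with an infinite family of Green maps $f_m:\underline{B}\to\underline{R}$ indexed by degree-$p$ monomials $m$.  Inducing to $G$, the hypothetical splitting $s$ would force all the $s(\Ind_{C_p}^G f_m)$ to be Tambara maps from the unique Tambara structure on $\Ind_{C_p}^G\underline{B}$ into a \emph{single} Tambara structure on $\Ind_{C_p}^G\underline{R}$; a norm computation then shows that the $(C_p/C_p)$-component of $f_m$ is determined by its $(C_p/e)$-component, contradicting the dependence on $m$.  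Your argument instead exploits a single Green \emph{automorphism} (the index shift) that cannot be a Tambara automorphism for any Tambara structure, the obstruction being a short fixed-point calculation in a polynomial ring.  This is tighter---one object in $Im(F)$ and one morphism rather than two objects and an infinite family---while resting on the same two inputs (surjectivity and naturality of the counit $\epsilon$, and the identity $r_e^{C_p}n_e^{C_p}=\prod_{g\in C_p}c_g$).  The paper's approach, on the other hand, avoids any invariant-theory step by front-loading the uniqueness of the Tambara structure on the source.

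One small correction: your justification that $\underline{R}\in Im(F)$ via ``$\Ind_{C_p}^G$ is lax symmetric monoidal'' is not enough, since lax monoidality only transports commutative Green structures, not norms.  The needed fact is that $\Ind_{C_p}^G$ restricts to a functor $Tamb(C_p)\to Tamb(G)$ (the right adjoint to $\Res_{C_p}^G$, as noted in the remark following Corollary~\ref{cor:rescolimit}); applying it to any of the Tambara structures on $\underline{R}_0$ from Proposition~\ref{prop:manytambaratogreen} yields a Tambara structure on $\underline{R}$ with the correct underlying Green functor.
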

\begin{proof}
Let $p$ be a prime divisor of $|G|$ and let $C_p$ be a cyclic subgroup of order $p$. Let $S$ be an infinite set. Let $\underline{R}$ be as in the proof of Proposition~\ref{prop:manytambaratogreen}. Now define a Tambara functor $\underline{B}$ over $C_p$ by
\begin{align*}
	\underline{B}(C_p/e) &= \ZZ/p\ZZ[x] \\
	\underline{B}(C_p/C_p) &= \ZZ/p\ZZ[x^p]
\end{align*}
with obvious polynomial ring structures; we define the conjugations to be the identity on $\underline{B}(C_p/e)$, the restriction map $\underline{B}(C_p/C_p) \to \underline{B}(C_p/e)$ to be the map sending $x^p$ to $x^p$, the transfer $\underline{B}(C_p/e) \to \underline{B}(C_p/C_p)$ to be zero and the norm $n_e^{C_p}$ to be the map sending $x$ to $x^p$. Note that this is the \emph{unique} Tambara functor structure on $\underline{B}$ with the same product. Next, for any monomial $m$ in the $x_s$'s of degree $p$, define a map of commutative Green functors $f_m : \underline{B} \to \underline{R}$ by $f_m (x) = x$ in the $C_p/e$ component and $f_m (x^p) = m$ in the $C_p/C_p$ component. Now, suppose that there exists a splitting functor $s$; we may assume that $F \circ s$ is precisely the identity functor. Since the restrictions of $\underline{B}$ are injective, the same is true of $\Ind_{C_p}^G \underline{B}$; hence, the commutative Green functor $\Ind_{C_p}^G \underline{B}$ also has a unique Tambara functor structure. Now we have natural isomorphisms as below,
\begin{align*}
	\Ind_{C_p}^G \underline{M} (G/e) &\cong \bigtimes_{G/C_p} \underline{M} (C_p/e) \\
	\Ind_{C_p}^G \underline{M} (G/G) &\cong \underline{M} (C_p/C_p)
\end{align*}
so by applying our splitting functor $s$, we obtain commutative diagrams
\begin{align*}
\xymatrix{
 \bigtimes_{G/C_p} \underline{B} (C_p/e) \ar[d]_-{\bigtimes_{G/C_p} f_m} \ar[rr]^-{\bigtimes_{C/C_p} n_e^{C_p}} & & \underline{B} (C_p/C_p) \ar[d]^-{f_m} \\
 \bigtimes_{G/C_p} \underline{R} (C_p/e) \ar[rr]_-{n_e^G} & & \underline{R} (C_p/C_p) }
\end{align*}
for all $m$ and a fixed Tambara functor structure on $\Ind_{C_p}^G \underline{R}$. Since the top horizontal map is surjective, this implies that $f_m$ is determined by the $C_p/e$ component; a contradiction.
\end{proof}

The following is immediate.

\begin{thm}\label{thm:nosplittingspectra}
If $G$ is nontrivial then there does not exist a functor
\begin{align*}
	s : Im(F) \to Ho(comm_G)
\end{align*}
such that the composite $\underline{\pi}_0 \circ s$ is naturally isomorphic to the identity.
\end{thm}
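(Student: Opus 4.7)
The plan is to reduce immediately to Proposition~\ref{prop:nosplitting} by composing any hypothetical homotopical splitting with $\underline{\pi}_0$. First, note that the functor $\underline{\pi}_0 : comm_G \to Tamb(G)$ sends weak equivalences of commutative ring spectra to isomorphisms of Tambara functors (since the underlying map of spectra induces an isomorphism on $\underline{\pi}_0$ as Mackey functors, and Tambara functor structure is determined by its underlying Mackey functor structure plus the homotopical operations), so it descends to a well-defined functor $\underline{\pi}_0 : Ho(comm_G) \to Tamb(G)$.

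Suppose toward contradiction that a functor $s : Im(F) \to Ho(comm_G)$ exists with $\underline{\pi}_0 \circ s \cong Id_{Im(F)}$ (meaning that for each $\underline{G} \in Im(F)$ there is a natural isomorphism $F(\underline{\pi}_0(s(\underline{G}))) \cong \underline{G}$ of commutative Green functors, since $\underline{\pi}_0$ naturally lands in $Tamb(G)$ whose underlying commutative Green functor must match). Define
\begin{align*}
    s' \defeq \underline{\pi}_0 \circ s : Im(F) \to Tamb(G).
\end{align*}
This is a functor, and the hypothesis gives us a natural isomorphism $F \circ s' \cong Id_{Im(F)}$.

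This is precisely the data ruled out by Proposition~\ref{prop:nosplitting}, so we obtain the desired contradiction. There is no real obstacle here; the entire content has already been packaged in the purely algebraic Proposition~\ref{prop:nosplitting}, and the only thing to be careful about is the bookkeeping between the Tambara functor values of $\underline{\pi}_0$ and the commutative Green functor category $Im(F)$ in which the natural isomorphism is asserted to live. Once that identification is made explicit, the theorem follows formally.
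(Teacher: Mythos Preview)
Your proof is correct and matches the paper's approach exactly: the paper simply states that the theorem is immediate, and the only content is composing a hypothetical $s$ with $\underline{\pi}_0$ to obtain a splitting $s' : Im(F) \to Tamb(G)$ contradicting Proposition~\ref{prop:nosplitting}.
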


Thus, not all commutative Green functors arise from Tambara functors, those that do may arise from arbitrarily many distinct Tambara functors, and there is not even a natural way of choosing norm maps for these.

\vspace{2cm}

\bibliographystyle{alphanum}
\bibliography{tambarabiblio}

\end{document}